\newcommand{\B}[1]{\overline{#1}}
\newcommand{\C}{\mathbb{C}}
\newcommand{\R}{\mathbb{R}}
\newcommand{\rb}{\rangle}
\newcommand{\lb}{\langle}
\newcommand{\f}[1]{\mathfrak{#1}}
\newcommand{\bgm}[2]{\lb#1,#2\rb}
\newcommand{\vcla}{\mathcal{C}}
\newcommand{\HCFp}{\operatorname{HCF}_+}
\newcommand{\id}{\operatorname{Id}}
\newcommand{\tr}{\operatorname{tr}}
\newtheorem{thm}{Theorem}[section]
\newtheorem{defn}[thm]{Definition}
\newtheorem{prop}[thm]{Proposition}
\newtheorem{lem}[thm]{Lemma}
\newtheorem{cor}[thm]{Corollary}
\theoremstyle{remark}
\newtheorem{rmk}[thm]{Remark}
\title[$\HCFp$ on nilpotent and almost-abelian complex Lie groups]{Positive Hermitian Curvature Flow on nilpotent and almost-abelian complex Lie groups}
\author{James Stanfield}
\begin{document}
	\addtocontents{toc}{\protect\setcounter{tocdepth}{2}}
		\begin{abstract}
		We study the positive Hermitian curvature flow on the space of left-invariant metrics on complex Lie groups. We show that in the nilpotent case, the flow exists for all  positive times and subconverges in the Cheeger-Gromov sense to a soliton. We also show convergence to a soliton when the complex Lie group is almost abelian. That is, when its Lie algebra admits a (complex) co-dimension one abelian ideal. Finally, we study solitons in the almost-abelian setting. We prove uniqueness and completely classify all left-invariant, almost-abelian solitons, giving a method to construct examples in arbitrary dimensions, many of which admit co-compact lattices.
	\end{abstract}
	\maketitle

	\tableofcontents
	\section{Introduction}
	Motivated by the need to find a Ricci flow analogue in non-K\"ahler Hermitian geometry, in \cite{StreetsAndTianHCF2011}, Streets and Tian introduced a family of geometric flows generalising the K\"ahler-Ricci flow to Hermitian manifolds. These flows, called \emph{Hermitian Curvature Flows} (HCFs), evolve an initial Hermitian metric in the direction of a Ricci-type tensor of the Chern connection modified with some lower order torsion terms. Various members of this family are actively being studied (see \cite[\S 3.1.3]{ustinovskiyThesis} and references therein).
	 
	We will focus on a distinguished member of the HCF family that was first introduced by Ustinovskiy in \cite{UstPosHCF2019}. More precisely, let $(M,J,g)$ be a Hermitian manifold. Let $\nabla$ denote the Chern connection, $\Omega$ the corresponding Chern curvature tensor, and $T$ its torsion. The \emph{positive Hermitian Curvature flow} ($\HCFp$) is defined by the following evolution equation:
	\begin{equation}
	\label{eqn:HCF}
	\partial_tg_t = -\Theta(g_t), \qquad g_0 = g.
	\end{equation}
	Here $\Theta$ is the \emph{torsion-twisted Chern-Ricci tensor} defined as
	\[
	\Theta(g) := S(g) + Q(g),
	\]
	where $S(g)$ is the second Chern-Ricci tensor
	\[
	S(g)_{i\B{j}} := g^{k\B l}\Omega_{k\B l i \B j},
	\]
	and $Q(g)$ is a quadratic term in $T$ given by
	\[
	Q(g)_{i\B j} :=\frac12 g^{m\B n}g^{p \B s}T_{pm \B j} T_{\B s\B n i}.
	\]
	Other members of the HCF family come from different choices for the tensor $Q$ which are quadratic in the torsion. In this instance, $Q$ is chosen so that the flow preserves certain curvature positivity conditions (see \cite{UstPosHCF2019,Ustinovskiy2020} for details), generalising this property of the (K\"ahler)-Ricci flow, hence the name \emph{positive} HCF.
 
	In this article, we consider the case where $(M,J) = (G,J)$ is a simply-connected complex Lie group, and $g$ is a left-invariant Hermitian metric. Biholomorphism invariance of $\Theta$ then implies that a left-invariant solution to (\ref{eqn:HCF}) is determined by an ODE on the Lie algebra of $G$. Thus, short-time existence and uniqueness of said solution are guaranteed by standard ODE theory. We will only refer to this left-invariant solution from now.	
	
	Our main result concerns the limiting behaviour of the $\HCFp$ (\ref{eqn:HCF}) on nilpotent Lie groups. Long-time existence follows from \cite[Theorem 6.3]{UstHCFHom2017}. Adding to this, we describe the precise asymptotic behaviour of appropriately normalised solutions.
	\begin{thm}
	\label{thm:nilpConvThm}
	Let $(G,J,g)$ be a simply-connected, complex nilpotent Lie group with left-invariant Hermitian metric $g$. Suppose $g_t$ is the solution to (\ref{eqn:HCF}) with $g_0 = g$. Then $g_t$ exists for all positive times and the rescaled Hermitian manifolds $(G,J,(1+t_k)^{-1}g_{t_k})$ sub-converge in the Cheeger-Gromov topology as $t\to\infty$ to an $\HCFp$ soliton $(G_\infty,J_\infty,g_\infty)$.
	\end{thm}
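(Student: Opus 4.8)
The plan is to recast (\ref{eqn:HCF}) as Lauret's bracket flow and study the resulting ODE on the variety of nilpotent complex Lie brackets. First I would fix the complex vector space $(\f n,J):=(\operatorname{Lie}G,J)$ with the inner product $\bgm{\cdot}{\cdot}:=g_0$, and set $V:=\Lambda^2\f n^*\otimes\f n$ with its induced inner product; the $\C$-bilinear nilpotent Lie brackets form a closed $\operatorname{GL}(\f n,J)$-invariant subvariety $\vcla\subset V$. On a complex Lie group the Chern connection of any left-invariant metric is the flat canonical connection (the one rendering left-invariant fields parallel), so $S\equiv0$, $\Theta=Q$, and $\Theta$ is biholomorphism-invariant and scale-invariant. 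The standard gauging argument then identifies $g_t$, for each $t$, with the fixed metric $\bgm{\cdot}{\cdot}$ on the complex Lie group $G_{\mu_t}:=(\f n,J,\mu_t)$, where $\mu_t\in\vcla$ solves the bracket flow $\dot\mu_t=-\pi(\Theta_{\mu_t})\mu_t$, $\mu_0=[\cdot,\cdot]$; here $\Theta_\mu$ is the self-adjoint operator on $(\f n,\bgm{\cdot}{\cdot})$ associated to $\Theta$, and $\pi$ is the natural action of $\f{gl}(\f n,J)$ on $V$, normalised so that $\pi(A)\mu=0$ precisely when $A$ is a derivation of $\mu$. Long-time existence of $\mu_t$ is supplied by \cite[Theorem~6.3]{UstHCFHom2017}. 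Since $\Theta_\mu$ is homogeneous of degree two in $\mu$, the rescaled curve $\tilde\mu_s:=(1+t)^{1/2}\mu_t$, $s:=\log(1+t)$, solves an autonomous normalised bracket flow, and $(\f n,J,\tilde\mu_s,\bgm{\cdot}{\cdot})$ is biholomorphic-isometric to $(G,J,(1+t)^{-1}g_t)$. Its rest points $\mu_\infty$ are exactly the brackets with $\Theta_{\mu_\infty}=\lambda\,\id+D$ for some $\lambda\in\R$ and some derivation $D$ of $\mu_\infty$ commuting with $J$ — the algebraic $\HCFp$ solitons — and integrating $D$ to a one-parameter group of $\C$-linear automorphisms shows each such underlies a genuine self-similar solution of (\ref{eqn:HCF}).

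The heart of the proof, and the step I expect to be the main obstacle, is the two-sided a priori estimate
\[
\|\mu_t\|^2\;\asymp\;(1+t)^{-1},\qquad t\ge0,
\]
which says precisely that $\{\tilde\mu_s:s\ge0\}$ is contained in a compact subset of $\vcla$ not meeting the abelian bracket $0$. Along the flow $\tfrac{d}{dt}\|\mu_t\|^2=-2\bgm{\pi(\Theta_{\mu_t})\mu_t}{\mu_t}$, and since $\mu\mapsto\bgm{\pi(\Theta_\mu)\mu}{\mu}$ is a homogeneous polynomial of degree four (recall $\Theta_\mu=Q_\mu$ is quadratic in the structure constants), the estimate reduces, by elementary ODE comparison, to a two-sided bound $c\|\mu\|^4\le\bgm{\pi(\Theta_\mu)\mu}{\mu}\le C\|\mu\|^4$ valid on $\vcla$. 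The upper bound on $\tfrac{d}{dt}\|\mu_t\|^2$, i.e.\ $\bgm{\pi(\Theta_\mu)\mu}{\mu}\ge c\|\mu\|^4$, is the substantive point and is where nilpotency enters: by compactness of $\vcla\cap\{\|\mu\|=1\}$ it is equivalent to strict positivity of the Dirichlet-type quantity $\bgm{\pi(\Theta_\mu)\mu}{\mu}$ at each nonzero nilpotent bracket, which reflects the moment-map nature of $\Theta$ — the very reason $\HCFp$ preserves positivity — together with the fact that the $\operatorname{GL}(\f n,J)$-orbit through any nonzero nilpotent bracket is never closed. The matching upper bound $\bgm{\pi(\Theta_\mu)\mu}{\mu}\le C\|\mu\|^4$ is immediate from polynomiality. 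Integrating then gives $\|\mu_t\|^2\asymp(1+t)^{-1}$; in particular every subsequential limit of $\tilde\mu_s$ is a nonzero (hence non-abelian) nilpotent bracket, possibly \emph{not} isomorphic to $[\cdot,\cdot]$ but a degeneration of it — which is why the limit group $G_\infty$ need not be $G$.

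Finally, to extract a subsequence converging to a rest point, I would invoke the gradient structure of the normalised bracket flow (as in Lauret's treatment of homogeneous Ricci flow): there is a scale-invariant functional $F$ on $V\setminus\{0\}$ whose critical points in $\vcla$ are exactly the algebraic $\HCFp$ solitons and which satisfies $\tfrac{d}{ds}F(\tilde\mu_s)\le-\kappa_0\|\dot{\tilde\mu}_s\|^2$ for some $\kappa_0>0$. Since $F$ is continuous and $\{\tilde\mu_s\}$ is precompact, $F(\tilde\mu_s)$ is bounded; being monotone it converges, so $\int_0^\infty\|\dot{\tilde\mu}_s\|^2\,ds<\infty$, and hence $\|\dot{\tilde\mu}_{s_k}\|\to0$ along some $s_k\to\infty$. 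Passing to a further subsequence, $\tilde\mu_{s_k}\to\mu_\infty\in\vcla$, a rest point of the normalised bracket flow, i.e.\ an algebraic $\HCFp$ soliton; set $(G_\infty,J_\infty,g_\infty):=(\f n,J,\mu_\infty,\bgm{\cdot}{\cdot})$, which carries an honest $\HCFp$ soliton structure by the first paragraph. Since convergence of structure constants $\tilde\mu_{s_k}\to\mu_\infty$ (with metric and base point fixed) upgrades to pointed — hence Cheeger--Gromov — convergence of the associated simply-connected homogeneous Hermitian manifolds, and $(\f n,J,\tilde\mu_{s_k},\bgm{\cdot}{\cdot})$ is biholomorphic-isometric to $(G,J,(1+t_k)^{-1}g_{t_k})$ with $t_k:=e^{s_k}-1\to\infty$, this is exactly the claimed sub-convergence.
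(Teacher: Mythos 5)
Your overall framework (bracket flow, two-sided decay estimate, pre-compactness of the normalised orbit, extraction of a limit that is an algebraic soliton) matches the paper's strategy in outline, but the two steps you yourself flag as the substance of the argument are exactly where the proposal breaks down, and in both cases the mechanism you invoke is one the paper explicitly says is unavailable for $\HCFp$.

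First, the lower bound $\bgm{\pi(\Theta_\mu)\mu}{\mu}\ge c\|\mu\|^4$ on the nilpotent variety. You justify the pointwise positivity by ``the moment-map nature of $\Theta$'' together with non-closedness of $\operatorname{GL}(\f g,J)$-orbits of nonzero nilpotent brackets. But $\Theta_\mu = P_\mu = \mu\mu^*$ is \emph{not} the moment map of the change-of-basis action: the moment map identity here reads $\tr((P_\mu-Q_\mu)E)=\bgm{\pi(E)\mu}{\mu}$ (Lemma \ref{lem:GITcondn} of the paper), so $\bgm{\pi(P_\mu)\mu}{\mu}=\|P_\mu\|^2-\tr(Q_\mu P_\mu)$ with $Q_\mu\ge 0$, and the GIT argument that works for the Ricci flow (where the driving tensor \emph{is} the moment map) does not transfer. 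The paper never proves a uniform variety-wide inequality of this kind; instead the upper bound $\|\mu_t\|^2\le Ct^{-1}$ of Theorem \ref{thm:BFNilpGaugedGrowth} is obtained by induction on the nilpotency step, using the gauged bracket flow $\dot\nu=-\pi(P_\nu-S_\nu)\nu$ with $S_\nu=\nu_1\nu_0^*-\nu_0\nu_1^*$ chosen so that the centre is preserved, splitting $\nu=\nu_0+\nu_1$ along $\f z\oplus\f z^\perp$, applying the inductive hypothesis to $\nu_1$ (which solves the ungauged flow for a bracket of one lower step), and then an ODE comparison for $\|\nu_0\|^2$ (Lemma \ref{lem:gaugedBFAsymptotics}).

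Second, and more seriously, your convergence step rests on ``the gradient structure of the normalised bracket flow \ldots a scale-invariant functional $F$ whose critical points are exactly the algebraic solitons and which satisfies $\frac{d}{ds}F(\tilde\mu_s)\le-\kappa_0\|\dot{\tilde\mu}_s\|^2$.'' The introduction of the paper states precisely that, except on $2$-step nilpotent groups, the $\HCFp$ bracket flow is \emph{not} a gradient flow of such a functional, and that this is the reason the theorem is delicate. No such $F$ is exhibited, and asserting its existence is the missing idea, not a routine citation. The paper's substitute is again inductive: assuming subconvergence of $\eta_1$ (the quotient flow) to a fixed point, it constructs the ad hoc quantity $\phi$ of (\ref{eqn:nilpMonotone}), proves it is monotone \emph{only on solutions with $\dot\eta_1=0$} (Lemma \ref{lem:monotoneQuantity}), and then uses an $\omega$-limit argument (Corollary \ref{cor:SubConvInductionStep}) to upgrade this to subconvergence of the full gauged flow; finally Theorem \ref{thm:nilpSolisAlg} (nilpotent semi-algebraic solitons are algebraic, itself proved by induction) is needed to identify the limit as a fixed point of the ungauged normalised flow. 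Until you either produce the functional $F$ with the stated properties or replace that paragraph with an argument of this inductive type, the proof is incomplete at its central step.
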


	An \emph{$\HCFp$-soliton} is a Hermitian manifold with corresponding solution to (\ref{eqn:HCF}) that is self-similar. More precisely, the solution evolves only by scaling and pull-back by time-dependent biholomorphisms. We call a soliton shrinking, steady or expanding, if the scaling factor is decreasing, constant, or increasing respectively. In general, the limit $(G_\infty,J_\infty,g_\infty)$ will be non-flat (i.e. $\Theta(g_\infty)\neq 0$),  simply-connected and nilpotent but could be non-isomorphic to $G$. We say a sequence $\{(G,J,g_l)\}_{l=1}^\infty$ of Hermitian manifolds converges to $(\bar G, \bar J, \bar g)$ in the Cheeger-Gromov topology if there exist biholomorphisms $\phi_l \colon U_l \subset (\bar G, \bar J) \to \phi_l(U_l)\subset (G,J)$ mapping the identity of $\bar G$ to the identity of $G$ such that the family of open sets $\{U_l\}_{l=1}^\infty$ exhaust $G$ and $\phi_l^* g_{l} \to \bar g$ as $l\to \infty$ smoothly and uniformly over compact subsets.
	
	Theorem \ref{thm:nilpConvThm} was previously known for $2$-step nilpotent $G$ \cite[Theorem A]{pujia2020positive}. In fact this forms the base case in our proof which is inductive. More precisely, suppose $g_t$ evolves under the $\HCFp$ on the complex $k$-step nilpotent Lie group $G$, then if $Z$ is the centre of $G$, it follows from \cite[Theorem 5.1]{UstHCFHom2017} that the induced, left-invariant Hermitian metrics on $G/Z$ also evolve under the $\HCFp$. The quotient $G/Z$ is a $(k-1)$-step nilpotent Lie group, and so the $\HCFp$ seems well suited to being studied by induction on $k$.
	
	We employ Lauret's \emph{bracket flow} technique (see \cite{LauretBracketFlowRef2016}) in the proof of Theorem \ref{thm:nilpConvThm}. Instead of studying the $\HCFp$ directly, we investigate an equivalent ODE on the space of complex Lie brackets, which can be viewed as an algebraic variety in the space $\f g \otimes \Lambda^2 \f g^*$. Here, $\f g$ is the Lie algebra of $G$. The bracket flow isn't well-adapted to the aforementioned induction scheme, as the centre of the evolving brackets is not preserved. To overcome this, we follow the ideas in \cite[\S 3]{bohmLafImmRicFl} and \cite[\S 2]{arrLafHomPlu19}, studying the \emph{gauged bracket flow}. This is again equivalent to the $\HCFp$, but now preserves the centre of the evolving brackets. The key technical ingredient is then an appropraite monotone quantity, stationary on solitons, which can be applied inductively.
	
	Other similar results on the long-time behaviour of geometric flows (e.g. \cite{lauretHomNil01},\cite{HCFUni2019},\cite{arrLafHomPlu19},\cite{pujia2020positive},\cite{bohmLafImmRicFl}) also use the bracket flow. Here, the key steps in the analysis all involve viewing the bracket flow as the gradient flow of a certain functional coming from real geometric invariant theory. Except on $2$-step nilpotent Lie groups, the $\HCFp$ does not enjoy this property and so Theorem \ref{thm:nilpConvThm} is more elusive, requiring the induction program mentioned above.
	
	It is worth highlighting a key result on solitons needed to prove Theorem \ref{thm:nilpConvThm}. Namely, Theorem \ref{thm:nilpSolisAlg} where we show that the torsion-twisted Chern-Ricci tensor on any complex, nilpotent, left-invariant $\HCFp$ soliton $(G,g)$ satisfies $\Theta(g) = \lambda g + g(D\cdot,\cdot)$, where $\lambda \in \R$ and $D$ is a $g$-self-adjoint derivation of $\f g := \operatorname{Lie}(G)$ (cf. algebraic Ricci solitons \cite{jabRicSolAlg14}). This is a key step in the problem of existence and uniqueness of invariant nilpotent solitons.

	It is natural to ask what happens in the solvable case. To this end, we describe the long-time behaviour in a class of 2-step solvable Lie groups. Specifically, \emph{almost-abelian} complex Lie groups. We say a complex Lie group $(G,J)$ is almost-abelian if its Lie algebra admits a (complex) co-dimension one abelian ideal.
	\begin{thm}
		\label{thm:aaConvThm}
	Let $(G,J,g)$ be a simply-connected almost-abelian complex Lie group that is not nilpotent with left invariant Hermitian metric $g$. Let $g_t$ be the solution to the $\HCFp$ (\ref{eqn:HCF}) starting at $g_0 = g$. Then $g_t$ exists for all positive times and converges in the Cheeger-Gromov topology as $t\to \infty$ to a steady $\HCFp$ soliton $(G_\infty , J_\infty , g_\infty)$.
	\end{thm}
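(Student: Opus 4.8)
The plan is to pass to Lauret's bracket flow, reduce the resulting equation to an ODE for a single complex endomorphism, and analyse that ODE directly. Write $\f g = \C\xi \oplus \f a$ as complex vector spaces, where $\f a$ is the codimension-one abelian ideal and $A := \operatorname{ad}_\xi|_{\f a} \in \operatorname{End}_\C(\f a)$; since $G$ is not nilpotent, $A$ is not a nilpotent operator. Fix once and for all the background Hermitian inner product on $\f g$ making $\xi$ a unit vector orthogonal to $\f a$. By biholomorphism invariance of $\Theta$ and the standard correspondence between homogeneous geometric flows and curves of brackets (cf.\ \cite{LauretBracketFlowRef2016}), the $\HCFp$ is equivalent to a bracket flow $\dot\mu_t = -\tfrac12\,\pi(\Theta_{\mu_t})\mu_t$ on the variety of complex Lie brackets in $\f g \otimes \Lambda^2\f g^*$, where $\pi$ is the canonical $\f{gl}(\f g,\C)$-action and $\Theta_{\mu}$ is the torsion-twisted Chern--Ricci operator of the fixed metric relative to $\mu$. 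The first task is to compute $\Theta_{\mu_A}$ for an almost-abelian bracket from the explicit form of the Chern connection, its torsion $T$, and its curvature on a complex Lie group. I expect $\Theta_{\mu_A}$ to be block-diagonal with respect to $\C\xi\oplus\f a$ — essentially forced by $[\f g,\f g]\subseteq\f a$, which already makes the $\xi$-component of $T$ vanish — with $\Theta_{\mu_A}|_{\f a} = P(A)$ a $\langle\cdot,\cdot\rangle$-self-adjoint endomorphism polynomial in $A, A^*$ (the nonnegative torsion term $Q$ contributing only here) and $\Theta_{\mu_A}(\xi,\B\xi) = \theta_0(A) \in \R$. Granting this, the locus of almost-abelian brackets with ideal $\f a$ is flow-invariant, the bracket is carried by the single operator $A_t$, and the flow becomes
\[
\dot A_t \;=\; \tfrac12\,\theta_0(A_t)\,A_t \;-\; \tfrac12\,[\,P(A_t),A_t\,].
\]

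Two structural features of this ODE drive the argument. First, the term $-\tfrac12[P(A_t),A_t]$ has commutator form $[\,\cdot\,,A_t]$, so it integrates to a curve of conjugations: solving $\dot B_t = -\tfrac12 P(A_t)B_t$ with $B_0 = \id$, one checks $A_t = \exp(\tfrac12\int_0^t\theta_0)\,B_tA_0B_t^{-1}$. Hence the Jordan type of $A_t$ is independent of $t$ and the eigenvalues evolve only by the common positive scalar $\exp(\tfrac12\int_0^t\theta_0)$; in particular the nonzero eigenvalue supplied by non-nilpotency never degenerates. Second, since $P(A_t)$ is self-adjoint, writing $N_t := [A_t,A_t^*]$ for the obstruction to normality, one gets $\tfrac{d}{dt}|A_t|^2 = \theta_0(A_t)|A_t|^2 - \tr(P(A_t)N_t)$, and, with a suitably chosen normalized curvature functional $\mathcal F$ stationary exactly at the soliton brackets (in the spirit of the monotone quantity behind Theorem~\ref{thm:nilpConvThm}), a differential inequality whose boundedness and monotonicity will be used to show $N_t\to0$ and that the scaling integral $\int_0^\infty\theta_0(A_t)\,dt$ converges. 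On an almost-abelian algebra the algebraic-soliton equation $\Theta_{\mu_A} = \lambda\,\id + D$ with $D\in\operatorname{Der}(\mu_A)$ self-adjoint is equivalent to $[P(A),A] = (\theta_0(A)-\lambda)A$: this is a fixed point of the ODE precisely when $\lambda = 0$, whereas $\lambda\neq0$ forces $A_t = e^{\lambda t/2}A_0$. Thus ``$g_t$ converges without rescaling'' and ``the limit is a steady soliton'' are the same phenomenon, and a normal $A$ with $\theta_0(A)=0$ gives exactly a steady soliton.

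Long-time existence of $A_t$ follows by bounding $|A_t|$ above via the $|A_t|^2$ identity above (ruling out finite-time blow-up), or from general long-time existence for homogeneous $\HCFp$ in the spirit of \cite[Theorem~6.3]{UstHCFHom2017}. Given that $\int_0^\infty\theta_0(A_t)\,dt$ converges to a finite value, the conjugation formula shows that the eigenvalues of $A_t$ converge to a nonzero multiset, so $|A_t|$ stays bounded above and away from $0$ and no direction of the metric collapses or blows up — this is where non-nilpotency is essential. From $\mathcal F$ one also gets $N_t\to0$, and since a matrix with small $|[A,A^*]|$ lies near a normal matrix with nearby spectrum while $\operatorname{spec}(A_t)$ is converging, $A_t$ converges modulo the unitary group $U(\f a)$ to a normal $A_\infty\neq0$; this suffices because $U(\f a)$ extends to isometries of the fixed metric fixing $\xi$. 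As $\theta_0$ is $U(\f a)$-invariant it has a limit, which must be $0$ since $\int_0^\infty\theta_0$ is finite, and $[P(A_\infty),A_\infty]=0$ because $A_\infty$ is normal; hence $\mu_{A_\infty}$ is a steady $\HCFp$-soliton bracket. By the bracket-flow convergence criterion of \cite{LauretBracketFlowRef2016}, $(G,J,g_t)$ then converges in the Cheeger--Gromov topology to the simply-connected almost-abelian complex Lie group determined by $\mu_{A_\infty}$ with this metric, which is the sought steady soliton $(G_\infty,J_\infty,g_\infty)$.

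The main obstacle is the analytic content of the third paragraph: constructing the monotone quantity $\mathcal F$ and proving that $\int_0^\infty\theta_0(A_t)\,dt$ is finite and $N_t\to0$. Unlike the Riemannian Ricci-flow setting, the $\HCFp$ bracket flow is not a gradient flow here, so no \L{}ojasiewicz-type inequality is available off the shelf, and one has to build $\mathcal F$ by hand from the explicit algebraic form of $S$, $Q$, hence $\Theta$, on almost-abelian complex Lie algebras, exploiting the balancing of the nonnegative torsion term $Q$ against the negative Chern--Ricci contribution of the non-nilpotent direction to exclude both collapse and blow-up and to pin down the steady limit. By contrast, once these estimates are in place, upgrading sub-convergence to genuine convergence is comparatively soft, since the Jordan type of $A_t$ is frozen and $N_t\to0$.
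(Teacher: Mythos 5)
Your overall strategy coincides with the paper's: pass to the bracket flow, note that the almost-abelian locus is preserved so everything reduces to an ODE for the single operator $A_t$, show the spectrum is conserved, produce a monotone quantity forcing normality in the limit, and use non-nilpotency to rule out collapse to the abelian bracket. One structural point in your favour: the reduced ODE is $\dot A_t = A_t[A_t,A_t^*] = [A_t, A_tA_t^*]$, so your observation that it has commutator form and hence evolves $A_t$ by conjugation (isospectrally) is correct, and is arguably a cleaner route to spectrum preservation than the paper's, which instead shows $\tr A_t^n$ is constant and invokes Newton's identities. Also, the computation you defer at the start is simpler than you anticipate: $P_{\mu_A} = 0\oplus AA^*$, so your scalar $\theta_0(A)$ is identically zero (the image of $\mu_A$ lies in the abelian ideal, so the $Z_0$-block of $\mu_A\mu_A^*$ vanishes); consequently the worry about whether $\int_0^\infty\theta_0$ converges evaporates, and ``the limit is steady'' comes for free.

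The genuine gap is that you explicitly leave the construction of the monotone functional $\mathcal F$ and the proof that $[A_t,A_t^*]\to 0$ as ``the main obstacle,'' so the argument is incomplete precisely at its central analytic step. The resolution is that no bespoke functional is needed: $\|A_t\|^2$ itself is the Lyapunov function. Indeed
\[
\frac{d}{dt}\|A_t\|^2 \;=\; 2\tr\bigl(A_t[A_t,A_t^*]A_t^*\bigr) \;=\; 2\bigl(\tr(A_t^*A_tA_tA_t^*) - \|A_tA_t^*\|^2\bigr)\;\le\; 0
\]
by Cauchy--Schwarz applied to $\langle A_t^*A_t,\,A_tA_t^*\rangle$ together with $\|A^*A\|=\|AA^*\|$, with equality if and only if $A_t$ is normal. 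This gives an a priori bound (hence long-time existence), and the standard Lyapunov argument shows every $\omega$-limit point of $A_t$ is normal; isospectrality then forces all $\omega$-limit points to be unitarily similar --- a single $\operatorname{U}(n)$-orbit of normal matrices with the spectrum of $A$, which is nonzero exactly because $A$ is not nilpotent. Since unitarily related brackets give isometric groups, this already upgrades subconvergence to convergence, and normality plus $P_{\mu_{A_\infty}}=0\oplus A_\infty A_\infty^*$ identifies the limit as a steady algebraic soliton bracket; the passage to Cheeger--Gromov convergence of the groups is then as you describe. So your plan is sound and follows the paper's route, but as written it asserts rather than proves the one estimate that carries the theorem.
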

	By `converges', we mean that for any increasing sequence of times, there exists a subsequence on which the corresponding Hermitian manifolds converge. This is in contrast to Theorem \ref{thm:nilpConvThm}, where we show that there exists \emph{some} sequence of times for which convergence holds.
	
	The proof of Theorem \ref{thm:aaConvThm} again uses Lauret's bracket flow with a suitable monotone quantity. We note that similar results for the Ricci flow on almost-abelian Lie groups were obtained by Arroyo in \cite{ArrRicciAA}.

	Our final result addresses the existence and uniqueness of $\HCFp$-solitons on almost-abelian Lie groups. First note that a simply-connected, complex almost-abelian Lie group is completely determined up to isomorphism by a matrix $A \in \f{gl}_{n-1}(\C)$, where $n = \dim_{\C} (G,J)$. (see Section \ref{subsec:almostAbelian}). For $A \in \f{gl}_{n-1}(\C)$, denote by $(G_A,J_A)$ be the corresponding simply-connected complex almost-abelian Lie group.

	\begin{thm}
		\label{thm:solitonClass}
		Suppose $(G_A,J_A)$ is a simply-connected, almost-abelian complex Lie group. Then $(G_A,J_A)$ admits at most one left-invariant $\HCFp$ soliton up to homotheties. Moreover, $(G_A,J_A)$ admits a  left-invariant $\HCFp$ soliton if and only if $A$ is semi-simple or nilpotent.
	\end{thm}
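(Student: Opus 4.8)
The plan is to translate the statement into linear algebra about the defining matrix $A$ and reduce everything to the analysis of a single matrix equation. Fix the complex vector space $\f g=\C^n$ with its standard Hermitian inner product $\bgm{\cdot}{\cdot}$ and a unitary basis $e_1,\dots,e_n$, set $\f u:=\operatorname{span}_\C\{e_1,\dots,e_{n-1}\}$, and for $B\in\f{gl}_{n-1}(\C)$ let $\mu_B$ be the almost-abelian bracket determined by $\mu_B(e_n,u)=Bu$ for $u\in\f u$ and $\mu_B|_{\f u\times\f u}=0$. After choosing an orthonormal adapted frame, every left-invariant Hermitian metric on $(G_A,J_A)$ is isometric to $(\f g,\mu_B,\bgm{\cdot}{\cdot})$ for some $B$ conjugate to $A$ up to a nonzero complex scalar, while $(G_B,J_B)\cong(G_A,J_A)$ precisely when $B\in\{c\,PAP^{-1}:c\in\C^\ast,\ P\in GL_{n-1}(\C)\}$. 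The first key point is that $(G_A,J_A)$ is complex parallelizable, so the Chern connection of any left-invariant metric is flat in the left-invariant frame; hence $S\equiv 0$, $\Theta=Q$, and $Q$ is built only from the Chern torsion $T=-\mu_B$. A direct computation in the adapted unitary frame then gives, with respect to the splitting $\f g=\f u\oplus\C e_n$,
\[
\Theta=\begin{pmatrix}BB^\ast&0\\0&0\end{pmatrix}.
\]

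Next I would invoke the characterisation of solitons as algebraic solitons, the analogue in this setting of Theorem \ref{thm:nilpSolisAlg}: the metric Lie algebra $(\f g,\mu_B,\bgm{\cdot}{\cdot})$ is an $\HCFp$ soliton if and only if $\Theta=\lambda\,\id+D$ for some $\lambda\in\R$ and some derivation $D$ of $(\f g,\mu_B)$. Feeding in the formula above, $D=\Theta-\lambda\,\id$ preserves the splitting, equals $BB^\ast-\lambda I$ on $\f u$ and $-\lambda$ on $\C e_n$; the derivation identity is vacuous on $\f u\times\f u$ and, evaluated on $(e_n,e_a)$, it becomes exactly
\[
[BB^\ast,B]=-\lambda B .
\]
Such a $D$ is then automatically self-adjoint, and one checks that $\lambda=0$ gives steady solitons, $\lambda<0$ expanding ones, consistent with Theorems \ref{thm:aaConvThm} and \ref{thm:nilpConvThm}. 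So the whole theorem reduces to the question: for which $A$ does the conjugacy-and-scaling class of $A$ contain a solution $B$ of $[BB^\ast,B]=-\lambda B$ for some $\lambda\in\R$, and when it does, is $B$ unique up to unitary conjugation and positive scaling?

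To analyse the equation, take traces: $\tr[BB^\ast,B]=0$ forces $\lambda\,\tr B=0$, while pairing the equation with $B$ in the Hilbert--Schmidt inner product gives $-\lambda\|B\|^2=\tr((BB^\ast)^2)-\tr(B^2B^{\ast 2})\ge 0$, the nonnegativity following from a short Cauchy--Schwarz argument (using that $(|W_{ij}|^2)$ is doubly stochastic for unitary $W$); hence $\lambda\le 0$. If $\lambda=0$ then $BB^\ast$ commutes with $B$, hence also with $B^\ast$, so $\f g$ splits into eigenspaces of the self-adjoint operator $BB^\ast$, each invariant under $B$ and $B^\ast$; on each positive eigenspace $B$ is a scalar multiple of a unitary and $B$ vanishes on $\ker(BB^\ast)=\ker B^\ast$, so $B$ is normal, hence diagonalisable, so $A$ is semi-simple. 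If $\lambda<0$ then $B$ is an eigenvector of $\operatorname{ad}(BB^\ast)$ with eigenvalue $-\lambda>0$, so in an eigenbasis of $BB^\ast$ every nonzero entry of $B$ strictly raises the $BB^\ast$-eigenvalue; iterating, $B$ is nilpotent, so $A$ is nilpotent. Conversely, if $A$ is semi-simple take $B$ a diagonalisation of $A$: then $BB^\ast$ is diagonal, $[BB^\ast,B]=0$, and we obtain a steady soliton on $G_A$; if $A$ is nilpotent, conjugate it to Jordan form and, blockwise, rescale the superdiagonal entries of the block of size $k$ to $\sqrt{k-1},\sqrt{k-2},\dots,1$, producing a matrix $B$ still conjugate to $A$ with $[BB^\ast,B]=B$, an expanding soliton on $G_A$.

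For uniqueness, observe first that a matrix is both semi-simple and nilpotent only when it is $0$ (the flat case, carrying only the flat soliton), and that by the above every soliton in the class of a fixed $A\neq0$ is of a single type: $\lambda=0$ with $B$ normal (when $A$ is semi-simple) or $\lambda<0$ with $B$ nilpotent (when $A$ is nilpotent). In the semi-simple case, any two soliton matrices $B_1,B_2$ in the class of $A$ are normal with proportional spectra, hence $B_2=\gamma VB_1V^\ast$ for some $\gamma\in\C^\ast$ and unitary $V$; since scaling a bracket by a positive constant and multiplying it by a unit complex number both yield homothetic (indeed, in the latter case, isometric) metric Lie algebras, the two metrics are homothetic. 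In the nilpotent case, the corresponding statement is the uniqueness up to isometry and scaling of $\HCFp$-nilsolitons on a fixed nilpotent Lie algebra. I expect the two genuinely substantive points to be (i) establishing the almost-abelian analogue of Theorem \ref{thm:nilpSolisAlg}, i.e.\ passing from a self-similar solution to the algebraic identity $[BB^\ast,B]=-\lambda B$, and (ii) the uniqueness in the nilpotent case; by contrast, the linear-algebra core --- normality $\Leftrightarrow$ semi-simplicity of the class, and the $\operatorname{ad}(BB^\ast)$-eigenvector argument for nilpotency --- is elementary and self-contained.
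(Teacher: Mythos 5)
Your core reduction is the same as the paper's: the orbit/homothety dictionary (Lemma \ref{lem:aaOrbits}), the computation $\Theta=BB^*\oplus 0$ (Proposition \ref{prop:almostAbelianP}), the resulting matrix equation $[BB^*,B]=-\lambda B$ (equivalently $B[B,B^*]=\lambda B$), the rescaled-Jordan-block construction for nilpotent existence, and the trace/Cauchy--Schwarz argument giving $\lambda\le 0$ with equality iff $B$ is normal. Your $\operatorname{ad}(BB^*)$-eigenvector observation (nonzero entries of $B$ raise the $BB^*$-eigenvalue by $-\lambda$, hence $B$ nilpotent when $\lambda<0$) is a clean, self-contained way to get the ``only if'' dichotomy. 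However, the two points you defer at the end are precisely where the substance of the theorem lies, and neither can be quoted from elsewhere.

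First, the passage from ``soliton'' to ``algebraic soliton'' is not a general principle: the paper explicitly notes that outside the nilpotent case it is unknown whether left-invariant solitons are even semi-algebraic, so your $\lambda=0$ branch as written only classifies \emph{algebraic} solitons. The paper closes this gap dynamically (Proposition \ref{prop:nonNilpAASolIsAlg}): Lemma \ref{lem:aaSignOfLambda} shows a non-nilpotent almost-abelian soliton must be steady (using long-time existence and Proposition \ref{prop:convTo0}), so $g_t$ evolves by biholomorphisms alone and $\operatorname{tr}_{g_t}\Theta(g_t)=\|A_t\|^2$ is constant along the flow; strict monotonicity of $\|A_t\|^2$ off normal matrices (Proposition \ref{prop:normMono}) then forces $A$ normal. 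Some argument of this kind is indispensable. Second, uniqueness in the nilpotent case is not a citable fact about $\HCFp$ nilsolitons on a fixed Lie algebra --- it is proved here from scratch. One must show that \emph{every} nilpotent solution of $B[B,B^*]=-B$, not merely the one you construct, is unitarily conjugate to a canonical form determined by the Jordan type of $B$. The paper does this (Propositions \ref{prop:aaSolDecomp}--\ref{prop:aaNilpSolUniqueness}) via the orthogonal splitting $\C^n=\ker B\oplus V_1\oplus\cdots\oplus V_k$, the relations $E_i^*E_i-E_{i+1}E_{i+1}^*=\id$, and a compatible singular value decomposition of the blocks $E_i$, yielding diagonal $\Sigma_i$ with entries $\sigma_{ij}=\sqrt{1+\sigma_{(i+1)j}^2}$. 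Your grading remark is the right entry point to this computation, but the uniqueness argument still has to be carried out.
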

	
	The proof of this theorem is constructive and thus yields examples of $\HCFp$-solitons in arbitrary dimensions (see Propositions \ref{prop:nilpSolCanForm} and \ref{prop:nonNilpAASolIsAlg}). Due to a theorem of Malcev, all nilpotent examples (which occur precisely when $A$ is nilpotent) admit co-compact lattices (see Remark \ref{rmk:lattices}).
	
	Let us mention other recent works studying HCFs on homogeneous spaces. In \cite{UstHCFHom2017}, Ustinovskiy studied the $\HCFp$ on complex homogeneous manifolds $G/H$. He showed that the space of (generically not $G$-invariant) metrics \emph{induced} by left-invariant metrics on $G$ is preserved, and that the flow in this case is governed by an ODE on the Lie algebra of $G$ which is independent of the isotropy $H$. In \cite{PanelliPodestaHCFCmpctHom2020}, Panelli and Podest\`a studied the $\HCFp$ on compact homogeneous spaces. And the $\HCFp$ on 2-step nilpotent Lie groups was studied by Pujia in \cite{pujia2020positive}.
	
	In their original paper, Streets and Tian chose $Q$ such that (\ref{eqn:HCF}) is a gradient flow of a certain functional when $M$ is compact. This flow was studied by Lafuente, Pujia and Vezzoni on complex unimodular Lie groups in \cite{HCFUni2019}. They showed that the flow exists for all positive times and after normalizing, converges in the Cheeger-Gromov topology to a soliton. They also studied the existence and uniqueness of invariant static metrics in this setting. The structure of solitons of this `gradient' HCF on complex Lie groups was studied by Pujia in \cite{PujiaSolitons2019}. Pediconi and Pujia also studied the same flow on locally homogeneous complex surfaces in \cite{Pediconi2020}.
	
	In \cite{StreetsTianPluriclosed2010}, Streets and Tian introduced the \emph{pluriclosed flow} by choosing $Q$ to preserve the pluriclosed condition on Hermitian metrics: $\partial \bar \partial \omega = 0$ where $\omega = g(J\cdot,\cdot)$. The pluriclosed flow was studied by Enrietti, Fino and Vezzoni on $2$-step nilmanifolds in \cite{pluriclosedTwoStep2015}, where it was shown that the flow exists for all positive times. Arroyo and Lafuente then showed in \cite{arrLafHomPlu19} that after a suitable normalisation, the flow converges to a soliton in this setting. They also studied the limiting behaviour of the flow on almost-abelian Lie groups. In \cite{bolingPCFLocHom}, Boling also studied the long-time existence and behaviour of the pluriclosed flow on locally homogeneous complex surfaces.
	
	The rest of the article is organised as follows. In section \ref{sec:prelims} we outline basic computations, discuss the bracket flow and $\HCFp$-solitons on complex Lie groups. In section \ref{sec:nilp} we analyse the $\HCFp$ on nilpotent Lie groups, studying solitons (Theorem {\ref{thm:nilpSolisAlg}}), growth behaviour (Theorem \ref{thm:BFNilpGaugedGrowth}), and proving Theorem \ref{thm:nilpConvThm}. In section \ref{sec:almostAbelian}, we discuss in detail almost-abelian complex Lie groups and prove theorem \ref{thm:aaConvThm}. Finally, in section \ref{sec:solitons} we study solitons on almost-abelian Lie groups and prove Theorem \ref{thm:solitonClass}.
	
	\subsubsection*{Acknowledgements}
	I am grateful to my advisor Ramiro Lafuente for his invaluable input and guidance. I also wish to thank Artem Pulemotov and Romina Arroyo for helpful discussions. This work was supported by an Australian Government Research Training Program (RTP) Scholarship.
	\section{Preliminaries}
	\label{sec:prelims}
	\subsection{$\HCFp$ on complex Lie groups}
	Let $G$ be a complex $n$-dimensional Lie group. Equivalently, $G$ is a $2n$ dimensional real Lie group admitting a bi-invariant complex structure $J$. Denote by $e$ the identity of $G$, $\f{g} \cong T_eG$ the Lie algebra, and $\mu \in \f g\otimes \Lambda^2 \f g^*$, the corresponding Lie bracket. Furthermore, denote $\f g^{1,0} := \{X\in\f g \otimes \C : JX = iX\}$. Let $g$ be a left-invariant, Hermitian metric on $(G,J)$. It was shown in \cite{HCFUni2019} that for a holomorphic, left-invariant frame $\{Z_1,\dots ,Z_{n}\} \subset \f g^{1,0}$, the torsion-twisted Chern-Ricci tensor $\Theta(g)$ is given by
	\begin{equation}
	\label{eqn:thetaDef}
	\Theta(g)(Z_i,Z_{\B j}) = \frac 12 g^{k\B l}g^{r \B s}g(Z_i,\mu(Z_{\B l},Z_{\B s}))\cdot g(\mu(Z_{k},Z_{r}) , Z_{\B j}),
	\end{equation}
	where we are employing the Einstein summation convention. We note here, as was done in \cite{HCFUni2019}, that the `$S$' term in $\Theta$ does not appear, since the mixed brackets $\mu(Z_i,Z_{\bar{j}})$ vanish on a complex Lie group.

	\subsection{The bracket flow}
	\label{sec:bracketFlow}
	The philosophy of varying brackets rather than metrics underlies a lot of successful research in homogeneous geometry. The \emph{bracket flow} approach introduced in \cite{lauret2015CurvatureFlows} for almost hermitian Lie groups stems from this idea and has been used to study a variety of geometric flows. We use this section to give a brief outline of it. 
	
	Let $(G,J)$ be a simply-connected complex Lie group with Lie algebra $(\f g,\mu)$ and fix a background, $J$-Hermitian, left-invariant inner product $\bgm{\cdot}{\cdot}$ on $\f g$. By considering $(\f g,J)$ as a fixed complex vector space, we regard $\mu$ as the Lie algebra itself, viewed as a member of the \emph{variety of complex Lie algebras}:
	\[
	\vcla := \{\mu \in  \f g \otimes \Lambda^2 \f g^* : \mu \text{ satisfies the Jacobi identity and }\mu(J\cdot,\cdot) = J\mu(\cdot,\cdot) \}.
	\]
	The Lie group
	\[
	\textrm{GL}(\f g,J) := \{f \in \textrm{GL}(\f g): f\circ J = J \circ f\} \cong \textrm{GL}_n(\C),
	\]
	acts on $\vcla$ via the `change of basis' action given by
	\begin{equation}
	\label{eqn:GLact}
	h \cdot \mu = h\mu(h^{-1}\cdot,h^{-1}\cdot), \qquad h \in \textrm{GL}(\f g,J).
	\end{equation}
	We can write any other Hermitian inner product on $\f g$ as $\bgm{\varphi\cdot}{\varphi\cdot}$ for some $\varphi \in \textrm{GL}(\f g, J)$. Considered as a map of Lie algebras,
	\[
	\varphi:(\f g, J, \mu, \bgm{\varphi\cdot}{\varphi\cdot}) \to (\f g, J, \varphi\cdot\mu,\bgm{\cdot}{\cdot})
	\]
	preserves the complex, metric, and Lie bracket structures. At the Lie group level, let $g$ be the left-invariant hermitian metric on $(G,J)$ defined by $g(e) = \bgm{\varphi\cdot}{\varphi\cdot}$. Let $(G_{\varphi\cdot \mu},J_{\varphi\cdot \mu},g_{\varphi\cdot \mu})$ be the unique simply-connected, complex Lie group with Hermitian metric defined by the data $(\f g, J, \varphi\cdot\mu,\bgm{\cdot}{\cdot})$. The Lie group homomorphism
	\[
	\varPhi: (G,J,g)\to (G_{\varphi\cdot \mu},J_{\varphi\cdot \mu},g_{\varphi\cdot \mu}),
	\]
	defined by $(\textrm{d}\varPhi)_e = \varphi$ is then an equivariant biholomorphic isometry. Thus, the space of left-invariant Hermitian metrics on $(G,J)$ can be parametrised by the orbit
	\[
	\textrm{GL}(\f g, J)\cdot \mu \subset \vcla.
	\]
	Define $\textrm{U}(\f g,J) := \{k \in \textrm{GL}(\f g,J) : \bgm{k\cdot}{k\cdot} = \bgm{\cdot}{\cdot} \}$. Then, two brackets $\mu_1,\mu_2 \in \textrm{U}(\f g, J)\cdot \mu$ correspond to biholomorphically isometric Lie groups. 
	From this `varying brackets' view point, we can naturally ask what the $\HCFp$ looks like in the orbit $\textrm{GL}(\f g, J)\cdot \mu$. Define
	\[
	\f {gl}(\f g,J) := \{A \in \operatorname{End}(\f g) : [A,J] = 0\} \cong \f{gl}_n(\C),
	\]
	to be the Lie algebra of $\operatorname{GL}(\f g,J)$. The \emph{bracket flow} is the following ODE of brackets in the space $\vcla \subset   \f g \otimes \Lambda^2 \f g^*$.
	\begin{equation}
	\label{eqn:bracketFlow}
	\dot \mu_t = -\pi(P_{\mu_t})\mu_t,\qquad \mu_0 = \mu.
	\end{equation}
	Here $\pi$ is the derivative of the action (\ref{eqn:GLact}) given by 
	\[
	\pi(A)\mu = A\mu - \mu(A\cdot , \cdot) - \mu(\cdot , A \cdot), \qquad A \in \f {gl}(\f g , J),
	\]
	and the map $\vcla \ni \mu \mapsto P_\mu \in \f{gl}(\f g, J)$ is defined implicitly by
	\begin{equation}
	\label{eqn:pmuDef}
	g(P_{\mu}\cdot , \cdot) = \Theta_\mu(g),
	\end{equation}
	Where $\Theta_\mu(g)$ is the torsion-twisted Chern Ricci tensor associated to the Hermitian Lie group defined by $(\f g,J,\mu,\bgm{\cdot}{\cdot})$. The bracket flow is equivalent to the $\HCFp$ in the following sense.
	\begin{thm}[{\cite[Theorem 1.1]{lauret2015CurvatureFlows}}]
	\label{thm:BracketFlowEquiv}
	Let $(G,J,g)$ be a complex Lie group with left-invariant metric $g$. Let $g_t$ be the solution to the $\HCFp$ (\ref{eqn:HCF}) and $\mu_t$ the solution to the bracket flow (\ref{eqn:bracketFlow}). Then $g_t$ and $\mu_t$ are defined on the same time-interval $0\in I \subset \R$. Moreover, for all $t \in I$, the Lie group with left-invariant Hermitian metric $(G_{\mu_t},J_{\mu_t},g_{\mu_t})$ defined by the data $(\f g,J,\mu_t,\bgm{\cdot}{\cdot})$ is equivariantly, biholomorphically isometric to $(G,J,g_t)$.
	\end{thm}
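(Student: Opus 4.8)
The plan is to follow Lauret's argument \cite{lauret2015CurvatureFlows}, which moves the $\HCFp$ ODE from the (finite‑dimensional) space of left‑invariant Hermitian metrics on $(G,J)$ to the orbit $\operatorname{GL}(\f g,J)\cdot\mu\subset\vcla$ by means of a time‑dependent change of basis. The key input is the \emph{equivariance} of the torsion‑twisted Chern--Ricci tensor under the $\operatorname{GL}(\f g,J)$‑action: $\Theta$ is natural with respect to holomorphic isomorphisms of Lie algebras — transparent from the purely algebraic formula \eqref{eqn:thetaDef}, which involves only $\mu$ and the inner product — so the fact that $h\colon(\f g,J,\mu,\bgm{h\cdot}{h\cdot})\to(\f g,J,h\cdot\mu,\bgm{\cdot}{\cdot})$ is an isometric Lie‑algebra isomorphism gives
\[
\Theta_{h\cdot\mu}(\bgm{\cdot}{\cdot})(hX,hY)=\Theta_\mu(\bgm{h\cdot}{h\cdot})(X,Y),\qquad X,Y\in\f g,\quad h\in\operatorname{GL}(\f g,J).
\]
As a first consequence, naturality of $\Theta$ forces left translations on $(G,J)$ — which are biholomorphisms — to remain isometries along \eqref{eqn:HCF} (by uniqueness of solutions, $L_a^*g_t$ and $g_t$ agree), so $g_t$ stays left‑invariant and \eqref{eqn:HCF} is equivalent to the ODE $\partial_t g_t(e)=-\Theta_\mu(g_t(e))$ on the cone of $J$‑Hermitian inner products on $\f g$.

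Next I would set up the dictionary between the two flows. Let $\mu_t$ be the bracket‑flow solution on its maximal interval $I$; since $\Theta_\mu(\bgm{\cdot}{\cdot})$, and hence $P_\mu$, depends polynomially on $\mu$ (again by \eqref{eqn:thetaDef}), the map $t\mapsto P_{\mu_t}$ is smooth on $I$, so the \emph{linear} ODE
\[
\dot h_t=-c\,P_{\mu_t}\,h_t,\qquad h_0=\operatorname{Id},
\]
with $c$ the normalisation constant forced by \eqref{eqn:HCF} and \eqref{eqn:bracketFlow}, has a solution $h_t\in\operatorname{GL}(\f g,J)$ on all of $I$. Differentiating $h_t\cdot\mu=h_t\,\mu(h_t^{-1}\cdot,h_t^{-1}\cdot)$ and substituting $\dot h_t=-c\,P_{\mu_t}h_t$ yields $\tfrac{d}{dt}(h_t\cdot\mu)=-c\,\pi(P_{\mu_t})(h_t\cdot\mu)$; thus $h_t\cdot\mu$ and $\mu_t$ solve the same linear ODE $\dot x=-c\,\pi(P_{\mu_t})x$ with the same initial value $\mu$, whence $\mu_t=h_t\cdot\mu$ on $I$. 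On the metric side, let $g_t$ be the left‑invariant metric with $g_t(e)=\bgm{h_t\cdot}{h_t\cdot}$. Differentiating and using the equivariance identity, the defining relation $\bgm{P_\mu\cdot}{\cdot}=\Theta_\mu(\bgm{\cdot}{\cdot})$, and the $\bgm{\cdot}{\cdot}$‑self‑adjointness of $P_\mu$ (valid since $\Theta$ is a symmetric tensor) shows that $g_t$ solves \eqref{eqn:HCF}; by uniqueness it is \emph{the} $\HCFp$ solution with $g_0=g$.

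It then remains to see that the two maximal intervals coincide. The construction above shows the bracket‑flow interval is contained in the $\HCFp$ interval. For the reverse, suppose $g_t$ exists on $[0,T)$ but $\mu_t$ has maximal interval $[0,T')$ with $T'<T$. On $[0,T')$ we have $\mu_t=h_t\cdot\mu$ with $\bgm{h_t\cdot}{h_t\cdot}=g_t(e)$, which extends smoothly to $t=T'$; moreover $h_t^*P_{\mu_t}h_t$ is, by equivariance, the operator representing $\Theta(g_t)(e)$, which also extends smoothly, while $h_t^*h_t$ represents $g_t(e)$ and stays in a compact set of positive operators. Writing $h_t$ as a unitary times $g_t(e)^{1/2}$ and using compactness of $\operatorname{U}(\f g,J)$, one deduces that $P_{\mu_t}$ stays bounded on $[0,T')$; then $\dot h_t=-c\,P_{\mu_t}h_t$ together with its determinant equation keeps $h_t$ in a compact subset of $\operatorname{GL}(\f g,J)$, so $\mu_t=h_t\cdot\mu$ stays in a compact subset of $\vcla$ — contradicting maximality of $[0,T')$ (the bracket flow has a polynomial, hence globally smooth, right‑hand side, so a finite‑time singularity must be a blow‑up in norm). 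Hence $T'\geq T$ and the intervals agree. Finally, for each $t$, $h_t$ is by construction an isometric complex Lie‑algebra isomorphism $(\f g,J,\mu,\bgm{h_t\cdot}{h_t\cdot})\to(\f g,J,\mu_t,\bgm{\cdot}{\cdot})$; integrating it to the simply‑connected groups, exactly as in the paragraph preceding the theorem (with $(\mathrm d\varPhi)_e=h_t$), gives the asserted equivariant biholomorphic isometry $(G,J,g_t)\cong(G_{\mu_t},J_{\mu_t},g_{\mu_t})$.

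I expect the main obstacle to be the maximal‑interval matching rather than anything conceptual: the correspondence $h_t\leftrightarrow(\mu_t,g_t)$ and the verification that solutions map to solutions are direct computations once the equivariance identity for $\Theta$ is in hand (and that identity is essentially dictated by \eqref{eqn:thetaDef}), but one must rule out that one flow develops a singularity strictly before the other; the linear‑ODE device for $h_t$, together with the boundedness estimate above, is what makes this clean. Tracking the normalisation constants (factors such as $\tfrac12$) relating $\dot h_t$, $\partial_t g_t$ and $\dot\mu_t$ is then routine bookkeeping.
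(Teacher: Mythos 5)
Your argument is correct, and it is essentially the proof of Lauret's theorem that the paper simply cites rather than reproves: equivariance of $\Theta$ under $\operatorname{GL}(\f g,J)$ (read off from (\ref{eqn:thetaDef})), a time-dependent change of basis $h_t$ solving a linear ODE, and the identifications $\mu_t=h_t\cdot\mu$, $g_t(e)=\bgm{h_t\cdot}{h_t\cdot}$. The one place you genuinely deviate is the matching of maximal intervals: Lauret runs the change-of-basis construction in \emph{both} directions (defining $h_t$ from the metric solution via $\dot h_t = -c\,h_t\hat P_{g_t}$, with $\hat P_{g_t}$ the operator representing $\Theta(g_t)(e)$ in $g_t(e)$, to get the inclusion of the $\HCFp$ interval into the bracket-flow interval directly), whereas you argue the reverse inclusion by contradiction using the polar decomposition $h_t=u_t\,(h_t^*h_t)^{1/2}$ and compactness of $\operatorname{U}(\f g,J)$; both work, and your compactness route in fact needs no bound on $P_{\mu_t}$ at all, since $h_t^*h_t$ represents $g_t(e)$ and hence stays in a compact set of positive operators, forcing $\mu_t=h_t\cdot\mu$ to stay bounded. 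One bookkeeping point to be honest about: a single constant $c$ in $\dot h_t=-c\,P_{\mu_t}h_t$ yields $\dot\mu_t=-c\,\pi(P_{\mu_t})\mu_t$ but $\partial_tg_t=-2c\,\Theta(g_t)$, so (\ref{eqn:HCF}) and (\ref{eqn:bracketFlow}) as literally written correspond only up to the constant time reparametrisation $t\mapsto 2t$ (equivalently, the bracket flow should carry $\tfrac12 P_\mu$); this is a convention wrinkle inherited from the paper and is immaterial for every qualitative statement, but it is exactly the ``routine bookkeeping'' you defer, so it is worth pinning down once.
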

	
	More generally, the $\HCFp$ is also equivalent to the so called \textit{gauged bracket flow}, which comes from the fact that $P_{\mu}$ is $\operatorname{U}(\f g,J)$-equivariant (i.e. $P_{k\cdot \mu} = kP_\mu k^*$ for $k\in \operatorname{U}(\f g,J)$). First recall that the Lie algebra of $\operatorname{U}(\f g, J)$ is given by
	\[
	\f u(\f g, J) = \{A \in \f{gl}(\f g, J): \bgm{A\cdot}{\cdot}= - \bgm{\cdot}{A\cdot}\}.
	\]
	Then we have the following
	\begin{thm}
		\label{thm:gaugedBracketFlowEquiv}
		Suppose $\nu_t$ solves the gauged bracket flow equation
		\begin{equation}
		\label{eqn:gaugedBF}
		\dot \nu_t = -\pi(P_{\nu_t} - S_{\nu_t})\nu_t,\qquad\nu_0 = \mu,
		\end{equation}
		where $\vcla \ni \nu \mapsto S_\nu \in \f{u}(\f g,J)$ is a smooth map. Then there exists a smooth family $k_t \in \operatorname{U}(\f g, J)$ such that $\nu_t = k_t \cdot \mu_t$, where $\mu_t$ solves the bracket flow (\ref{eqn:bracketFlow}). In particular, $(G_{\mu_t},J_{\mu_t},g_{\mu_t})$ and $(G_{\nu_t},J_{\nu_t},g_{\nu_t})$ are equivariantly, biholomorphically isometric.
	\end{thm}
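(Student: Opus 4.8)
The plan is to construct the unitary gauge $k_t$ directly, as the solution of an auxiliary linear ODE driven by the gauge term $S_{\nu_t}$, and then to check that $\mu_t:=k_t^{-1}\cdot\nu_t$ solves the ungauged bracket flow (\ref{eqn:bracketFlow}). Uniqueness for that ODE identifies $\mu_t$ with the bracket flow solution of Theorem \ref{thm:BracketFlowEquiv}, which is exactly the relation we want, and the last sentence of the statement is then immediate from the $\operatorname{U}(\f g,J)$-orbit discussion in Section \ref{sec:bracketFlow}.

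First I would fix the solution $\nu_t$ of the gauged bracket flow (\ref{eqn:gaugedBF}) on its maximal interval of existence and let $k_t\in\operatorname{GL}(\f g,J)$ solve the linear initial value problem $\dot k_t=S_{\nu_t}k_t$, $k_0=\id$. Since $\nu\mapsto S_\nu$ is smooth, $k_t$ exists and is smooth on the same interval. Because $S_{\nu_t}\in\f u(\f g,J)$ and $\f u(\f g,J)$ is the Lie algebra of the closed subgroup $\operatorname{U}(\f g,J)\subset\operatorname{GL}(\f g,J)$, standard ODE theory gives $k_t\in\operatorname{U}(\f g,J)$ for all $t$; concretely, differentiating $k_t^*k_t$ and using $S_{\nu_t}^*=-S_{\nu_t}$ shows it is constant and hence equal to $\id$. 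Set $h_t:=k_t^{-1}\in\operatorname{U}(\f g,J)$, which then solves $\dot h_t=-h_tS_{\nu_t}$.

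Next I would put $\mu_t:=h_t\cdot\nu_t$, so that $\nu_t=k_t\cdot\mu_t$ and $\mu_0=\mu$, and compute $\dot\mu_t$ using the product rule for the orbit map together with the equivariance identity $h\cdot(\pi(A)\lambda)=\pi(hAh^{-1})(h\cdot\lambda)$:
\[
\dot\mu_t=\pi(\dot h_th_t^{-1})\mu_t+h_t\cdot\dot\nu_t=\pi\!\left(\dot h_th_t^{-1}-h_t(P_{\nu_t}-S_{\nu_t})h_t^{-1}\right)\mu_t.
\]
Since $\dot h_th_t^{-1}=-h_tS_{\nu_t}h_t^{-1}$, the $S$-terms cancel and $\dot\mu_t=-\pi(h_tP_{\nu_t}h_t^{-1})\mu_t$. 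Now the $\operatorname{U}(\f g,J)$-equivariance $P_{k\cdot\lambda}=kP_\lambda k^*$ applied to $\mu_t=h_t\cdot\nu_t$ with $h_t$ unitary gives $P_{\mu_t}=h_tP_{\nu_t}h_t^*=h_tP_{\nu_t}h_t^{-1}$, so $\dot\mu_t=-\pi(P_{\mu_t})\mu_t$ with $\mu_0=\mu$. By uniqueness of solutions of (\ref{eqn:bracketFlow}) — the right-hand side being smooth in $\mu$ — this $\mu_t$ is the bracket flow solution; thus $\nu_t=k_t\cdot\mu_t$ with $k_t\in\operatorname{U}(\f g,J)$, and since $\nu_t$ and $\mu_t$ then lie in a common $\operatorname{U}(\f g,J)$-orbit, the Hermitian Lie groups $(G_{\nu_t},J_{\nu_t},g_{\nu_t})$ and $(G_{\mu_t},J_{\mu_t},g_{\mu_t})$ are equivariantly, biholomorphically isometric.

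I expect the computation itself to be routine; the one genuinely load-bearing point is that $S_\nu$ must take values in $\f u(\f g,J)$ rather than in all of $\f{gl}(\f g,J)$. This is exactly what keeps $h_t$ unitary, and unitarity of $h_t$ is in turn what upgrades the $\operatorname{U}(\f g,J)$-equivariance $P_{h\cdot\nu}=hP_\nu h^*$ into the conjugation $P_{\mu_t}=h_tP_{\nu_t}h_t^{-1}$ that is needed to match the $\pi(\cdot)$ term and collapse the computation to the bracket flow. The rest is bookkeeping: getting the signs and the left/right placement right in $\dot k_t=S_{\nu_t}k_t$ versus $\dot h_t=-h_tS_{\nu_t}$, and in the two equivariance identities.
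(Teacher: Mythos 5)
Your proof is correct and is essentially the argument the paper gives (by deferring to the reference it cites): construct the unitary gauge as the solution of an ODE driven by $S$, use skew-adjointness to keep it in $\operatorname{U}(\f g,J)$, and combine the equivariance of the action and of $P$ with ODE uniqueness. The only difference is the direction — the paper starts from the bracket-flow solution $\mu_t$ and solves the nonlinear ODE $\dot k_t = S_{k_t\cdot\mu_t}k_t$ to produce $\nu_t=k_t\cdot\mu_t$, whereas you start from $\nu_t$ and solve the linear ODE $\dot k_t=S_{\nu_t}k_t$, then undo the gauge — which is an equivalent and, if anything, slightly cleaner formulation.
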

	\begin{proof}
		The proof precisely follows that of \cite[Theorem 2.2]{arrLafHomPlu19}, where $k_t$ is chosen to solve $\dot k_t = S_{k_t\cdot \mu_t}k_t$ with initial condition $k_0 = \id$.
	\end{proof}
	
	By choosing a smooth map $\vcla \ni \nu \mapsto S_\nu \in \f{u}(\f g , J)$ appropriately, analysis of the gauged bracket flow (\ref{eqn:gaugedBF}) can be greatly simplified.
	
	As a final modification to the bracket flow, we may wish to observe the behaviour of the dynamical system after an appropriate normalisation. That is, the behaviour of $\tilde \nu_t := c(t)\nu_t$ for some positive function $c$. After a time re-parametrization, $\tilde \nu_t$ satisfies
	\begin{equation}
	\tilde \nu_t = -\pi(P_{\tilde \nu_t} - S_{\tilde \nu_t} + r(t) \operatorname{Id}){\tilde \nu_t} \qquad \tilde \nu_0 = c(0)\mu, 
	\end{equation}
	for some appropriately defined function $r\colon \R\to \R$ (see \cite[\S 3.3]{lauretRicFlowHomMfd}).
	
	Let us compute $P_\mu$ explicitly as we will refer to it later.
	\begin{lem}
		\label{lem:PmuForm}
		Let $\{Z_i\}_{i=1}^n$ be a left-invariant $g$-unitary frame on $(G,J,g)$, then $P_\mu$ is given by
		\[
		P_\mu X = \sum_{i < j} g(X, \mu(Z_{\B i},Z_{\B{j}}))\mu(Z_i,Z_j)
		\]
		for each $X \in \f g^{1,0}$.
	\end{lem}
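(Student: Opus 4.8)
The plan is to unravel the implicit definition (\ref{eqn:pmuDef}) of $P_\mu$ using the explicit formula (\ref{eqn:thetaDef}) for the torsion-twisted Chern--Ricci tensor, specialised to a unitary frame. First I would note that since $\{Z_i\}_{i=1}^n$ is $g$-unitary we have $g(Z_i,Z_{\B j}) = \delta_{ij}$ (and $g(Z_i,Z_j) = g(Z_{\B i},Z_{\B j}) = 0$, as $g$ is extended $\C$-bilinearly), so the inverse-metric coefficients satisfy $g^{k\B l} = \delta^{kl}$. Substituting this into (\ref{eqn:thetaDef}) collapses the sums over $\B l$ and $\B s$ and yields
\[
\Theta_\mu(g)(Z_i,Z_{\B j}) = \tfrac12\sum_{k,r} g\bigl(Z_i,\mu(Z_{\B k},Z_{\B r})\bigr)\, g\bigl(\mu(Z_k,Z_r),Z_{\B j}\bigr).
\]

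Next I would introduce the candidate endomorphism: let $\hat P\colon \f g^{1,0}\to\f g^{1,0}$ be given by $\hat P X := \sum_{i<j} g(X,\mu(Z_{\B i},Z_{\B j}))\,\mu(Z_i,Z_j)$, which indeed lands in $\f g^{1,0}$ because $\mu$ is $\C$-bilinear and $J$-compatible, and extend it to an element of $\f{gl}(\f g,J)$ in the only way compatible with $[\hat P,J] = 0$ (equivalently, via the identification $\f g\cong\f g^{1,0}$ as complex vector spaces). I would then compute directly
\[
g\bigl(\hat P Z_i,Z_{\B j}\bigr) = \sum_{a<b} g\bigl(Z_i,\mu(Z_{\B a},Z_{\B b})\bigr)\, g\bigl(\mu(Z_a,Z_b),Z_{\B j}\bigr),
\]
and observe that in the double sum of the previous display the diagonal terms $k=r$ vanish since $\mu(Z_{\B k},Z_{\B k}) = 0$, while the terms indexed by $(k,r)$ and $(r,k)$ are equal by the antisymmetry of $\mu$; hence $\tfrac12\sum_{k,r} = \sum_{k<r}$ and the two expressions agree. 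Therefore $g(\hat P Z_i,Z_{\B j}) = \Theta_\mu(g)(Z_i,Z_{\B j})$ for all $i,j$.

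To conclude, an endomorphism in $\f{gl}(\f g,J)$ is determined by its restriction to $\f g^{1,0}$, and this restriction is in turn determined by the numbers $g(\,\cdot\, Z_i,Z_{\B j})$ because $g$ pairs $\f g^{1,0}$ non-degenerately with $\f g^{0,1} = \overline{\f g^{1,0}}$. Since both $P_\mu$ and $\hat P$ lie in $\f{gl}(\f g,J)$ and $g(P_\mu Z_i,Z_{\B j}) = \Theta_\mu(g)(Z_i,Z_{\B j}) = g(\hat P Z_i,Z_{\B j})$ by (\ref{eqn:pmuDef}) and the above, we get $P_\mu = \hat P$, which is the assertion. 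I do not anticipate a real obstacle: the substance of the lemma is the substitution of the unitary normalisation and the antisymmetrisation of the double sum. The only point meriting a little care is the passage between $P_\mu$ viewed as a (real) element of $\f{gl}(\f g,J)$ and its restriction to $\f g^{1,0}$ --- in particular, checking that the stated formula on $\f g^{1,0}$ does extend to a well-defined endomorphism commuting with $J$ --- but this is immediate from $\mu$ being a complex Lie bracket, and if desired one also verifies Hermiticity of $\hat P$ (consistent with $\Theta_\mu(g)$ being a Hermitian form) by a short conjugation computation.
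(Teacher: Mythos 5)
Your proposal is correct and follows exactly the route the paper takes (which it leaves as a one-line verification): substitute the unitary normalisation $g^{k\B l}=\delta^{kl}$ into (\ref{eqn:thetaDef}), convert $\tfrac12\sum_{k,r}$ into $\sum_{k<r}$ by antisymmetry of $\mu$, and identify the result with $g(\hat P\,\cdot,\cdot)$ via the nondegeneracy in (\ref{eqn:pmuDef}). The extra care you take about extending $\hat P$ to an element of $\f{gl}(\f g,J)$ is sound but not a point of divergence.
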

	\begin{proof}
		This follows directly from formula (\ref{eqn:thetaDef}), the definition of $P_\mu$ (\ref{eqn:pmuDef}), and the fact that $\{Z_i\}_{i=1}^n$ is $g$-unitary.
	\end{proof}
	
	The background inner product $\bgm{\cdot}{\cdot}$ on $\f g$ naturally induces an inner product on $\Lambda^2 \f g$ defined by $\bgm{v_1\wedge v_2}{w_1\wedge w_2} := \det(\bgm{v_i}{w_j})$ for $v_i,w_i \in \f g$, $1 \leq i \leq 2$. It immediately follows that $\{Z_i\wedge Z_i\}_{i<j}$ is a unitary basis for this inner product. Denote by $(\cdot)^*$ the adjoint of a linear map between two inner product spaces. We can also induce an inner product on all tensor combinations of $\f g$, $\f g^*$ and $\Lambda^2 \f g$ in the natural way. In particular, the induced inner product on $\f g\otimes \f g^* \cong \operatorname{End}(\f g)$ is given by $\bgm{A}{B} = \tr(AB^*)$ for $A,B \in \operatorname{End}(\f g)$. Similarly, the inner product on the space of brackets $\f g\otimes \Lambda^2 \f g^*$ is given by $\bgm{\mu}{\nu} = \tr(\mu\nu^*)$ for $\mu,\nu \colon \Lambda^2 \f g \to \f g$.
	
	In this notation, Lemma \ref{lem:PmuForm} gives a simple formula for $P_\mu$.
	\begin{cor}
	\label{cor:PmuAdjForm}
	$P_\nu = \nu\nu^*$, for all brackets $\nu \in \f g \otimes \Lambda^2 \f g^*$.
	\end{cor}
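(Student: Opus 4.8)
The plan is a direct computation: rewrite the formula of Lemma~\ref{lem:PmuForm} in the operator notation just introduced and recognise the resulting sum as the composition $\nu\circ\nu^{*}$. Since $P_{\nu}$ and $\nu\nu^{*}$ are both endomorphisms of $\f g$ commuting with $J$, their $\C$-linear extensions to $\f g\otimes\C$ are determined by their restriction to $\f g^{1,0}$, so it suffices to check the identity there.

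Concretely, I would fix a $\bgm{\cdot}{\cdot}$-unitary frame $\{Z_{i}\}_{i=1}^{n}$ of $\f g^{1,0}$, so that $\{Z_{i}\wedge Z_{j}\}_{i<j}$ is a unitary basis of $\Lambda^{2}\f g^{1,0}$, which together with the conjugate vectors $Z_{\B{i}}\wedge Z_{\B{j}}$ and the mixed vectors $Z_{i}\wedge Z_{\B{j}}$ spans $\Lambda^{2}(\f g\otimes\C)$. For $X\in\f g^{1,0}$ I expand $\nu^{*}X$ in this basis: by the defining property of the adjoint, the coefficient on a basis element $E$ is $\bgm{X}{\nu(E)}$. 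The mixed terms drop out because $\nu(Z_{i},Z_{\B{j}})=0$ on a complex Lie group (as recalled after~(\ref{eqn:thetaDef})), and the conjugate terms drop out because $\nu(Z_{\B{i}},Z_{\B{j}})\in\f g^{0,1}$ is orthogonal to $X\in\f g^{1,0}$. Hence $\nu^{*}X=\sum_{i<j}\bgm{X}{\nu(Z_{i},Z_{j})}\,Z_{i}\wedge Z_{j}$, and applying $\nu$ yields $\nu\nu^{*}X=\sum_{i<j}\bgm{X}{\nu(Z_{i},Z_{j})}\,\nu(Z_{i},Z_{j})$. Comparing with Lemma~\ref{lem:PmuForm}, which gives $P_{\nu}X=\sum_{i<j}\bgm{X}{\nu(Z_{\B{i}},Z_{\B{j}})}\,\nu(Z_{i},Z_{j})$, and using that $\nu$ is a real bracket so that $\B{\nu(Z_{i},Z_{j})}=\nu(Z_{\B{i}},Z_{\B{j}})$, the two sums agree termwise.

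I expect the only delicate point to be conjugation bookkeeping: one has to be consistent about whether $\bgm{\cdot}{\cdot}$ denotes the Hermitian inner product on $\f g^{1,0}$ (equivalently on $\Lambda^{2}\f g^{1,0}$) or its complex-bilinear extension to $\f g\otimes\C$, and about which of these is used both to define $\nu^{*}$ and to write formula~(\ref{eqn:thetaDef}). Once these conventions are pinned down the identity is essentially immediate, with no analytic or algebraic obstacle; as a sanity check, the formula $P_{\nu}=\nu\nu^{*}$ also makes transparent that $P_{\nu}$ is $\bgm{\cdot}{\cdot}$-self-adjoint and positive semi-definite, matching the fact that $\Theta_{\nu}(\bgm{\cdot}{\cdot})$ is a non-negative Hermitian form.
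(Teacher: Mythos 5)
Your proposal is correct and follows essentially the same route as the paper: both start from Lemma~\ref{lem:PmuForm}, expand $\nu^{*}X$ against the unitary basis $\{Z_i\wedge Z_j\}_{i<j}$ of $\Lambda^{2}\f g^{1,0}$ via the defining property of the adjoint, and identify the resulting sum with $\nu\nu^{*}X$. Your extra care about the mixed and conjugate components and the conjugation conventions is the only difference, and it is consistent with the paper's (more terse) computation.
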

	\begin{proof}
	Let $\{Z_i\}_{i=1}^n\subset \f g^{1,0}$ be a unitary basis for $\f g^{1,0}$, then $\{Z_i\wedge Z_j\}_{i < j}$ is a unitary basis for $\Lambda^2 \f g^{1,0}$. 	With this, we note that by Lemma \ref{lem:PmuForm}, for $X \in \f g^{1,0}, \nu \in   \f g \otimes \Lambda^2 \f g^*$,
	\[
	\begin{split}
	P_\nu X &= \sum_{i < j} \bgm{X}{\B{\nu(Z_i \wedge Z_j)}}\nu(Z_i \wedge Z_j)\\
	&= \sum_{i < j} \bgm{\nu^*X}{\B {Z_i \wedge Z_j}}\nu(Z_i \wedge Z_j)\\
	&= \nu \nu^*X.
	\end{split}
	\]
	Thus, $P_\nu = \nu \nu^*$ for all $\nu \in  \f g \otimes \Lambda^2 \f g^*$.
	\end{proof}
	
	\subsection{Static metrics and solitons}
	Important solutions to (\ref{eqn:HCF}) are the so called static metrics and solitons. A Hermitian metric $g$ on $(M,J)$ is called an ($\HCFp$)-static metric if
	\[
	\Theta(g) = \lambda g,
	\]
	for some $\lambda \in \R$. Such metrics are special in the sense that the corresponding $\HCFp$ solution evolves only by scaling of the initial metric. These are analogous to Einstein metrics in the Riemannian case. We can observe the following necessary condition for left-invariant static metrics to exist on a complex Lie group $G$.
	
	\begin{prop}
		If $(G,J)$ is a non-abelian complex Lie group admitting a left-invariant static metric, then $\mu(\f g,\f g) = \f g$. In particular, if $G$ is solvable, it can not admit a static left-invariant metric. 
	\end{prop}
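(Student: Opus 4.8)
The plan is to reduce the static condition to an algebraic identity for the bracket and then read off surjectivity. Fix a $g$-unitary left-invariant frame and let $\mu\in\vcla$ be the associated bracket, so that by the defining relation (\ref{eqn:pmuDef}) the static equation $\Theta(g)=\lambda g$ becomes $P_\mu=\lambda\,\id$. Invoking Corollary~\ref{cor:PmuAdjForm}, this says precisely that $\mu\mu^*=\lambda\,\id$ as an operator on $\f g^{1,0}$ (equivalently, regarding $\mu\colon\Lambda^2\f g\to\f g$).

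First I would observe that $\mu\mu^*$ is positive semi-definite, since $\bgm{\mu\mu^*v}{v}=\|\mu^*v\|^2\ge 0$; hence $\lambda\ge 0$, with $\lambda=0$ if and only if $\mu^*=0$, i.e.\ $\mu=0$. If $\lambda=0$, then $\mu=0$ and $G$ is abelian, contradicting the hypothesis; therefore $\lambda>0$ and $\mu\mu^*=\lambda\,\id$ is invertible. In particular the linear map $\mu\colon\Lambda^2\f g\to\f g$ is onto, and since its image is the commutator ideal, $\mu(\f g,\f g)=[\f g,\f g]=\f g$. This proves the first assertion.

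For the second assertion, recall that every nonzero solvable Lie algebra has proper commutator $[\f g,\f g]\subsetneq\f g$; combined with the contrapositive of the first part (the abelian case being excluded by hypothesis), this shows that a non-abelian solvable complex Lie group carries no left-invariant static metric. I do not expect a genuine obstacle here: the argument is essentially the chain $\Theta(g)$ scalar $\Rightarrow P_\mu$ scalar $\Rightarrow \mu\mu^*$ scalar and positive semi-definite $\Rightarrow \mu$ surjective. The only points needing care are keeping the bookkeeping of $\f g$ versus $\f g^{1,0}$ consistent (surjectivity of the bracket is insensitive to which picture one uses), and checking that $\lambda$ is strictly positive so as to rule out the abelian (flat) degeneracy.
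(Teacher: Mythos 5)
Your proof is correct and rests on the same two facts as the paper's: that $P_\mu=\mu\mu^*$ is positive semi-definite with image equal to that of $\mu$, and that $\tr P_\mu=\|\mu\|^2$ vanishes only when $\mu=0$. The paper phrases this as a contradiction argument (choosing $Z_1\perp\mu(\f g,\f g)$ to force $\lambda=0$) while you argue directly that $\lambda>0$ forces surjectivity, but the substance is identical.
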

	\begin{proof}
	The proof here follows that of \cite[Corollary 3.2]{pujia2020positive}. Suppose $(G,J)$ admits a left-invariant static metric $g$. If $\mu(\f g,\f g) \neq \f g$ then we may choose a $g$-unitary basis $\{Z_1 \dots Z_n\}$ such that $Z_1 \perp \mu(\f g,\f g)$. Then we see that,
	\[
	\lambda g(Z_1,Z_{\B 1}) = \Theta(g)(Z_1,Z_{\B 1}) = \sum_{i<j}|g(Z_1,\mu(Z_{\B i},Z_{\B j}))|^2 = 0.
	\]
	Thus, $\lambda = 0$ and so $\Theta(g) = 0$. But then by Corollary \ref{cor:PmuAdjForm},
	\[
	0 = \operatorname{tr}P_\mu = \|\mu\|^2.
	\]
	So $\mu = 0$, but $(G,J)$ is non-abelian which is a contradiction.
	\end{proof}
	
	More generally, we define an $\HCFp$-soliton as a metric $g$ satisfying
	\[
	\Theta(g) = \lambda g + \mathcal{L}_Zg,
	\]
	where $\lambda \in \mathbb{R}$ and $Z \in \Gamma(M,T^{1,0}M)$ is a holomorphic vector field. Here $\mathcal{L}$ is the usual Lie derivative. Solitons are called \textit{expanding}, \textit{shrinking}, or \textit{steady} if $\lambda$ is negative, positive or $0$ respectively.
	
	If $(g_t)_{t\in I}$ solves (\ref{eqn:HCF}) for some interval $0 \in I \subset \mathbb{R}$ with $g_0 = g$. Then the evolution is given by a combination of scaling by some smooth positive function $c\colon I \to \mathbb{R}_{> 0}$ and pull-back by biholomorphisms $\phi_t\colon M \to M$. That is,
	\[
	g_t = c(t)\phi_t^*g.
	\]
	Thus, solitons are ``geometric fixed points" of the $\HCFp$ (\ref{eqn:HCF}). These are candidate attractors for the normalised $\HCFp$. 
	
	\begin{rmk}
	By scale, and biholomorphism invariance of $\Theta(g)$, one can see that $c(t) = 1-\lambda t$. Thus, any soliton solution of (\ref{eqn:HCF}) that exists for all positive times must be expanding or steady ($\lambda \leq 0$).
	\end{rmk}
	
	A complex Lie group $G$ with left-invariant Hermitian metric $g$ is called a \textit{semi-algebraic} $\HCFp$-soliton if
	\[
	\Theta(g) = \lambda g + \frac 12(g(D\cdot,\cdot) + g(\cdot,D\cdot)),
	\]
	for some $\lambda \in \R$ and $D \in \operatorname{Der}(\f g)$ such that $[D,J] = 0$.
	
	Semi-algebraic solitons are in particular solitons in the usual sense, but the corresponding biholomorphisms driving the evolution are also Lie automorphisms of $G$. More precisely, let $\operatorname{Aut}(G,J)$ denote the automorphisms of the complex Lie group $(G,J)$ (i.e. Automorphisms of the real group $G$ that are also biholomorphisms). One can show that $g_t = (1-\lambda t)\varphi_t^*g$, where for each $t$, $\varphi_t \in \operatorname{Aut}(G,J)$ is the unique automorphism satisfying $(d\varphi_t)_e =  \exp(tD)\in \operatorname{Aut}(\f g,J)$. Recall that $G$ being simply-connected means that $\varphi_t \in \operatorname{Aut}(G,J)$ is uniquely determined by $(d{\varphi_t})_e$.
	In general it is not known whether any left-invariant soliton is semi-algebraic, however this can be shown to be true in the nilpotent case (cf. \cite{lauretHomNil01}).
	
	\begin{prop}
		\label{prop:nilpSolSemiAlg}
		If a complex simply-connected nilpotent Lie group with left-invariant metric, $(G,J,g)$ is a soliton, then it is a semi-algebraic soliton.
	\end{prop}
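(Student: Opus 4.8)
The plan is to follow the classical argument for Riemannian nilsolitons (cf.\ \cite{lauretHomNil01}), the essential geometric input being the rigidity of isometries of simply-connected nilmanifolds. Since $(G,J,g)$ is a soliton, the (left-invariant) solution of (\ref{eqn:HCF}) with $g_0=g$ has the form $g_t=c(t)\,\phi_t^*g$, where $c$ is a smooth positive function with $c(0)=1$ and $\{\phi_t\}$ is a smooth family of biholomorphisms of $(G,J)$ with $\phi_0=\id$. Post-composing with left translations we may assume $\phi_t(e)=e$ for all $t$: indeed $L_{\phi_t(e)^{-1}}\circ\phi_t$ is again a biholomorphism fixing $e$, and since $g$ is left-invariant it satisfies $(L_{\phi_t(e)^{-1}}\circ\phi_t)^*g=\phi_t^*g$, so the relation $g_t=c(t)\phi_t^*g$ is preserved.

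The first real step is to upgrade each $\phi_t$ to a Lie group automorphism of $(G,J)$. From $g_t=c(t)\phi_t^*g$ one reads off that $\phi_t\colon(G,g_t)\to(G,c(t)g)$ is an isometry fixing $e$ between two \emph{left-invariant} metrics on the simply-connected nilpotent Lie group $G$. By Wilson's theorem on isometry groups of simply-connected nilmanifolds (see \cite{lauretHomNil01} and the references therein) — every isometry fixing the identity is a Lie group automorphism, equivalently $\operatorname{Isom}(G,\cdot)=G\rtimes\bigl(\operatorname{Aut}(G)\cap\operatorname{O}(\f g)\bigr)$ — it follows that $\phi_t\in\operatorname{Aut}(G)$. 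Since $\phi_t$ is holomorphic and $J$ is left-invariant, $A_t:=(d\phi_t)_e$ commutes with $J$; hence $\phi_t\in\operatorname{Aut}(G,J)$ and $A_t\in\operatorname{Aut}(\f g,J)$ with $A_0=\id$.

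Finally I would differentiate at $t=0$. Because $t\mapsto A_t$ is a smooth curve in $\operatorname{Aut}(\f g)$ with $A_0=\id$, its derivative $\dot A_0$ is a derivation of $\f g$; and differentiating $A_tJ=JA_t$ gives $[\dot A_0,J]=0$. Set $D:=-2\dot A_0$, so $D\in\operatorname{Der}(\f g)$ with $[D,J]=0$. For an automorphism $\phi_t$ one checks that $\phi_t^*g$ is the left-invariant metric equal to $g(A_t\cdot,A_t\cdot)$ at $e$, so differentiating $g_t=c(t)\phi_t^*g$ at $t=0$ yields, as an identity of left-invariant symmetric $2$-tensors,
\[
\partial_tg_t|_{t=0}=\dot c(0)\,g+g(\dot A_0\cdot,\cdot)+g(\cdot,\dot A_0\cdot).
\]
On the other hand $\partial_tg_t|_{t=0}=-\Theta(g)$ by (\ref{eqn:HCF}). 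Putting $\lambda:=-\dot c(0)\in\R$ and rearranging gives
\[
\Theta(g)=\lambda g+\tfrac12\bigl(g(D\cdot,\cdot)+g(\cdot,D\cdot)\bigr),
\]
which is exactly the definition of a semi-algebraic $\HCFp$-soliton. (Arranging in addition that $D$ be $g$-self-adjoint is a strictly stronger statement; it is not needed here, and is handled separately in Theorem \ref{thm:nilpSolisAlg}.)

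The main obstacle is the second step — showing $\phi_t$ is a group automorphism — which rests entirely on Wilson's structure theorem for isometry groups of nilmanifolds. This is where nilpotency is essential: the analogous rigidity fails for general solvable groups, which is precisely why in the solvable setting one should not expect every soliton to be semi-algebraic. The remaining steps are routine: the smooth normalization $\phi_0=\id$, $\phi_t(e)=e$ follows from left-invariance of $g$; the derivative of a curve of automorphisms through $\id$ is always a derivation commuting with $J$; and the final tensor identity is just a rearrangement of the flow equation at $t=0$.
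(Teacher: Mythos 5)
Your proposal is correct and follows essentially the same route as the paper: normalize $\phi_t$ to fix the identity, invoke Wilson's rigidity theorem for isometries of simply-connected nilmanifolds to conclude $\phi_t\in\operatorname{Aut}(G)$, and differentiate at $t=0$ to produce the derivation $D$. The only cosmetic difference is that you cite the packaged statement of Wilson's theorem (isometries fixing $e$ between left-invariant metrics are automorphisms), whereas the paper derives this on the spot by conjugating the transitive nilpotent subgroup $G\subset\operatorname{Isom}(G)$ and applying the uniqueness part of Wilson's result.
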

	\begin{proof}
		The proof combines those in \cite[Prop 1.1]{lauretHomNil01} and \cite[Thm 3.1]{jabHomRicciSol15} but we state it here for convenience. Let $g_t$ solve (\ref{eqn:HCF}) starting at $g_0 = g$. Then $g_t = c(t)\phi_t^*g$. We may assume that $\phi_t$ fixes the identity $e$. If not, then by left-invariance of $g$, we may replace it with $L_{\phi_t(e)^{-1}}\circ \phi_t$. We claim that $\phi_t$ is an automorphism of $G$ for all $t$. Consider $G$ as a subset of $\operatorname{Isom}(G)$ by identifying $x\in G$ with $L_x \in \operatorname{Isom}(G)$. Then $G$ is a simply-connected nilpotent subgroup of $\operatorname{Isom}(G)$ acting transitively on $G$. Thus, $\phi_t G\phi_t^{-1} \subset \operatorname{Isom}(G)$ is also a simply-connected nilpotent subgroup and also acts transitively on $G$. It follows from \cite[Thm 2]{wilsonIsomGrpNil82} that $\phi_t G \phi_t^{-1} = G$. By evaluating at the identity we see that
		\[
		\phi_t L_x \phi_t^{-1} = L_{\phi_t(x)},
		\]
		and it follows that $\phi_t \in \operatorname{Aut}(G)$. Thus, since $\phi_0 = \id$, $D := (d\phi_0)_e $ must be a derivation. So evaluating at the identity,
		\[
		\Theta(g_0) = -\dot g_0 = -\dot c(0) g_0 - g_0(D\cdot,\cdot) - g_0(\cdot,D \cdot),
		\]
		and $(G,J,g = g_0)$ is a semi-algebraic soliton.
 	\end{proof}
	
	A semi-algebraic soliton is called an \textit{algebraic} $\HCFp$-soliton if $D\in \operatorname{Der}(\f g)$ can be chosen to be $g$-self-adjoint. That is,
	\[
	\Theta(g) = \lambda g + g(D\cdot,\cdot).
	\]
	In the varying brackets setting we can make the following definition.
	
	\begin{defn}
	A bracket $\mu \in \mathcal{C}$ is called a semi-algebraic soliton bracket if
	\[
	P_\mu = \lambda \operatorname{Id} + \frac 12(D + D^*),
	\] 
	for some $\lambda\in \mathbb{R}$ and $D \in \operatorname{Der}(\f g) \cap \f{gl}(\f g,J)$. $\mu$ is called \emph{algebraic} if $D = D^*$ or equivalently, if $D^* \in \operatorname{Der}(\f g)$.
	\end{defn}
	\begin{rmk}
		It is easy to see that $\mu \in \vcla$ is a semi-algebraic (resp. algebraic) soliton bracket if and only if the corresponding complex Lie group with left-invariant Hermitian metric $(G_\mu,J_\mu,g_\mu)$ is a semi-algebraic (resp. algebraic) soliton.
	\end{rmk}
	 
	Such a definition is useful because of the following
	\begin{prop}[\cite{arrLafHomPlu19}]
		\label{prop:algSol}
		A bracket $\mu \in \vcla$ is a semi-algebraic soliton if and only if it is a fixed point of some normalised, gauged bracket flow.
	\end{prop}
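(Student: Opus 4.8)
The plan is to prove both implications by translating between the purely algebraic decomposition of $P_\mu$ required by the semi-algebraic soliton condition and the stationarity of a normalised, gauged bracket flow. Three elementary facts drive everything. First, for $A \in \f{gl}(\f g,J)$ one has $\pi(A)\mu = 0$ if and only if $A \in \operatorname{Der}(\f g)\cap\f{gl}(\f g,J)$, since $\pi(A)\mu = 0$ is exactly the Leibniz identity for $A$ relative to $\mu$; in particular $\pi(\id)\mu = -\mu$. Second, by Corollary \ref{cor:PmuAdjForm}, $P_\mu = \mu\mu^*$ is self-adjoint. Third, since the background inner product is $J$-Hermitian we have $J^* = -J$, so $[D,J] = 0$ forces $[D^*,J] = 0$; hence whenever $D \in \operatorname{Der}(\f g)\cap\f{gl}(\f g,J)$ the skew part $\tfrac12(D^* - D)$ lies in $\f{u}(\f g,J)$, while the symmetric part $\tfrac12(D+D^*)$ is self-adjoint and commutes with $J$.

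\textbf{Semi-algebraic soliton $\Rightarrow$ fixed point.} Suppose $P_\mu = \lambda\,\id + \tfrac12(D + D^*)$ with $D\in\operatorname{Der}(\f g)\cap\f{gl}(\f g,J)$. Set $S_0 := \tfrac12(D^* - D)\in\f{u}(\f g,J)$, so that $P_\mu - S_0 = \lambda\,\id + D$ and therefore
\[
\pi(P_\mu - S_0)\mu = \lambda\,\pi(\id)\mu + \pi(D)\mu = -\lambda\mu .
\]
I would then extend $S_0$ to a smooth map $\vcla\ni\nu\mapsto S_\nu\in\f{u}(\f g,J)$ which equals $\tfrac{\bgm{\nu}{\nu}}{\bgm{\mu}{\mu}}\,S_0$ on the ray $\R_{>0}\,\mu$ (extended arbitrarily and smoothly off a neighbourhood of this ray). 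Using $P_{c\mu} = c^2 P_\mu$ (Corollary \ref{cor:PmuAdjForm}) and the linearity of $\pi(\cdot)$ in the bracket, one checks that $\nu_t := a(t)\mu$ solves the gauged bracket flow (\ref{eqn:gaugedBF}) precisely when the scalar $a$ solves $\dot a = \lambda a^3$, $a(0) = 1$; by uniqueness of ODE solutions this is the gauged bracket flow issuing from $\mu$. Hence that solution evolves purely by scaling, so after normalising by $c(t) := a(t)^{-1}$ — equivalently, taking the normalisation function $r(t)$ of the excerpt to be the constant $-\lambda$ — the bracket $\mu$ is a fixed point of the resulting normalised, gauged bracket flow.

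\textbf{Fixed point $\Rightarrow$ semi-algebraic soliton.} Conversely, suppose $\mu$ is a fixed point of a normalised, gauged bracket flow with gauge $\nu\mapsto S_\nu$ and (constant, along this stationary solution) normalisation value $r_0$; then $\pi\bigl(P_\mu - S_\mu + r_0\,\id\bigr)\mu = 0$. Since $P_\mu,S_\mu,\id\in\f{gl}(\f g,J)$, the first fact shows $D := P_\mu - S_\mu + r_0\,\id$ lies in $\operatorname{Der}(\f g)\cap\f{gl}(\f g,J)$, and with $\lambda := -r_0$ we get $P_\mu = \lambda\,\id + D + S_\mu$. Taking adjoints and using $P_\mu^* = P_\mu$ and $S_\mu^* = -S_\mu$ gives $P_\mu = \lambda\,\id + D^* - S_\mu$; comparing the two expressions forces $S_\mu = \tfrac12(D^* - D)$, hence $P_\mu = \lambda\,\id + \tfrac12(D + D^*)$, so $\mu$ is a semi-algebraic soliton bracket.

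The content is thus almost entirely formal, and the step I expect to be the main (though modest) obstacle is the bookkeeping around the normalisation in the forward direction: pinning down which curves $c(t)$, equivalently which normalisation functions $r(t)$, are admissible in the normalised gauged bracket flow, and checking that the scaling solution $a(t)\mu$ paired with $r(t)\equiv -\lambda$ is consistent with that normalisation. Here I would follow the conventions of \cite[\S 3.3]{lauretRicFlowHomMfd} and the analogous argument in \cite[\S 2]{arrLafHomPlu19}. Everything else is the linear algebra of splitting the self-adjoint endomorphism $P_\mu$ into its derivation part and its $\f{u}(\f g,J)$-part, carried out above.
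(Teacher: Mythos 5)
Your argument is correct and is essentially the standard one: the paper itself gives no proof, deferring to \cite{arrLafHomPlu19}, and your splitting of $P_\mu$ into $\lambda\,\id + D$ plus the skew part $\tfrac12(D^*-D)\in\f u(\f g,J)$ (using $P_\mu^*=P_\mu$, $\pi(\id)\mu=-\mu$, and $\ker(B\mapsto\pi(B)\mu)=\operatorname{Der}(\mu)$) is exactly the mechanism used there. The normalisation bookkeeping you flag is handled correctly by taking $r\equiv-\lambda$, so no gap remains.
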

	
	\begin{rmk}
		
		Observe that $\ker({B \mapsto \pi(B)\mu}) = \operatorname{Der}(\mu)$, and so algebraic solitons are precisely the scale-static solutions of the bracket flow (\ref{eqn:bracketFlow}) (that is, with no gauging). Indeed $\mu$ is an algebraic soliton if and only if $\pi(P_\mu)\mu = -\lambda \mu$.
	\end{rmk}
	
	In the case of Ricci flow, it was shown in \cite{jabRicSolAlg14} that all semi-algebraic solitons are in fact algebraic. The same is also true for the $\HCFp$ on $2$-step nilpotent complex Lie groups as shown in \cite{pujia2020positive}.

	\section{$\HCFp$ on complex nilpotent Lie groups}
	\label{sec:nilp} 
	In this section we collect results for the $\HCFp$ on complex simply-connected nilpotent Lie groups to prove our main result. First, we show that all left-invariant solitons in this setting are algebraic (see Theorem \ref{thm:nilpSolisAlg}). Next, we investigate the growth behaviour of the bracket flow in Theorem \ref{thm:BFNilpGaugedGrowth}. Finally, we analyse the bracket flow with suitable gauging and introduce an appropriate monotone quantity to prove Theorem \ref{thm:nilpConvThm} inductively.
	
	Let $(G^{2n},J,g)$ be a simply-connected, complex, nilpotent Lie group with left-invariant Hermitian metric $g$. Denote by $(\f g,\mu)$ the Lie algebra of $G$ with bracket $\mu 
	\in \f g \otimes \Lambda^2 \f g^*  $. As in Section \ref{sec:bracketFlow}, we define $\bgm{\cdot}{\cdot} := g(e)$, an inner product on $\f g$.
	
	To simplify notation later on, recall that we can naturally define the wedge product of endomorphisms, $\wedge \colon \f{gl}(\f g) \times \f{gl}(\f g) \to  \f {gl}(\Lambda^2 \f g)$, by
	\[
	A\wedge B(v\wedge w) := Av \wedge Bw + Bv \wedge Aw,
	\]
	for $v,w \in \f g$. Notice that this construction is linear in both arguments and symmetric. It is easily seen that
	\[
	(A\wedge B)\circ (C \wedge D) = AC \wedge BD + AD \wedge BC,
	\]
	for $A,B,C,D \in \f {gl}(\f g)$.

	One can also check that for $A,B \in \f {gl}(\f g)$, the adjoint of $A\wedge B \colon \Lambda^2 \f g \to \Lambda^2 \f g$ with respect to the natural inner product on $\Lambda^2 \f g$ is given by
	\[
	(A\wedge B)^* = A^* \wedge B^*.
	\]
	With this notation, the representation $\pi \in \f{gl}( \f g  \otimes \Lambda^2 \f g^* ) \otimes \f {gl}(\f g)^*$ defined in Section \ref{sec:bracketFlow} is given by
	\[
	\pi(A)\nu = A\nu - \nu \id \wedge A,
	\]
	for $A \in \f {gl}(\f g), \nu \in   \f g \otimes \Lambda^2 \f g^*$. 
	
	 For the rest of this section, we will consider the unitary decomposition
	\[
	\f g = \f z \oplus \f z^\perp,
	\]
	where $\f z = \f z(\mu)$ is the centre of $(\f g,\mu)$. Note that $\f z \neq \{0\}$ as $\mu$ is nilpotent. For a subspace $\f v \subset \f g$, let $\Pr_{\f v}\colon \f g \to \f v \subset \f g$ denote the orthogonal projection onto $\f v$ via the background metric $\bgm{\cdot}{\cdot}$. Using this we may write
	\[
	\mu = \mu_0 + \mu_1,
	\]
	where the brackets $\mu_0,\mu_1 \in   \f g \otimes \Lambda^2 \f g^*$ are defined by $\mu_0 := \Pr_{\f z}\mu$ and $\mu_1 := \Pr_{\f z^\perp} \mu$. Note that $\mu_0$ is a two step nilpotent Lie bracket, while the Lie algebra $(\f z^\perp,\mu_1|_{\f z^\perp \times \f z^\perp})$ is isomorphic to the quotient $(\f g,\mu)/\f z(\mu)$. Thus, $\mu_1$ has degree of nilpotency one less than that of $\mu$. This fact will be frequently used in the proofs to follow.
	
	\subsection{Nilpotent $\HCFp$ solitons}
	To begin proving Theorem \ref{thm:nilpConvThm}, we first need to characterise nilpotent $\HCFp$ solitons. Specifically, we will prove Theorem \ref{thm:nilpSolisAlg}. That is, all left-invariant solitons are algebraic. By doing this, we will see that fixed points of any normalised gauged bracket flow are also fixed points of the normalised bracket flow with no gauging.
	
	For this section, let $\{e_i\}_{i=1}^{n}\cup \{e_i:=Je_{i-n}\}_{i=n+1}^{2n}\subset \f g$ be an orthonormal basis of $\f g$. Moreover, for a bracket $\nu \in \f g \otimes \Lambda^2 \f g^*$, denote by 
	\[
		\operatorname{Der}(\nu) := \{D \in \f{gl}(\f g,J): D\nu(\cdot,\cdot) = \nu(D \cdot,\cdot) + \nu(\cdot,D\cdot)\}
	\]
	its set of derivations.
	
	Recall that a semi-algebraic soliton bracket $\mu \in \f g\otimes \Lambda^2 \f g^*$ is one which satisfies
	
	\[
	P_{\mu} = \lambda \id + \frac{1}{2}(D + D^*),
	\]
	where $D \in \operatorname{Der}(\mu)$. Our goal is to show that $D^* \in \operatorname{Der}(\mu)$ as well.
	
	We start with the most important lemma for this proof inspired from real geometric invariant theory (see the proof of \cite[Proposition 3.4]{HCFUni2019}).
	
	\begin{lem}
		\label{lem:GITcondn}
		Suppose $\mu$ is a semi-algebraic soliton bracket. That is,
		\[
		P_\mu = \lambda \id + \frac 12 (D+D^*),
		\]
		for $\lambda \in \R$ and $D \in \operatorname{Der}(\mu)$. Then,
		\[
		0 = \|\pi(D^*)\mu\|^2 + \operatorname{tr}(Q_\mu[D,D^*]),
		\]
		where $Q_\mu \in \f{gl}(\f g,J)$ is defined as
		\[
		\bgm{Q_\mu v}{w} := \sum_{i} \bgm{\mu(v\wedge e_i)}{\mu(w\wedge e_i)},
		\]
		for $v,w \in \f g$.
	\end{lem}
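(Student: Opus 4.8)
The plan is to run the standard ``semi-algebraic soliton $\Rightarrow$ algebraic soliton'' argument from real geometric invariant theory: the crucial point is that $P_\mu-Q_\mu$ is (an unnormalised) moment map for the $\operatorname{GL}(\f g,J)$-action on $\vcla$, so that the classical commutator identity for moment maps applies. Concretely, I would first establish the \emph{moment map identity}: for every $A\in\f{gl}(\f g,J)$ and every bracket $\nu\in\f g\otimes\Lambda^2\f g^*$,
\[
\bgm{\pi(A)\nu}{\nu}=\tr\!\big(A(P_\nu-Q_\nu)\big).
\]
Writing $\pi(A)\nu=A\nu-\nu\,(\id\wedge A)$ as in Section~\ref{sec:bracketFlow}, the first term gives $\bgm{A\nu}{\nu}=\tr(A\,\nu\nu^*)=\tr(AP_\nu)$ by Corollary~\ref{cor:PmuAdjForm}. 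For the second term I would expand $(\id\wedge A)(e_i\wedge e_j)=Ae_i\wedge e_j+e_i\wedge Ae_j$ in the orthonormal basis $\{e_i\}$, symmetrise the resulting double sum in $i$ and $j$, and recognise $\sum_j\bgm{\nu(e_k\wedge e_j)}{\nu(e_i\wedge e_j)}=\bgm{Q_\nu e_k}{e_i}$, which yields $\bgm{\nu\,(\id\wedge A)}{\nu}=\tr(AQ_\nu)$. (Taking $A=\id$, where $\pi(\id)\nu=-\nu$, gives the consistency check $\tr Q_\nu=2\|\nu\|^2$.)

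Next comes the commutator trick. Since $\pi$ is a Lie algebra representation, $\pi([A,B])=[\pi(A),\pi(B)]$, and one has the self-adjointness relation $\pi(A)^*=\pi(A^*)$, a one-line consequence of $(\id\wedge A)^*=\id\wedge A^*$. Taking $A=D$, $B=D^*$ and evaluating at $\mu$, the derivation hypothesis gives $\pi(D)\mu=0$ (recall $\operatorname{Der}(\mu)=\ker(B\mapsto\pi(B)\mu)$), so
\[
\bgm{\pi([D,D^*])\mu}{\mu}=\bgm{\pi(D)\pi(D^*)\mu}{\mu}-\bgm{\pi(D^*)\pi(D)\mu}{\mu}=\bgm{\pi(D^*)\mu}{\pi(D^*)\mu}=\|\pi(D^*)\mu\|^2,
\]
using $\pi(D)^*=\pi(D^*)$ in the middle step and $\pi(D)\mu=0$ in the last. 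On the other hand, applying the moment map identity with $A=[D,D^*]$ and substituting $P_\mu=\lambda\,\id+\frac12(D+D^*)$, one finds $\tr([D,D^*]P_\mu)=0$ (it is the trace of a commutator, and $\tr([D,D^*]D)=\tr([D,D^*]D^*)=0$ by cyclicity of the trace), whence $\bgm{\pi([D,D^*])\mu}{\mu}=-\tr([D,D^*]Q_\mu)=-\tr(Q_\mu[D,D^*])$. Comparing the two expressions for $\bgm{\pi([D,D^*])\mu}{\mu}$ gives $0=\|\pi(D^*)\mu\|^2+\tr(Q_\mu[D,D^*])$, as claimed.

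The main obstacle is the bookkeeping in the moment map identity: handling the wedge-of-endomorphisms notation and symmetrising the double sum so that the cross term collapses \emph{exactly} to $\tr(AQ_\nu)$ with no stray constant, and being careful that $\pi(A)^*=\pi(A^*)$ is taken with respect to the inner product actually in use (the real part of the background Hermitian product, in which the chosen basis $\{e_i\}_{i=1}^{2n}$ --- including the vectors $Je_i$ --- is orthonormal). Once that identity is in hand, the remaining steps are purely formal manipulations with traces and the representation property of $\pi$.
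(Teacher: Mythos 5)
Your proposal is correct and follows essentially the same route as the paper: both establish the moment map identity $\bgm{\pi(E)\mu}{\mu}=\tr((P_\mu-Q_\mu)E)$ by direct expansion in an orthonormal basis, then evaluate it at $E=[D,D^*]$, using that $\tr(P_\mu[D,D^*])=0$ on one side and the representation property together with $\pi(D^*)=\pi(D)^*$ and $\pi(D)\mu=0$ on the other. The consistency check $\tr Q_\nu=2\|\nu\|^2$ is a nice touch but not needed.
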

	\begin{proof}
		First we claim that for all $E \in \f{gl}(\f g)$,
		\[
		\tr((P_\mu - Q_\mu)E) = \langle \pi(E)\mu,\mu\rangle. 
		\]
		This is a well known fact related to the moment map of the $\operatorname{GL}(\f g)$ action on $\f g \otimes \Lambda^2 \f g^*$ from real geometric invariant theory (see e.g. \cite{bohm2017real}). But it is simple to see since
		\[
		\begin{split}
		\tr((P_\mu - Q_\mu)E) =& \sum_{k,i<j}\bgm{Ee_k}{\mu(e_i\wedge e_j)}\bgm{\mu(e_i\wedge e_j)}{e_k}\\
		&- \sum_{ik} \bgm{\mu(Ee_k \wedge e_i)}{\mu(e_k \wedge e_i)}\\
		=& \sum_{i<j}\bgm{E\mu(e_i\wedge e_j)}{\mu(e_i\wedge e_j)}\\
		&-\sum_{i < j} \bgm{\mu \id \wedge E (e_i\wedge e_j)}{\mu(e_i \wedge e_j)}\\
		=& \bgm{\pi(E)\mu}{\mu}.
		\end{split}
		\]
		Thus, on one hand since $P_\mu = \lambda \id + \frac{1}{2}(D+D^*)$,
		\[
		\tr (P_\mu - Q_\mu)[D,D^*] = -\tr Q_\mu[D,D^*].
		\]
		On the other hand
		\[
		\begin{split}
		\tr(P_\mu - Q_\mu)[D,D^*] =& \bgm{\pi([D,D^*])\mu}{\mu}\\
		=& \bgm{\pi(D)\pi(D^*)\mu}{\mu}\\
		=& \|\pi(D)^*\|^2,
		\end{split}
		\]
		where in the second line we used that $\pi$ is a Lie algebra representation and in the third we used the easily verifiable fact that $\pi(D^*) = \pi(D)^*$. The result is now clear.
	\end{proof}
	Notice that the quantity $\|\pi(D^*)\mu\|^2$ measures the failure of $D^*$ being a derivation. The problematic term is therefore one involving $Q_\mu$. We will handle this by induction. Let us now characterise derivations under the splitting $\f g = \f z \oplus \f z^\perp$.
	\begin{prop}
		\label{prop:derBlock}
		$D \in \operatorname{Der}(\mu = \mu_0 + \mu_1)$ if and only if
		\[
		D = \begin{pmatrix}
		D_{00}&D_{01}\\
		0&D_{11}
		\end{pmatrix},
		\]
		for maps $D_{00} \colon \f z \to \f z$, $D_{01}\colon \f z^{\perp} \to \f z$ and $D_{11}\colon \f z^\perp \to \f z^\perp$ satisfying
		\[
		D_{00}\mu_0 + D_{01}\mu_1 - \mu_0 \id \wedge  D_{11} = 0 \qquad \text{ and } \qquad D_{11}\in \operatorname{Der}(\mu_1).
		\]
	\end{prop}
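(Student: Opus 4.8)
The plan is to work directly with the defining identity $D\in\operatorname{Der}(\mu)\iff\pi(D)\mu=0$, i.e. $D\mu=\mu\circ(\id\wedge D)$ as maps $\Lambda^2\f g\to\f g$, and to decompose both sides along the orthogonal splitting $\f g=\f z\oplus\f z^\perp$. Two elementary observations set this up. First, since $\f z=\f z(\mu)$ is the centre, $\mu$ annihilates $\Lambda^2\f z$ and $\f z\wedge\f z^\perp$; consequently $\mu$, and with it $\mu_0=\Pr_{\f z}\mu$ and $\mu_1=\Pr_{\f z^\perp}\mu$, is determined by its restriction to $\Lambda^2\f z^\perp$, and moreover $\f z$ is $J$-invariant (as $\mu$ is complex), hence so is $\f z^\perp$. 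Second, every derivation preserves the centre: if $D\in\operatorname{Der}(\mu)$ and $X\in\f z$, then for all $Y$ one has $\mu(DX,Y)=D\mu(X,Y)-\mu(X,DY)=0$, so $DX\in\f z$. This already forces $D=\begin{pmatrix}D_{00}&D_{01}\\0&D_{11}\end{pmatrix}$, and $[D,J]=0$ together with the $J$-invariance of $\f z,\f z^\perp$ gives $[D_{11},J]=0$ (conversely, a block-upper-triangular $D$ whose blocks commute with $J$ commutes with $J$).

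Next I would substitute the block form into $D\mu=\mu\circ(\id\wedge D)$. On the left, $D\mu=D\mu_0+D\mu_1=D_{00}\mu_0+D_{01}\mu_1+D_{11}\mu_1$, where the first two terms are $\f z$-valued and the last is $\f z^\perp$-valued. On the right, evaluating $\mu\circ(\id\wedge D)$ on $v\wedge w$ with $v,w\in\f z^\perp$ and discarding the $\f z$-components of $Dv,Dw$ — which, once wedged against $\f z^\perp$, lie in $\f z^\perp\wedge\f z$ and are killed by $\mu$ — one gets $\mu\circ(\id\wedge D)=(\mu_0+\mu_1)\circ(\id\wedge D_{11})$ on $\Lambda^2\f z^\perp$, and $0$ on the complementary summands (again because any $\f z$-factor is killed by $\mu$). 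Comparing the $\f z^\perp$-valued parts yields $D_{11}\mu_1=\mu_1\circ(\id\wedge D_{11})$, i.e. $D_{11}\in\operatorname{Der}(\mu_1)$; comparing the $\f z$-valued parts yields $D_{00}\mu_0+D_{01}\mu_1-\mu_0\circ(\id\wedge D_{11})=0$, which is the first relation in the statement. The converse direction is obtained by reading this computation backwards: given the two conditions, the block-form $D$ satisfies $D\mu=\mu\circ(\id\wedge D)$ summand by summand.

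The argument is pure bookkeeping, with no estimates or deeper input; the only point that needs care is the reduction to $\Lambda^2\f z^\perp$. This rests precisely on the two facts above — $\mu$ kills everything with a $\f z$-factor, and $D$ preserves $\f z$ so that the spurious $\f z$-components produced by $D$ only ever get wedged against $\f z^\perp$ — after which the two component equations drop out immediately. I would also check in passing that $D_{11}$, viewed in $\f{gl}(\f g,J)$ by extending it by zero on $\f z$, genuinely lies in $\operatorname{Der}(\mu_1)$ in the sense used elsewhere in the paper (this is automatic since $\mu_1$ vanishes on $\f z\wedge\f g$), so that the statement is consistent with the later identification $(\f z^\perp,\mu_1|_{\f z^\perp})\cong(\f g,\mu)/\f z$.
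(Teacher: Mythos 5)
Your proof is correct and follows essentially the same route as the paper: show $D\f z\subset\f z$ to get the block form, then split $\pi(D)\mu=0$ into its $\f z$- and $\f z^\perp$-valued components using that $\mu$ kills anything with a $\f z$-factor. The only difference is that you spell out the reduction to $\Lambda^2\f z^\perp$ and the $J$-commutation more explicitly, which the paper leaves implicit.
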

	\begin{proof}
		If $D \in \operatorname{Der}(\mu)$ then $D \f z \subset \f z$ since $\mu(D \f z,\f g) = D\mu(\f z,\f g) - \mu(\f z,D \f g) = 0$. So indeed $D$ has the block form above. To see the relations between the blocks we simply observe that
		\[
		\begin{split}
		0 &= \pi(D)\mu \\
		&= \pi(D)\mu_0 + \pi(D)\mu_1\\
		&= D_{00}\mu_0 - \mu_0\id \wedge D_{11}+ D_{01}\mu_1 + \pi(D_{11})\mu_1,
		\end{split}
		\]
		equating the $\f z$ and $\f z^\perp$ parts immediately implies the result.
	\end{proof}
	\begin{cor}
		\label{cor:solitonBlock}
		$\mu = \mu_0 + \mu_1$ is a semi-algebraic soliton bracket if and only if there is a real number $\lambda \in \mathbb{R}$ and maps $D_{00} \colon \f z \to \f z$, $D_{01}\colon \f z^\perp \to \f z$ and $D_{11}\colon \f z^\perp \to \f z^\perp$ satisfying
		
		\begin{enumerate}
			\item[(i)] $D_{11}\in \operatorname{Der}(\mu_1)$,
			\item[(ii)] $D_{00}\mu_0 + D_{01}\mu_1 - \mu_0 \id \wedge  D_{11} = 0$,
			\item[(iii)] $\mu_i\mu_i^* = \lambda \id + \frac{1}{2}(D_{ii}+D_{ii}^*)$, $i \in \{0,1\}$ and,
			\item[(iv)] $\mu_0\mu_1^* = \frac{1}{2}D_{01}$
		\end{enumerate}
	\end{cor}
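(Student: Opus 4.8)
The plan is to read off both implications directly from Proposition~\ref{prop:derBlock} together with the identity $P_\mu = \mu\mu^*$ of Corollary~\ref{cor:PmuAdjForm}, the only real work being to decompose $\mu\mu^*$ into blocks along the splitting $\f g = \f z \oplus \f z^\perp$. The crucial structural input is that $\operatorname{im}\mu_0 \subseteq \f z$ and $\operatorname{im}\mu_1 \subseteq \f z^\perp$ by construction, so the adjoint $\mu_0^*\colon \f g \to \Lambda^2 \f g$ vanishes on $\f z^\perp$ while $\mu_1^*$ vanishes on $\f z$. Expanding
\[
P_\mu = \mu\mu^* = \mu_0\mu_0^* + \mu_0\mu_1^* + \mu_1\mu_0^* + \mu_1\mu_1^*,
\]
one checks that these four summands are exactly the four blocks of $P_\mu$ relative to $\f g = \f z \oplus \f z^\perp$ (each extended by zero on the complementary summand): $\mu_0\mu_0^*$ is the $\f z \to \f z$ block, $\mu_1\mu_1^*$ the $\f z^\perp \to \f z^\perp$ block, $\mu_0\mu_1^*$ the $\f z^\perp \to \f z$ block, and $\mu_1\mu_0^* = (\mu_0\mu_1^*)^*$ the $\f z \to \f z^\perp$ block.

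For the forward implication, suppose $\mu$ is a semi-algebraic soliton bracket, so $P_\mu = \lambda\id + \tfrac12(D + D^*)$ for some $\lambda \in \R$ and $D \in \operatorname{Der}(\mu)$. Proposition~\ref{prop:derBlock} puts $D$ in upper-triangular block form with blocks $D_{00}, D_{01}, D_{11}$, which already gives conditions (i) and (ii). Since $\f z$ is $J$-invariant (it is the centre of a complex Lie algebra), the splitting $\f g = \f z \oplus \f z^\perp$ is both orthogonal and $J$-compatible, so I may compare the two sides block by block: $\tfrac12(D + D^*)$ has diagonal blocks $\tfrac12(D_{ii} + D_{ii}^*)$ and off-diagonal blocks $\tfrac12 D_{01}$ (from $\f z^\perp$ to $\f z$) and $\tfrac12 D_{01}^*$. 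Matching these against the block decomposition of $P_\mu$ above yields (iii) for $i = 0, 1$ and (iv); the remaining $\f z \to \f z^\perp$ equation $\mu_1\mu_0^* = \tfrac12 D_{01}^*$ is merely the adjoint of (iv), hence automatic.

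Conversely, given $\lambda \in \R$ and ($J$-commuting) maps $D_{00}, D_{01}, D_{11}$ satisfying (i)--(iv), form the block-upper-triangular endomorphism $D$ of $\f g = \f z \oplus \f z^\perp$ with these blocks. Conditions (i) and (ii) are precisely the hypotheses of Proposition~\ref{prop:derBlock}, so $D \in \operatorname{Der}(\mu)$. Reassembling, conditions (iii), (iv) and the adjoint of (iv) say exactly that $\mu\mu^*$ and $\lambda\id + \tfrac12(D + D^*)$ agree on each of the four blocks, i.e.\ $P_\mu = \lambda\id + \tfrac12(D + D^*)$; thus $\mu$ is a semi-algebraic soliton bracket.

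There is no genuinely hard step here: once Proposition~\ref{prop:derBlock} and $P_\mu = \mu\mu^*$ are available, the argument is pure bookkeeping with $2 \times 2$ block matrices. The only point requiring a moment's attention is the vanishing behaviour of $\mu_0^*$ and $\mu_1^*$, which follows immediately from the orthogonality of $\f z$ and $\f z^\perp$ and the ranges of $\mu_0$ and $\mu_1$.
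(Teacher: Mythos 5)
Your proof is correct and takes essentially the same route as the paper, which simply invokes $P_\mu = \mu\mu^*$ (Corollary \ref{cor:PmuAdjForm}) together with Proposition \ref{prop:derBlock} and leaves the block bookkeeping implicit. You have merely written out the details the paper omits.
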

	\begin{proof}
		This follows from the fact that $P_\mu = \mu\mu^*$ (Corollary \ref{cor:PmuAdjForm}).
	\end{proof}
	Now we can rewrite Lemma \ref{lem:GITcondn} in terms of derivations written in block form with respect to the splitting $\f g = \f z \oplus \f z^\perp$.
	\begin{lem}
		\label{lem:GITBlock}
		If $\mu = \mu_0 + \mu_1$ is a semi-algebraic soliton for $\lambda \in \mathbb{R}$ and $D = \begin{pmatrix}
		D_{00}&D_{01}\\
		0&D_{11}
		\end{pmatrix}\in \operatorname{Der}(\mu)$, then
		\[
		0 = \|D_{00}^*\mu_0 - \mu_0 \id \wedge D_{11}^*\|^2 + \|D_{01}^*\mu_0 + \pi(D_{11}^*)\mu_1\|^2 + \bgm{\mu \id \wedge [D_{11},D_{11}^*]}{\mu}
		\]
	\end{lem}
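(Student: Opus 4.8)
The plan is to specialize the identity of Lemma~\ref{lem:GITcondn} to the block form of $D$ given by Proposition~\ref{prop:derBlock}, and then to recognize the two scalar terms $\|\pi(D^*)\mu\|^2$ and $\operatorname{tr}(Q_\mu[D,D^*])$ in block notation. First I would compute $\pi(D^*)\mu$. Since $D$ is upper-triangular with respect to $\f g = \f z \oplus \f z^\perp$, its adjoint $D^*$ is lower-triangular, with blocks $D_{00}^*$, $D_{01}^*\colon \f z \to \f z^\perp$, and $D_{11}^*$. Splitting $\mu = \mu_0 + \mu_1$ and using $\pi(A)\nu = A\nu - \nu\,\id\wedge A$, one expands $\pi(D^*)(\mu_0 + \mu_1)$ and sorts the resulting terms according to whether they land in $\f z$ or $\f z^\perp$. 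The key observation is that $\mu_0$ takes values in $\f z$ and $\mu_1$ takes values in $\f z^\perp$; combined with the block structure of $D^*$, the $\f z$-valued part of $\pi(D^*)\mu$ is precisely $D_{00}^*\mu_0 - \mu_0\,\id\wedge D_{11}^*$ (the term $\mu_1\,\id\wedge D^*$ contributes nothing in the $\f z$ direction once one checks indices carefully, and $D_{01}^*$ feeds $\f z$ into $\f z^\perp$ so it does not appear here), while the $\f z^\perp$-valued part is $D_{01}^*\mu_0 + \pi(D_{11}^*)\mu_1$. Since the decomposition $\f g = \f z \oplus \f z^\perp$ is orthogonal and the induced inner product on $\f g \otimes \Lambda^2\f g^*$ respects it, we get
\[
\|\pi(D^*)\mu\|^2 = \|D_{00}^*\mu_0 - \mu_0\,\id\wedge D_{11}^*\|^2 + \|D_{01}^*\mu_0 + \pi(D_{11}^*)\mu_1\|^2.
\]

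Next I would address the term $\operatorname{tr}(Q_\mu[D,D^*])$. Here the goal is to show $\operatorname{tr}(Q_\mu[D,D^*]) = -\bgm{\mu\,\id\wedge[D_{11},D_{11}^*]}{\mu}$, so that moving it to the other side of Lemma~\ref{lem:GITcondn} produces the claimed $+\bgm{\mu\,\id\wedge[D_{11},D_{11}^*]}{\mu}$. The cleanest route is to use the identity established inside the proof of Lemma~\ref{lem:GITcondn}, namely $\operatorname{tr}((P_\mu - Q_\mu)E) = \bgm{\pi(E)\mu}{\mu}$ for all $E$, applied to $E = [D,D^*]$. Since $P_\mu = \lambda\,\id + \tfrac12(D+D^*)$ and $\operatorname{tr}([D,D^*]) = 0$ and $\operatorname{tr}((D+D^*)[D,D^*]) = 0$ (as $[D,D^*]$ is traceless against the symmetric part — indeed $\operatorname{tr}(D[D,D^*]) = \operatorname{tr}(D^*[D,D^*]^*)^{\!*}$-type cancellations, or more simply $\operatorname{tr}((D+D^*)[D,D^*]) = \operatorname{tr}([D+D^*,D]D^*) = \operatorname{tr}([D^*,D]D^*)$ which pairs to zero), we get $\operatorname{tr}(P_\mu[D,D^*]) = 0$, hence $\operatorname{tr}(Q_\mu[D,D^*]) = -\bgm{\pi([D,D^*])\mu}{\mu}$. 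Then I expand $\pi([D,D^*])\mu = \pi(D)\pi(D^*)\mu - \pi(D^*)\pi(D)\mu$; since $D \in \operatorname{Der}(\mu)$ we have $\pi(D)\mu = 0$, so the second term vanishes and $\pi([D,D^*])\mu = \pi(D)\pi(D^*)\mu$. Thus $\bgm{\pi([D,D^*])\mu}{\mu} = \bgm{\pi(D^*)\mu}{\pi(D^*)\mu}$ using $\pi(D) = \pi(D^*)^*$ — but wait, this would just re-derive Lemma~\ref{lem:GITcondn}. Instead, the block computation is needed: I would directly expand $[D,D^*]$ in block form, observe that because $D$ is upper-triangular the $(1,1)$-block of $[D,D^*]$ involves $D_{00},D_{01}$ and the $(2,2)$-block is exactly $[D_{11},D_{11}^*]$ minus $D_{01}^*D_{01}$, and then use $\pi(A)\mu = A\mu - \mu\,\id\wedge A$ together with $\mu_0$ valued in $\f z$, $\mu_1$ valued in $\f z^\perp$, to see which pieces survive the pairing with $\mu$. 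After careful bookkeeping, the cross terms reorganize into the two square-norm terms already found plus the single term $\bgm{\mu\,\id\wedge[D_{11},D_{11}^*]}{\mu}$.

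The main obstacle I anticipate is the bookkeeping in the second step: correctly tracking how the off-diagonal block $D_{01}$ interacts with both $\mu_0$ and $\mu_1$ under the $\pi$-representation and the wedge operation, and confirming that all the terms not appearing in the final identity genuinely cancel rather than merely being reshuffled. A safer organizational strategy — which I would adopt — is to avoid expanding $[D,D^*]$ directly and instead work entirely from the decomposition $\|\pi(D^*)\mu\|^2 = \|D_{00}^*\mu_0 - \mu_0\,\id\wedge D_{11}^*\|^2 + \|D_{01}^*\mu_0 + \pi(D_{11}^*)\mu_1\|^2$ obtained above, combined with the block computation of $\operatorname{tr}(Q_\mu[D,D^*])$. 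For the latter, note $Q_\mu$ is block-diagonal-ish: since $\mu_0(\f z,\cdot) = 0$ one checks $Q_\mu$ preserves the splitting or at least that only the $\f z^\perp$–$\f z^\perp$ block of $[D,D^*]$ pairs nontrivially. Then $\operatorname{tr}(Q_\mu[D,D^*])$ reduces to a trace over $\f z^\perp$ against $[D_{11},D_{11}^*]$ (up to lower-order $D_{01}$ corrections that I expect to combine with the second square norm), and this trace equals $-\bgm{\mu\,\id\wedge[D_{11},D_{11}^*]}{\mu}$ by the same moment-map identity applied on the sub-Lie-algebra side — more precisely, by the general relation $\operatorname{tr}(Q_\mu E) = -\bgm{\mu\,\id\wedge E}{\mu} + \operatorname{tr}(P_\mu E)$ specialized to block-diagonal $E$ with vanishing $\lambda$-contribution and vanishing $\operatorname{tr}((D+D^*)E)$-contribution. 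Substituting everything into Lemma~\ref{lem:GITcondn} and rearranging then yields exactly the stated identity.
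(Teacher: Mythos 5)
Your overall strategy is the same as the paper's: specialize Lemma \ref{lem:GITcondn} to the block form of $D$ and expand both terms. But there are two concrete errors, and they sit exactly at the crux of the computation. First, your expansion of $\pi(D^*)\mu$ silently drops the contribution $-\mu_0\,\id\wedge D_{01}^* - \mu_1\,\id\wedge D_{01}^*$. The justification ``$D_{01}^*$ feeds $\f z$ into $\f z^\perp$ so it does not appear'' confuses the codomain of $D_{01}^*$ with where the resulting bracket term lands: since $D_{01}^*\colon\f z\to\f z^\perp$ (extended by zero on $\f z^\perp$), the map $\mu_i\,\id\wedge D_{01}^*$ is supported on $\f z\wedge\f z^\perp$ in the domain and is generally nonzero there, e.g.\ $v\wedge w\mapsto\mu_i(D_{01}^*v\wedge w)$ for $v\in\f z$, $w\in\f z^\perp$. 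These pieces are orthogonal to all the others (which are supported on $\Lambda^2\f z^\perp$), so the correct decomposition has four summands,
\[
\|\pi(D^*)\mu\|^2 = \|D_{00}^*\mu_0 - \mu_0\,\id\wedge D_{11}^*\|^2 + \|D_{01}^*\mu_0 + \pi(D_{11}^*)\mu_1\|^2 + \|\mu_0\,\id\wedge D_{01}^*\|^2 + \|\mu_1\,\id\wedge D_{01}^*\|^2,
\]
not the two you state.

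Second, your target $\tr(Q_\mu[D,D^*]) = -\bgm{\mu\,\id\wedge[D_{11},D_{11}^*]}{\mu}$ has the wrong sign and omits the $D_{01}$ correction. The moment-map identity you quote gives $\tr(Q_\mu E) = \bgm{\mu\,\id\wedge E}{\mu}$ with a \emph{plus} sign (take $E$ arbitrary and use $\bgm{E\mu}{\mu}=\tr(EP_\mu)$), and since the $(2,2)$-block of $[D,D^*]$ is $[D_{11},D_{11}^*]-D_{01}^*D_{01}$ while $Q_\mu$ kills $\f z$ and preserves $\f z^\perp$, the correct statement is
\[
\tr(Q_\mu[D,D^*]) = \bgm{\mu\,\id\wedge[D_{11},D_{11}^*]}{\mu} - \|\mu_0\,\id\wedge D_{01}^*\|^2 - \|\mu_1\,\id\wedge D_{01}^*\|^2 .
\]
The two negative square-norms here cancel exactly against the two terms you omitted from $\|\pi(D^*)\mu\|^2$ --- they do not ``combine with the second square norm'' as you guessed --- and after that cancellation Lemma \ref{lem:GITcondn} yields the stated identity with the correct sign on $\bgm{\mu\,\id\wedge[D_{11},D_{11}^*]}{\mu}$. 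As written, your two claims combined with Lemma \ref{lem:GITcondn} would instead produce $0 = (\text{two squares}) - \bgm{\mu\,\id\wedge[D_{11},D_{11}^*]}{\mu}$, which is not the lemma. (Your digression through $\bgm{\pi([D,D^*])\mu}{\mu} = \|\pi(D^*)\mu\|^2$ is harmless but, as you yourself note, circular.)
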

	\begin{proof}
		We will use Lemma \ref{lem:GITcondn}. First,
		\[
		\begin{split}
		\pi(D^*)\mu =& \pi \begin{pmatrix}
		D_{00}^*&0\\
		D_{01}^*&D_{11}^*
		\end{pmatrix}(\mu_0 + \mu_1) \\
		=& D_{00}^*\mu_0 - \mu_0 \id \wedge D_{11}^*\\
		&+ D_{01}^*\mu_0 + D_{11}^*\mu_1 - \mu_1 \id \wedge D_{11}^*\\
		&- \mu_0 \id \wedge D_{01}^* - \mu_1\id \wedge D_{01}^*. 
		\end{split}
		\]
		Thus,
		\[
		\begin{split}
		\|\pi(D^*)\mu\|^2 =& \|D_{00}^*\mu_0 - \mu_0 \id \wedge D_{11}^*\|^2 + \|D_{01}^*\mu_0 + D_{11}^*\mu_1 - \mu_1 \id \wedge D_{11}^*\|^2\\
		&+\|\mu_0\id \wedge D_{01}^*\|^2 + \|\mu_1\id\wedge D_{01}^*\|^2.
		\end{split}
		\]
		Now, notice that $Q_\mu \f z = 0$, $Q_\mu \f z^\perp \subset \f z^\perp$, and
		\[
		[D,D^*]=\begin{pmatrix}
		*&*\\
		*&[D_{11},D_{11}^*]-D_{01}^*D_{01}.
		\end{pmatrix}
		\]
		So,
		\[
		\begin{split}
		\tr(Q_\mu [D,D^*]) =& \tr(Q_\mu [D_{11},D_{11}^*])-\tr(Q_\mu D_{01}^*D_{01})\\
		=& \tr(Q_\mu[D_{11},D_{11}^*]) - \sum_{ij} \bgm{\mu(D_{01}^*D_{01}e_i\wedge e_j)}{\mu(e_i \wedge e_j)}\\
		=& \tr(Q_\mu[D_{11},D_{11}^*]) - \sum_{i<j} \bgm{\mu^*\mu(D_{01}^*D_{01}e_i\wedge e_j)}{e_i \wedge e_j}\\
		&- \sum_{i<j} \bgm{\mu^*\mu(D_{01}^*D_{01}e_j\wedge e_i)}{e_j \wedge e_i}\\
		=&  \tr(Q_\mu[D_{11},D_{11}^*]) - \tr(\mu^*\mu \id \wedge D_{01}^*D_{01})\\
		=& \tr(Q_\mu[D_{11},D_{11}^*]) - \tr(\mu^*\mu \id \wedge D_{01}^*\id\wedge D_{01})\\
		&+ \tr(\mu^*\mu D_{01}\wedge D_{01}^*)\\
		=& \tr(Q_\mu[D_{11},D_{11}^*]) - \|\mu \id \wedge D_{01}^*\|^2\\
		=& \tr(Q_\mu[D_{11},D_{11}^*]) - \|\mu_0 \id \wedge D_{01}^*\|^2 - \|\mu_1 \id \wedge D_{01}^*\|^2,
		\end{split}
		\]
		where we have used the fact that $A\wedge B \circ C \wedge D = AC\wedge BD + AD\wedge BC$ and $(A\wedge B)^* = A^* \wedge B^*$. Adding the two terms, the result now easily follows from Lemma \ref{lem:GITcondn}.
	\end{proof}
	
	\begin{thm}
		\label{thm:nilpSolisAlg}
		Any complex, nilpotent, left-invariant soliton $(G,J,g)$ is algebraic.
	\end{thm}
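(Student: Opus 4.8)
The plan is to prove the theorem by induction on the step length $k$ of the nilpotent Lie algebra $(\f g, \mu)$, using Lemma \ref{lem:GITBlock} as the engine. The base case $k = 1$ is trivial (abelian, $\mu = 0$), and the $2$-step case is known from \cite{pujia2020positive}; more importantly, recall that $\mu_1$ (the $\f z^\perp$-part of $\mu$) has step length one less than $\mu$, and $(\f z^\perp, \mu_1|_{\f z^\perp\times\f z^\perp})$ is, after forgetting the $\f z$-valued component, isomorphic to $(\f g,\mu)/\f z(\mu)$. So the inductive hypothesis should be applied to $\mu_1$. The subtlety is that $\mu_1$ is not literally a soliton on $\f z^\perp$ — Corollary \ref{cor:solitonBlock}(iii) says $\mu_1\mu_1^* = \lambda\id + \tfrac12(D_{11}+D_{11}^*)$ with $D_{11}\in\operatorname{Der}(\mu_1)$, so $\mu_1$ \emph{is} a semi-algebraic soliton bracket with the same $\lambda$; so the inductive hypothesis gives us that $\mu_1$ is algebraic, i.e. we may assume $D_{11}$ has been chosen self-adjoint, equivalently $D_{11}^* \in \operatorname{Der}(\mu_1)$, equivalently $\pi(D_{11}^*)\mu_1 = 0$.

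With that in hand, I would run Lemma \ref{lem:GITBlock}. Its right-hand side is a sum of three terms: $\|D_{00}^*\mu_0 - \mu_0\,\id\wedge D_{11}^*\|^2$, $\|D_{01}^*\mu_0 + \pi(D_{11}^*)\mu_1\|^2$, and $\bgm{\mu\,\id\wedge[D_{11},D_{11}^*]}{\mu}$. Using the inductive hypothesis $\pi(D_{11}^*)\mu_1 = 0$ and, since $D_{11}$ is now self-adjoint, $[D_{11},D_{11}^*] = 0$, the last two terms vanish identically, leaving
\[
0 = \|D_{00}^*\mu_0 - \mu_0\,\id\wedge D_{11}^*\|^2.
\]
Hence $D_{00}^*\mu_0 = \mu_0\,\id\wedge D_{11}^*$. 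I then want to conclude that $D^* \in \operatorname{Der}(\mu)$. By Proposition \ref{prop:derBlock}, $D^* = \begin{pmatrix} D_{00}^* & 0 \\ D_{01}^* & D_{11}^*\end{pmatrix}$ would be a derivation provided (after conjugating to make it upper-triangular, i.e. working with $D^*$ which has its off-diagonal block in the lower-left — note $\f z$ is still $D^*$-invariant so $D^*\f z^\perp \not\subset \f z^\perp$ in general, but the relevant Proposition \ref{prop:derBlock} is stated for upper-triangular blocks, so I should apply it to the reflected splitting or simply re-derive) the two conditions $D_{00}^*\mu_0 + D_{01}^*\mu_1 - \mu_0\,\id\wedge D_{11}^* = 0$ and $D_{11}^* \in \operatorname{Der}(\mu_1)$ hold. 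The second is exactly the inductive hypothesis. The first requires $D_{00}^*\mu_0 - \mu_0\,\id\wedge D_{11}^* = -D_{01}^*\mu_1$; but we have just shown the left side is $0$, so I need $D_{01}^*\mu_1 = 0$. This is where Corollary \ref{cor:solitonBlock}(iv), $\mu_0\mu_1^* = \tfrac12 D_{01}$, should enter: it pins down $D_{01}$ and hence $D_{01}^*$ in terms of $\mu_0,\mu_1$, and one computes $D_{01}^*\mu_1 = 2\mu_1\mu_0^*\mu_1$; I would argue this vanishes, likely because $\mu_0$ takes values in $\f z$ while $\mu_1$ evaluated on $\Lambda^2\f z^\perp$ lands in $\f z^\perp$ and the adjoint/range considerations force orthogonality — essentially $\operatorname{Im}\mu_0 \subset \f z$ and $\mu_1$ vanishes on anything involving $\f z$, so $\mu_0^*\mu_1$ feeds $\mu_1$ a bracket-argument that it kills.

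The main obstacle I anticipate is precisely this last bookkeeping step: confirming that the cross-term $D_{01}^*\mu_1 = 0$ (or that whatever residual condition survives) is actually forced, rather than merely plausible, from the soliton relations in Corollary \ref{cor:solitonBlock} together with the structural facts $Q_\mu\f z = 0$, $\operatorname{Im}\mu_0\subset\f z$, and $\mu_1$ being essentially the quotient bracket. A secondary technical point is making sure Lemma \ref{lem:GITBlock} is being applied with the \emph{same} $\lambda$ at each stage so that the induction is coherent — this is fine because $\lambda$ only depends on $\operatorname{tr} P_\mu$ and the decomposition respects traces, but it should be stated. Once $D^*\in\operatorname{Der}(\mu)$ is established, algebraicity is immediate: $P_\mu = \lambda\id + \tfrac12(D+D^*)$ with both $D,D^*$ derivations, so replacing $D$ by the self-adjoint derivation $\tfrac12(D+D^*)$ exhibits $\mu$ as an algebraic soliton, i.e. $\Theta(g) = \lambda g + g(\tilde D\cdot,\cdot)$ with $\tilde D = \tfrac12(D+D^*)$ a $g$-self-adjoint derivation. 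Combined with Proposition \ref{prop:nilpSolSemiAlg}, which reduces the general soliton case to the semi-algebraic one, this completes the proof.
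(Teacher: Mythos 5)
Your overall architecture --- reduce to semi-algebraic via Proposition \ref{prop:nilpSolSemiAlg}, then induct on the nilpotency step using Corollary \ref{cor:solitonBlock} to feed $\mu_1$ into the inductive hypothesis and Lemma \ref{lem:GITBlock} to close the argument --- is exactly the paper's. But two steps in the middle are wrong as written. First, you cannot ``assume $D_{11}$ has been chosen self-adjoint'': $D_{11}$ is the $(1,1)$-block of the \emph{fixed} derivation $D$ of the full bracket $\mu$, and replacing it by $\tfrac12(D_{11}+D_{11}^*)$ would require re-verifying condition (ii) of Corollary \ref{cor:solitonBlock} (i.e.\ that the modified $D$ is still a derivation of $\mu$), which is not automatic. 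Moreover ``$D_{11}=D_{11}^*$'' and ``$D_{11}^*\in\operatorname{Der}(\mu_1)$'' are not equivalent statements about a fixed $D_{11}$. What you actually need to kill the third term of Lemma \ref{lem:GITBlock} is $[D_{11},D_{11}^*]=0$, and this must be \emph{proved}: using $\pi(D_{11})\mu_1=\pi(D_{11}^*)\mu_1=0$ (the latter from the inductive hypothesis) and $P_{\mu_1}=\mu_1\mu_1^*$, one computes $[D_{11},D_{11}^*]=2[D_{11},\mu_1\mu_1^*]=2\mu_1\,\id\wedge D_{11}\,\mu_1^*-2\mu_1(\mu_1\,\id\wedge D_{11}^*)^*=0$. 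This is a short but genuinely necessary computation, not a consequence of algebraicity of $\mu_1$ alone.

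Second, and more seriously, you discard the term $\|D_{01}^*\mu_0+\pi(D_{11}^*)\mu_1\|^2$ by saying it ``vanishes identically'' once $\pi(D_{11}^*)\mu_1=0$. It does not vanish; it becomes $\|D_{01}^*\mu_0\|^2$, and its forced vanishing is precisely the information you are missing at the end. Indeed $D_{01}^*\mu_0=0$ together with $D_{01}=2\mu_0\mu_1^*$ (Corollary \ref{cor:solitonBlock}(iv)) and $\ker\mu_0^*\mu_0=\ker\mu_0$ gives $D_{01}=0$. You need $D_{01}=0$ outright, not merely $D_{01}^*\mu_1=0$: since every derivation of $\mu$ preserves the centre $\f z$ (which is intrinsic to $\mu$, so there is no ``reflected splitting'' to appeal to), $D^*\in\operatorname{Der}(\mu)$ forces $D^*\f z\subset\f z$, i.e.\ the lower-left block $D_{01}^*\colon\f z\to\f z^\perp$ must itself be zero; equivalently, expanding $\pi(D^*)\mu$ produces the extra terms $\mu_0\,\id\wedge D_{01}^*$ and $\mu_1\,\id\wedge D_{01}^*$, which your condition $D_{01}^*\mu_1=0$ does not control. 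Once these two points are repaired --- prove normality of $D_{11}$, and retain the second squared term to deduce $D_{01}=0$ --- the rest of your argument (concluding $D_{00}^*\mu_0=\mu_0\,\id\wedge D_{11}^*$, invoking Proposition \ref{prop:derBlock} for the now block-diagonal $D^*$, and replacing $D$ by $\tfrac12(D+D^*)$) goes through and coincides with the paper's proof.
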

	\begin{proof} By Proposition \ref{prop:nilpSolSemiAlg}, all left-invariant solitons are semi-algebraic. Thus, it suffices to show that all semi-algebraic solitons are algebraic. Suppose $\mu \in \vcla$ is nilpotent and satisfies
		\[
		P_\mu = \lambda \id + \frac 12 (D + D^*),
		\]
		for $\lambda \in \R$ and $D \in \operatorname{Der}(\mu)$. We will show that $D^*\in \operatorname{Der}(\mu)$. The proof will be by induction on the degree of nilpotency of $\mu$. The base case when $\mu = 0$ is trivial, since $\operatorname{Der}(0) = \f{gl}(\f g)$.	
		
		Now suppose that the conclusion holds for all $(k-1)$-step nilpotent brackets for some $k \geq 2$, and suppose $\mu = \mu_0 + \mu_1$ is $k$-step nilpotent. Then, $\mu_1$ is $(k-1)$-step nilpotent and by Corollary \ref{cor:solitonBlock}, it satisfies
		\[
		P_{\mu_1} = \lambda \id + \frac 12 (D_{11} + D_{11}^*),
		\]
		for $D_{11}\in \operatorname{Der}(\mu_1)$. Thus, by the inductive hypothesis, $D_{11}^* \in \operatorname{Der}(\mu_1)$. We see that $D_{11}$ is also normal. Indeed, since
		\[
		0 = \pi(D_{11})\mu_1 = \pi(D_{11}^*)\mu_1,
		\]
		we have
		\[
		\begin{split}
		[D_{11},D_{11}^*] &= 2[D_{11},P_{\mu_1}]\\
		&= 2D_{11}\mu_1\mu_1^* - 2\mu_1\mu_1^*D_{11}\\
		&= 2\mu_1 \id \wedge D_{11} \mu_1^* - 2\mu_1 (D_{11}^*\mu_1)^*\\
		&= 2\mu_1 \id \wedge D_{11} \mu_1^* - 2 \mu_1(\mu_1\id \wedge D_{11}^*)^*\\
		&= 0,
		\end{split}
		\]
		where we used the fact that $\pi(A)\nu = A\nu - \id \wedge A$. Thus, by Lemma \ref{lem:GITBlock},
		\[
		\|D_{00}^*\mu_0 - \mu_0 \id \wedge D_{11}^*\|^2 + \|D_{01}^*\mu_0 + \pi(D_{11}^*)\mu_1\|^2 = 0.
		\]
		Since $D_{11}\in \operatorname{Der}(\mu_1)$, we get that
		\[
		D_{00}^*\mu_0 - \mu_0 \id \wedge D_{11}^* = 0,
		\]
		and,
		\[
		D_{01}^*\mu_0 = 0.
		\]
		So $0 = \mu_0^*D_{01} = 2 \mu_0 \mu_0^* \mu_1,$
		which implies $D_{01} = 2 \mu_0^*\mu_1 = 0$ since $\ker \mu_0^*\mu_0 = \ker \mu_0$. Thus, by Proposition \ref{prop:derBlock}, $D^*$ is a derivation of $\mu$ and the result follows by induction.
	\end{proof}
	
	\subsection{Long time behaviour of the $\HCFp$ on nilpotent Lie groups}
	
	We will now investigate the asymptotic behaviour of the bracket flow on nilpotent Lie groups. We first describe the growth behaviour of brackets evolving under the bracket flow. In particular, we will see that they converge to $0$. Then, we prove Theorem \ref{thm:nilpConvThm}. Both proofs will use induction on the degree of nilpotency of the brackets involved.
	
	Recall that under the splitting $\f g = \f z \oplus \f z^\perp$, we write
	\[
	\mu = \mu_0 + \mu_1,
	\]
	where $\mu_0 = \Pr_{\f z}\mu$ and $\mu_1 = \Pr_{\f z^\perp}\mu$. By Corollary \ref{cor:PmuAdjForm},
	\[
	P_\mu = \mu_0\mu_0^* + \mu_0\mu_1^* + \mu_1\mu_0^* + \mu_1\mu_1^*.
	\]
	Notice that $P_\mu$ does not preserve the centre, and so in general, the bracket flow will not preserve the splitting $\f z \oplus \f z^\perp$. To overcome this, we introduce the following gauge map
	\[
	S_\nu := \nu_1\nu_0^* - \nu_0\nu_1^*
	\]
	for $\nu \in \vcla$.
	\begin{lem}
		\label{lem:gauging}
		$S_{\nu} \in \f u(\f g,J)$ for all $\nu \in \vcla$.
	\end{lem}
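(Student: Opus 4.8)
The plan is to check directly the two conditions defining membership in $\f u(\f g,J)$: that $S_\nu$ is skew-adjoint for $\bgm{\cdot}{\cdot}$, and that $[S_\nu,J]=0$. Skew-adjointness is immediate and uses nothing about $\vcla$: since $(AB)^*=B^*A^*$ and $(A^*)^*=A$, we have $(\nu_1\nu_0^*)^*=\nu_0\nu_1^*$, whence $S_\nu^*=\nu_0\nu_1^*-\nu_1\nu_0^*=-S_\nu$.

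For $[S_\nu,J]=0$, the starting observation is that the centre $\f z=\f z(\mu)$ is $J$-invariant: if $X\in\f z$ then $\mu(JX,\cdot)=J\mu(X,\cdot)=0$, so $JX\in\f z$. As $\bgm{\cdot}{\cdot}$ is $J$-Hermitian, $\f z^\perp$ is $J$-invariant as well, and therefore the orthogonal projections $\Pr_{\f z}$ and $\Pr_{\f z^\perp}$ commute with $J$. Consequently, for every $\nu\in\vcla$ the truncations $\nu_0=\Pr_{\f z}\circ\nu$ and $\nu_1=\Pr_{\f z^\perp}\circ\nu$ still satisfy $\nu_i(J\cdot,\cdot)=J\nu_i(\cdot,\cdot)$; they need not satisfy the Jacobi identity, but that plays no role here.

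Next I would pass to the complexification $\f g\otimes\C=\f g^{1,0}\oplus\B{\f g^{1,0}}$. Inserting $(1,0)$- and $(0,1)$-vectors into the identity $\alpha(J\cdot,\cdot)=J\alpha(\cdot,\cdot)$ satisfied by any complex bracket $\alpha$ shows that $\alpha(\Lambda^2\f g^{1,0})\subseteq\f g^{1,0}$, $\alpha(\Lambda^2\B{\f g^{1,0}})\subseteq\B{\f g^{1,0}}$, and $\alpha$ kills the mixed component of $\Lambda^2(\f g\otimes\C)$. Since the induced inner product on $\Lambda^2\f g$ is compatible with this bidegree splitting (as in the proof of Corollary \ref{cor:PmuAdjForm}, where $\{Z_i\wedge Z_j\}_{i<j}$ is a unitary frame of $\Lambda^2\f g^{1,0}$), the adjoint $\alpha^*$ also respects it: $\alpha^*(\f g^{1,0})\subseteq\Lambda^2\f g^{1,0}$ and $\alpha^*(\B{\f g^{1,0}})\subseteq\Lambda^2\B{\f g^{1,0}}$. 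Applying this with $\alpha=\nu_0$ and $\alpha=\nu_1$, both $\nu_1\nu_0^*$ and $\nu_0\nu_1^*$ preserve $\f g^{1,0}$ and $\B{\f g^{1,0}}$, hence commute with $J$; therefore so does $S_\nu$. Combined with skew-adjointness, this yields $S_\nu\in\f u(\f g,J)$.

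The only point requiring a little care is the passage to $\nu_0,\nu_1$, which relies on $\Pr_{\f z}$ and $\Pr_{\f z^\perp}$ being $J$-linear so that these brackets remain complex; beyond that the argument is purely formal manipulation with adjoints and the $(1,0)/(0,1)$ decomposition, so I do not expect any genuine obstacle, and the whole proof is short.
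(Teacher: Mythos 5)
Your proposal is correct and follows essentially the same route as the paper: skew-adjointness is immediate from $(\nu_1\nu_0^*)^*=\nu_0\nu_1^*$, and the key point for $[S_\nu,J]=0$ is, exactly as in the paper, that $J$ preserves $\f z$ and $\f z^\perp$ so that $\Pr_{\f z}$ and $\Pr_{\f z^\perp}$ are $J$-linear and $\nu_0,\nu_1$ remain complex brackets. The only cosmetic difference is that you verify the final commutation via the $(1,0)/(0,1)$ bigrading of $\Lambda^2(\f g\otimes\C)$ and its compatibility with adjoints, whereas the paper invokes the identity $J\nu=\tfrac12\,\nu\,\id\wedge J$ and takes adjoints directly; both are routine and equivalent.
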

	\begin{proof}
	Anti-symmetry of $S_\nu$ is clear. To show that $[S_\nu,J] = 0$, notice that $\nu$ is a complex Lie bracket, so $\nu(J\f z,\cdot) = J \nu(\f z,\cdot) = 0$. Thus $J \f z \subset \f z$. Moreover since $J \in \f{so}(\f g, \bgm{\cdot}{\cdot})$, it follows that $J\f z^\perp \subset \f z^\perp$. Thus, $J$ commutes with the respective projection operators. The result now follows by direct computation using the fact that $J\nu = \frac{1}{2}\nu \id \wedge J$.
	\end{proof}
	We can now study the gauged bracket flow equation
	\begin{equation}
	\label{eqn:nilpGaugedBF}
	\dot \nu = -\pi(P_\nu - S_\nu)\nu,\qquad \nu(0) = \mu.
	\end{equation}
	Using this, we get 
	\begin{lem} 
	\label{lem:gaugedBFNilpSplit}	
	Let $\nu(t)$ be the solution to the gauged bracket flow (\ref{eqn:nilpGaugedBF}) with initial condition $\mu = \mu_0 + \mu_1$. Then $\f z (\nu(t)) = \f z(\mu) = \f z$ for all times and $\nu(t) = \nu_0(t) + \nu_1(t)$ where $\nu_0 := \Pr_{\f z} \nu$ and $\nu_1 := \Pr_{\f z^\perp}\nu$ solve
	\begin{equation}
	\label{eqn:gaugedBFNilpSplit}
	\begin{cases}
	\dot \nu_0 = \nu_0(\id \wedge \nu_1\nu_1^* - 2\nu_1^*\nu_1 - \nu_0^*\nu_0), \qquad & \nu_0(0) = \mu_0,\\
	\dot \nu_1 = -\pi(P_{\nu_1})\nu_1, \qquad & \nu_1(0) = \mu_1.
	\end{cases}
	\end{equation}
	\end{lem}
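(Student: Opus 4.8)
The plan is to push the whole statement through the orthogonal splitting $\f g = \f z \oplus \f z^\perp$. First I would rewrite the driving operator: by Corollary \ref{cor:PmuAdjForm}, $P_\nu = \nu\nu^* = \nu_0\nu_0^* + \nu_0\nu_1^* + \nu_1\nu_0^* + \nu_1\nu_1^*$, and the gauge $-S_\nu = -\nu_1\nu_0^* + \nu_0\nu_1^*$ is engineered precisely to cancel the ``lower'' block $\nu_1\nu_0^*$, giving
\[
P_\nu - S_\nu = \nu_0\nu_0^* + 2\,\nu_0\nu_1^* + \nu_1\nu_1^*.
\]
Since the image of $\nu_0$ lies in $\f z$ and that of $\nu_1$ in $\f z^\perp$ (equivalently, $\nu_0^*$ kills $\f z^\perp$ and $\nu_1^*$ kills $\f z$), the three summands are, respectively, an endomorphism of $\f z$, a map $\f z^\perp \to \f z$, and an endomorphism of $\f z^\perp$, each extended by zero; hence $P_\nu - S_\nu$ is block upper-triangular with respect to $\f g = \f z \oplus \f z^\perp$, so $(P_\nu - S_\nu)\f z \subseteq \f z$, and therefore $\id \wedge (P_\nu - S_\nu)$ preserves the subspace $\f z \wedge \f g \subseteq \Lambda^2\f g$.

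Next I would show that the linear subspace $W := \{ \nu \in \f g \otimes \Lambda^2\f g^* : \nu(\f z \wedge \f g) = 0 \}$ is invariant under (\ref{eqn:nilpGaugedBF}). Writing $B := P_\nu - S_\nu$ and $\pi(B)\nu = B\nu - \nu\,(\id \wedge B)$, for $\nu \in W$, $z \in \f z$ and $x \in \f g$ one gets $\bigl(\pi(B)\nu\bigr)(z \wedge x) = B\,\nu(z \wedge x) - \nu\bigl((\id \wedge B)(z \wedge x)\bigr) = 0$, because $\nu$ annihilates $z \wedge x$ and $(\id \wedge B)(z \wedge x) \in \f z \wedge \f g$. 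Thus the right-hand side of (\ref{eqn:nilpGaugedBF}) is tangent to the linear subspace $W$, and since $\mu \in W$, uniqueness of ODE solutions gives $\f z \subseteq \f z(\nu(t))$ for all $t$. To promote this inclusion to an equality I would use Theorems \ref{thm:BracketFlowEquiv}--\ref{thm:gaugedBracketFlowEquiv}: the complex Lie group $(G_{\nu(t)}, J_{\nu(t)})$ is equivariantly biholomorphic, hence isomorphic as a complex Lie group, to $(G, J)$, so $\nu(t)$ is isomorphic to $\mu$ as a complex Lie algebra and $\dim \f z(\nu(t)) = \dim \f z$; together with $\f z \subseteq \f z(\nu(t))$ this forces $\f z(\nu(t)) = \f z$. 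In particular $\nu(t) = \nu_0(t) + \nu_1(t)$ with $\nu_0 := \Pr_{\f z}\nu$, $\nu_1 := \Pr_{\f z^\perp}\nu$, and both $\nu_0(t)$, $\nu_1(t)$ annihilate $\f z \wedge \f g$.

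Finally I would apply $\Pr_{\f z}$ and $\Pr_{\f z^\perp}$ to $\dot\nu = -(P_\nu - S_\nu)\nu + \nu\,\bigl(\id \wedge (P_\nu - S_\nu)\bigr)$. Feeding in the block description above, the vanishing of $\nu_0, \nu_1$ on $\f z \wedge \f g$, the identities $A \wedge B \circ C \wedge D = AC \wedge BD + AD \wedge BC$ and $(A \wedge B)^* = A^* \wedge B^*$, and $P_{\nu_1} = \nu_1\nu_1^*$ (Corollary \ref{cor:PmuAdjForm}), the $\f z^\perp$-component collapses to $\dot\nu_1 = -P_{\nu_1}\nu_1 + \nu_1\,(\id \wedge P_{\nu_1}) = -\pi(P_{\nu_1})\nu_1$, while the $\f z$-component becomes $\dot\nu_0 = -\nu_0\nu_0^*\nu_0 - 2\,\nu_0\nu_1^*\nu_1 + \nu_0\,(\id \wedge \nu_1\nu_1^*) = \nu_0\bigl(\id \wedge \nu_1\nu_1^* - 2\,\nu_1^*\nu_1 - \nu_0^*\nu_0\bigr)$, which is exactly (\ref{eqn:gaugedBFNilpSplit}).

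The step I expect to be the main obstacle is the equality $\f z(\nu(t)) = \f z$. The inclusion $\supseteq$ is a soft ODE-invariance argument, but excluding a transient enlargement of the centre cannot be seen from semicontinuity of $\dim \ker$ alone; it genuinely uses that the gauged bracket flow never leaves the orbit $\operatorname{GL}(\f g, J) \cdot \mu$ (equivalently, the equivariant-isometry content of Theorems \ref{thm:BracketFlowEquiv}--\ref{thm:gaugedBracketFlowEquiv}), so that $\nu(t)$ stays Lie-algebra-isomorphic to $\mu$. Everything else is routine bookkeeping with the $\f z \oplus \f z^\perp$ block decomposition and the wedge-of-endomorphisms identities.
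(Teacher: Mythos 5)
Your proof is correct, and it reaches the lemma by a genuinely different route than the paper. The paper argues "backwards": it takes the solution $(\nu_0,\nu_1)$ of the decoupled system (\ref{eqn:gaugedBFNilpSplit}), forms $\nu:=\nu_0+\nu_1$, verifies by the same algebraic identities you use that this $\nu$ satisfies the gauged flow (\ref{eqn:nilpGaugedBF}), and concludes by uniqueness of ODE solutions; the preservation of the splitting and of the centre is then carried implicitly by the constructed solution. You instead argue "forwards": you first prove that $P_\nu-S_\nu$ is block upper-triangular, deduce that the linear subspace $W=\{\nu:\nu(\f z\wedge\f g)=0\}$ is invariant, upgrade the inclusion $\f z\subseteq\f z(\nu(t))$ to an equality via the fact that the flow stays in the $\operatorname{GL}(\f g,J)$-orbit of $\mu$, and only then project the equation onto the $\f z$- and $\f z^\perp$-components. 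Both arguments hinge on exactly the same block computation of $P_\nu-S_\nu$ and the wedge identities, so neither is more elementary; what your version buys is an explicit justification of the assertion $\f z(\nu(t))=\f z$, which the paper's proof leaves essentially unaddressed (it only really delivers the inclusion needed for the ODE system), while the paper's version is shorter because uniqueness of ODE solutions absorbs all the invariance statements at once. Your closing remark is also accurate: only the inclusion $\f z\subseteq\f z(\nu(t))$ is actually used to derive (\ref{eqn:gaugedBFNilpSplit}); the equality genuinely requires the orbit argument (or equivalent), and a small typo in the paper's displayed formula for $P_\nu-S_\nu$ (the last term should read $\nu_1\nu_1^*$) is corrected in your computation.
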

	\begin{proof}
	Suppose $(\nu_0, \nu_1)$ solves (\ref{eqn:gaugedBFNilpSplit}) and let $\nu := \nu_0 + \nu_1$. Then $\nu(0) = \mu$. Note that
	\[
	P_\nu - S_\nu = \nu_0\nu_0^* + 2 \nu_0\nu_1^* + \nu_1^*\nu_1.
	\]
	Thus,
	\[
	\begin{split}
	\dot \nu =& \dot\nu_0 + \dot \nu_1\\
	=& \nu_0(\id \wedge \nu_1 \nu_1^* - 2\nu_1^*\nu_1 - \nu_0^*\nu_0) - \pi(\nu_1\nu_1^*)\nu_1\\
	=& -\pi(\nu_1\nu_1^*)\nu - 2\nu_0\nu_1^*\nu_1 + 2\nu_1 \id \wedge \nu_0\nu_1^* -\nu_0\nu_0^*\nu_0 + \nu_0 \id \wedge \nu_0\nu_0^*\\
	=&-\pi(\nu_1\nu_1^*)\nu - \pi(2\nu_0\nu_1^*)\nu_1 -\pi(\nu_0\nu_0^*)\nu_0\\
	=& -\pi(\nu_1\nu_1^*)\nu - \pi(2\nu_0\nu_1^*)\nu -\pi(\nu_0\nu_0^*)\nu\\
	=& -\pi(P_\nu - S_\nu)\nu.
	\end{split}
	\]
	So $\nu$ solves (\ref{eqn:nilpGaugedBF}). By uniqueness of ODE solutions, the result follows.
	\end{proof}
	\begin{rmk}
	\label{rmk:inductiveFlow}
	$\nu_1$ is a Lie bracket with degree of nilpotency one less than that of $\mu$, and solves the ungauged bracket flow equation (\ref{eqn:bracketFlow}).
	\end{rmk}
	
	We can now analyse the system (\ref{eqn:gaugedBFNilpSplit}). The following two lemmas show long time existence and describe the asymptotic growth. This will be essential in investigating the limiting behaviour after a suitable normalisation. We start with an intermediate result for the gauged bracket flow.
	
	\begin{lem}
		\label{lem:gaugedBFAsymptotics}
		Let $\nu = \nu_0 + \nu_1 := \Pr_{\f z}\nu + \Pr_{\f z^\perp} \nu$ be a solution to the gauged bracket flow (\ref{eqn:gaugedBFNilpSplit}) with initial condition $\nu(0) = \nu$ where $\mu \in \vcla\setminus \{0\}$ is nilpotent. Suppose that $\mu_1$, considered as a solution to the bracket flow (\ref{eqn:bracketFlow}) exists for all time and there is a constant $C_{1} > 0$ such that
		\[
		\|\nu_1\|^2 < C_{1}t^{-1}
		\] 
		for all $t > 0$. Then the solution $\nu$ exists for all time and there is another constant $C > 0$ such that
		\[
		\|\nu\|^2 < Ct^{-1}.
		\]
		for all $t > 0$.
	\end{lem}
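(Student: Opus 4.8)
The plan is to reduce the statement to a scalar differential inequality for $f(t):=\|\nu_0(t)\|^2$. By Lemma \ref{lem:gaugedBFNilpSplit} and Remark \ref{rmk:inductiveFlow}, $\nu_1$ solves the ungauged bracket flow (\ref{eqn:bracketFlow}) with $\nu_1(0)=\mu_1$, so by hypothesis it is defined for all $t\ge 0$; thus the only component whose existence must still be established is $\nu_0$, governed by the first equation of (\ref{eqn:gaugedBFNilpSplit}). Since $\nu_0$ takes values in $\f z$ and $\nu_1$ in $\f z^\perp$, as maps $\Lambda^2\f g\to\f g$ they are orthogonal, so $\|\nu\|^2=\|\nu_0\|^2+\|\nu_1\|^2=f+\|\nu_1\|^2$. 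Hence it is enough to show that $\nu_0$ is defined for all $t\ge 0$ and that $f(t)\le C' t^{-1}$ for some constant $C'$; one then takes $C:=C'+C_1$ (enlarged slightly to make the inequality strict).

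The first step is to differentiate $f$ along (\ref{eqn:gaugedBFNilpSplit}). Writing $N_0:=\nu_0^*\nu_0$, a positive semidefinite operator on $\Lambda^2\f g$, and $X:=\id\wedge\nu_1\nu_1^*-2\nu_1^*\nu_1-N_0$, the first equation of (\ref{eqn:gaugedBFNilpSplit}) reads $\dot\nu_0=\nu_0\circ X$, so by cyclicity of the trace
\[
\tfrac12\dot f=\langle\dot\nu_0,\nu_0\rangle=\tr(\nu_0 X\nu_0^*)=\tr(XN_0)=\tr\!\big((\id\wedge\nu_1\nu_1^*)N_0\big)-2\tr\!\big((\nu_1^*\nu_1)N_0\big)-\tr(N_0^2).
\]
Here the middle term is $\le 0$ since $\nu_1^*\nu_1$ and $N_0$ are positive semidefinite; the first term is $\le 2\|\nu_1\|^2\,f$, using that $\id\wedge\nu_1\nu_1^*$ is positive semidefinite with operator norm $\le 2\|\nu_1\nu_1^*\|_{\mathrm{op}}\le 2\|\nu_1\|^2$ and that $\tr(AN_0)\le\|A\|_{\mathrm{op}}\tr N_0=\|A\|_{\mathrm{op}}f$; and the last term obeys $\tr(N_0^2)\ge(\tr N_0)^2/d=f^2/d$ by Cauchy--Schwarz, where $d:=\dim\f g$ bounds the rank of $N_0$. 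Inserting the hypothesis $\|\nu_1\|^2<C_1 t^{-1}$ yields the differential inequality
\[
\dot f \le \frac{4C_1}{t}\,f - \frac{2}{d}\,f^2, \qquad t>0.
\]

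The remainder is a standard comparison argument, which simultaneously gives long-time existence. On $[0,1]$ the function $\|\nu_1\|^2$ is bounded (since $\nu_1$ is global), so $\dot f\le bf$ there for a constant $b$, and Gr\"onwall bounds $f$ by some $M=M(\mu)$; in particular $\nu_0$, hence $\nu$, is defined on $[0,1]$. For $t\ge 1$ put $g(t):=tf(t)$; then $\dot g=f+t\dot f\le\tfrac{g}{t}\big(K-\tfrac{2}{d}g\big)$ with $K:=1+4C_1$, so $\dot g<0$ whenever $g>Kd/2$. A barrier argument then shows $g(t)\le\max\{g(1),Kd/2\}$ for all $t\ge 1$ in the maximal interval of existence; this precludes finite-time blow-up, so $\nu_0$ (hence $\nu$) is defined for all $t\ge 0$, and it gives $f(t)\le C_2 t^{-1}$ on $[1,\infty)$. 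Since $f\le M\le Mt^{-1}$ on $(0,1]$, taking $C':=\max\{C_2,M\}$ finishes the argument.

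The delicate point --- the main obstacle --- is the sign bookkeeping in the computation of $\dot f$: one must check that the cubic self-interaction term $\nu_0^*\nu_0$ enters with the favorable sign $-\tr(N_0^2)$ and that this term genuinely controls $f^2$ from below, while the contribution of $\nu_1$ (the only piece not already manifestly non-positive) is at worst linear in $f$ with a coefficient decaying like $t^{-1}$. It is exactly this $t^{-1}$ decay, rather than mere boundedness, that makes the substitution $g=tf$ produce a bounded quantity and hence the sharp $t^{-1}$ decay rate; the rest is routine ODE comparison.
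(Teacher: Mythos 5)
Your proof is correct and follows essentially the same route as the paper: differentiate $\|\nu_0\|^2$ along the first equation of (\ref{eqn:gaugedBFNilpSplit}), observe that the $\nu_1$-coupling terms contribute at worst $\tfrac{M}{t}\|\nu_0\|^2$ while the cubic self-interaction gives $-L\|\nu_0\|^4$, and close with ODE comparison. The only (immaterial) difference is in the last elementary step — you split off $[0,1]$ and run a barrier argument for $g=tf$ on $[1,\infty)$, whereas the paper integrates the Bernoulli comparison ODE $\dot f = Mft^{-1}-Lf^2$ explicitly from $t=\varepsilon$ and lets $\varepsilon\to 0$.
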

	\begin{proof}
		By Lemma \ref{lem:gaugedBFNilpSplit}, $(\nu_0,\nu_1)$ solves (\ref{eqn:gaugedBFNilpSplit}) on some interval $[0,T_{\max})$. Thus,
		\[
		\begin{split}
		\frac{d}{dt}\|\nu_0\|^2 &= 2 \tr(\dot\nu_0\nu_0^*)\\
		&=2 \tr (\nu_0(\id \wedge \nu_1 \nu_1^*)\nu_0^*) - 4\tr(\nu_0\nu_1^*\nu_1\nu_0^*) - 2\tr(\nu_0\nu_0^*\nu_0\nu_0^*)\\
		&\leq K \|\nu_0\|^2\|\nu_1\|^2 - L \|\nu_0\|^4,
		\end{split} 
		\]
		for some constants $K,L > 0$. So by hypothesis, 
		\[
		\frac{d}{dt}\|\nu_0\|^2 \leq \frac{M}{t} \|\nu_0\|^2 - L \|\nu_0\|^4.
		\]
		for some constant $M > 0$. Thus fixing a small $\varepsilon > 0$, $\|\nu_0\|^2 \leq f$ for $t \in [\varepsilon, T_{\max})$ where $f$ solves $\dot f = Mft^{-1} - Lf^2$ with $f(\varepsilon) = \|\nu_0(\varepsilon)\|^2 > 0$. Note that $(t^Mf^{-1})' = Lt^M$. Thus
		\[
		\begin{split}
		f(t)^{-1} &= \frac{LM}{M +1}\bigg(t - \varepsilon \Big(\frac{\varepsilon}{t}\Big)^M\bigg) + (\varepsilon t^{-1})^{M}f(\varepsilon)^{-1}\\
		&\geq \frac{LM}{M + 1}(t-\varepsilon).
		\end{split}
		\]
		Consequently,
		\[
		\|\nu_0\|^2 \leq C_2 (t-\varepsilon)^{-1},
		\]
		for all $t > \varepsilon$. Sending $\varepsilon \to 0$, we see that for $t > 0$, $\|\nu\|^2 = \|\nu_0\|^2 + \|\nu_1\|^2 \leq C t^{-1}$ on $[0,T_{\max})$. Long time existence follows from this fact and short time existence, as this means that $\nu(t)$ remains in a compact subset.
	\end{proof}

	With this, we can now describe precise the asymptotic growth of the bracket flow for nilpotent brackets.
	\begin{thm}
		\label{thm:BFNilpGaugedGrowth}
		Suppose $\mu(t)$ is a solution to the bracket flow (\ref{eqn:bracketFlow}) with initial condition $\mu(0) = \mu$ where $\mu \in \vcla\setminus\{0\}$ is nilpotent. Then, the solution is defined for all positive times, and there exists a constant $C > 0$ such that
		\[
		(Ct + \|\mu\|^{-2})^{-1}\leq \|\mu(t)\|^2 \leq Ct^{-1}.
		\]
		For all $t > 0$. In particular, $\mu(t) \to 0$ as $t \to \infty$.
	\end{thm}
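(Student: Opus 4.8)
\section*{Proof plan for Theorem~\ref{thm:BFNilpGaugedGrowth}}

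The plan is to obtain long-time existence together with the upper bound by induction on the degree of nilpotency, and then to obtain the lower bound from a direct differential inequality for $\|\mu(t)\|^2$. Throughout I will pass to the gauged bracket flow: by Lemma~\ref{lem:gauging}, $S_\nu := \nu_1\nu_0^*-\nu_0\nu_1^*$ lies in $\f u(\f g,J)$, so Theorem~\ref{thm:gaugedBracketFlowEquiv} gives a smooth family $k_t\in\operatorname{U}(\f g,J)$ with $\nu(t)=k_t\cdot\mu(t)$, where $\nu(t)$ solves (\ref{eqn:nilpGaugedBF}); in particular $\nu$ and $\mu$ are defined on the same maximal interval and $\|\nu(t)\|=\|\mu(t)\|$ since $k_t$ is unitary. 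Thus it suffices to establish long-time existence and $\|\nu(t)\|^2\le Ct^{-1}$ for the gauged flow.

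We induct on the degree of nilpotency $k$ of $\mu$, writing $\nu=\nu_0+\nu_1$ as in Lemma~\ref{lem:gaugedBFNilpSplit}. If $k\le 2$, then $\mu(\f g,\f g)\subset\f z$, hence $\mu_1=\Pr_{\f z^\perp}\mu=0$ and so $\nu_1\equiv 0$ by uniqueness; Lemma~\ref{lem:gaugedBFAsymptotics} then applies, its hypothesis on $\|\nu_1\|^2$ being vacuously satisfied, and yields long-time existence of $\nu$ together with $\|\nu\|^2\le Ct^{-1}$. If $k\ge 3$, then $\mu_1\ne 0$ and, by Remark~\ref{rmk:inductiveFlow}, $\nu_1$ is precisely the solution of the ungauged bracket flow (\ref{eqn:bracketFlow}) starting at the nonzero, $(k-1)$-step nilpotent bracket $\mu_1$. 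The inductive hypothesis applied to $\mu_1$ gives that $\nu_1$ exists for all time with $\|\nu_1(t)\|^2\le C_1 t^{-1}$, so Lemma~\ref{lem:gaugedBFAsymptotics} again gives long-time existence of $\nu$ and $\|\nu\|^2\le Ct^{-1}$. Transferring back to $\mu$ via the first paragraph completes the induction and gives the upper bound.

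For the lower bound, note first that $\mu(t)\ne 0$ for all $t$: the bracket flow is a smooth autonomous ODE with fixed point $0$, so uniqueness and $\mu(0)\ne 0$ preclude $\mu(t)$ ever vanishing. Using the moment-map identity $\langle\pi(E)\mu,\mu\rangle=\operatorname{tr}\big((P_\mu-Q_\mu)E\big)$ from the proof of Lemma~\ref{lem:GITcondn} with $E=P_\mu$, together with $P_\mu=\mu\mu^*$ (Corollary~\ref{cor:PmuAdjForm}), we get
\[
\tfrac{d}{dt}\|\mu\|^2=-2\langle\pi(P_\mu)\mu,\mu\rangle=-2\operatorname{tr}(P_\mu^2)+2\operatorname{tr}(Q_\mu P_\mu)\ \ge\ -2\operatorname{tr}(P_\mu^2)\ \ge\ -2\|\mu\|^4,
\]
where we used that $P_\mu$ and $Q_\mu$ are positive semi-definite (so $\operatorname{tr}(Q_\mu P_\mu)\ge 0$) and that $\operatorname{tr}(P_\mu^2)=\|\mu\mu^*\|^2\le\|\mu\|^4$. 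Writing $u(t)=\|\mu(t)\|^2>0$, this says $\dot u\ge-2u^2$, i.e. $\frac{d}{dt}(u^{-1})\le 2$; integrating from $0$ yields $\|\mu(t)\|^{-2}\le\|\mu\|^{-2}+2t$. After enlarging $C$ so that $C\ge2$, this gives $(Ct+\|\mu\|^{-2})^{-1}\le\|\mu(t)\|^2$, and combined with the upper bound we obtain the two-sided estimate; in particular $\|\mu(t)\|^2\le Ct^{-1}\to0$, so $\mu(t)\to0$.

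The main point to get right is the inductive setup --- identifying $\nu_1$ with the lower-step bracket flow via Remark~\ref{rmk:inductiveFlow}, and checking that $k\le2$ really forces $\mu_1=0$ --- since all of the analytic work underlying the upper bound is already packaged in Lemma~\ref{lem:gaugedBFAsymptotics}. The lower bound itself is elementary once one invokes the moment-map identity and the positivity of $P_\mu$ and $Q_\mu$.
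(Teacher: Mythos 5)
Your proof is correct and follows essentially the same route as the paper: induction on the degree of nilpotency using the gauged bracket flow and Lemma~\ref{lem:gaugedBFAsymptotics} for the upper bound, together with an ODE comparison of the form $\dot u \geq -Cu^2$ for the lower bound. The only cosmetic differences are that the paper handles the $2$-step base case by a separate direct computation ($\dot\mu = -\mu\mu^*\mu$, giving $\frac{d}{dt}\|\mu\|^2 \leq -c\|\mu\|^4$) rather than by feeding $\nu_1\equiv 0$ into Lemma~\ref{lem:gaugedBFAsymptotics}, and obtains the lower bound from Cauchy--Schwarz applied to $\bgm{\pi(P_\mu)\mu}{\mu}$ rather than from the moment-map identity and positivity of $P_\mu$ and $Q_\mu$.
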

	\begin{proof}
		We have that $\dot \mu(t) = -\pi(P_{\mu(t)})\mu(t) = -\pi(\mu(t)\mu(t)^*)\mu(t)$. Thus,
		\[
		\begin{split}
		\frac{d}{dt}\|\mu(t)\|^2 &= 2\bgm{\dot\mu(t)}{\mu(t)} \\
		&= -2\bgm{\pi(\mu(t)\mu(t)^*)\mu(t)}{\mu(t)}\\
		&\geq -2\|\pi(\mu(t)\mu(t)^*)\mu(t)\|\|\mu(t)\|\\
		&\geq -C \|\mu(t)\|^4,
		\end{split}
		\]
		where we have used the Cauchy-Schwartz inequality. Thus, we see that $\|\mu(t)\|^2 > \frac{1}{Ct + \|\mu(0)\|^{-2}}$ for all $t \geq 0$ by comparison with $\dot f = -Cf^2$.

		The proof of the second estimate is by induction. First, suppose $\mu$ is $2$-step nilpotent, so $\mu(\f g \wedge \f g) \subset \f z$ and the solution $\mu(t)$ to the bracket flow (\ref{eqn:bracketFlow}) satisfies
		\[
		\dot \mu(t) = -\mu(t)\mu(t)^*\mu(t).
		\]
		Thus, $\frac{d}{dt}\|\mu(t)\|^2 = - 2\|\mu(t)\mu(t)^*\|^2 \leq -c \|\mu(t)\|^4$, where $c > 0$ depends on dimension. Again, by comparison we have that $\|\mu(t)\|^2 < \frac{1}{ct}$ for all $t > 0$.
		
		Now suppose that for any $(k-1)$-step nilpotent bracket, the norm squared of the corresponding solution to the bracket flow (\ref{eqn:bracketFlow}) is bounded above by $C_1 t^{-1}$ for some constant $C_1 > 0$. If $\mu(t)$ solves (\ref{eqn:bracketFlow}), then by Theorem \ref{thm:gaugedBracketFlowEquiv} there is a smooth family $k_t \in \operatorname{U}(\f g, J)$ such that $k_t \cdot \mu(t)$ solves (\ref{eqn:nilpGaugedBF}). By Lemma \ref{lem:gaugedBFNilpSplit} and the inductive hypothesis, $\|\Pr_{\f z^\perp}k_t \cdot \mu(t)\|^2$ is bounded above by $C_1t^{-1}$. Thus, by Lemma \ref{lem:gaugedBFAsymptotics}, there exists $C > 0$ such that  $\|\mu(t)\|^2 = \|k_t \cdot \mu(t)\|^2 < C t^{-1}$ for all $t > 0$. Enlarging $C$ if necessary and combining with the first estimate we get that 
		\[
		\frac{1}{Ct + \|\mu(0)\|^{-2}} < \|\mu(t)\|^2 < \frac{C}{t},
		\]
		for all $t > 0$ as desired.
		\end{proof}

	We now turn to the analysis of the limiting behaviour of the normalised bracket flow
	\begin{equation}
	\label{eqn:BFNilpNorm}
	\frac{d}{dt}\tilde{\mu} = -\pi(P_{\tilde{\mu}} - \alpha_{\tilde{\mu}}\id)\tilde{\mu},\qquad \tilde{\mu}(0) = \frac{\mu}{\|\mu\|},
	\end{equation}
	where $\alpha_{\tilde{\mu}} := -\bgm{\pi(P_{\tilde{\mu}})\tilde{\mu}}{\tilde{\mu}}$ is chosen to keep $\|\tilde{\mu} \| \equiv 1$.

	In analysing (\ref{eqn:BFNilpNorm}), it will be crucial to consider a different (but asymptotically equivalent) normalisation of the gauged bracket flow (\ref{eqn:nilpGaugedBF}). Namely, the one that keeps $\|\Pr_{\f z^\perp}\nu\| \equiv 1$. The corresponding system is as follows
	\begin{equation}
		\label{eqn:BFNilpGaugedNorm}
		\begin{cases}
			\dot\eta_0 = \eta_0(\id \wedge \eta_1\eta_1^* - 2 \eta_1^*\eta_1 - \eta_0^*\eta_0 -  \alpha_{\eta_1}\id), &\eta_0(0) = \frac{\mu_0}{\|\mu_1\|},\\
			\dot \eta_1 = -\pi(P_{\eta_1} - \alpha_{\eta_1}\id)\eta_1, &\eta_1(0) = \frac{\mu_1}{\|\mu_1\|}.
		\end{cases}
	\end{equation}
	As before, $\alpha_{\eta_1} = -\bgm{\pi(P_{\eta_1})\eta_1}{\eta_1}$ and is now chosen to keep $\|\eta_1\| \equiv 1$. Notice that $\eta_1$ is a solution to the normalised bracket flow (\ref{eqn:BFNilpNorm}) with initial condition $\frac{\mu_1}{\|\mu_1\|}$. By induction, we will expect subconvergence of $\eta_1$ to a a soliton bracket. Thus, it is a natural first step to assume $\dot \eta_1 = 0$. In this setting, we can find an appropriate monotone quantity that is stationary precisely when $\dot \eta_0 = 0$. Specifically, define the map $\phi\colon \f g\otimes \Lambda^2 \f g^* \to \R_{\geq 0}$ by
	\begin{equation}
		\label{eqn:nilpMonotone}
	\phi(\eta) := \frac{1}{2}\big(\|\eta_0\eta_1^*\|^2 + \|\id \wedge \eta_1\eta_1^* - \eta_1^*\eta_1 - \eta_0^*\eta_0 - \alpha_{\eta_1}\id\|^2\big), \qquad \eta \in \f g\otimes \Lambda^2 \f g^*.
	\end{equation}
	We then have the following
	\begin{lem}
		\label{lem:monotoneQuantity}
		Suppose $(\eta_0,\eta_1)$ is a solution to (\ref{eqn:BFNilpGaugedNorm}) and that $\dot \eta_1 = 0$. Then $\phi(\eta_0 + \eta_1)$
		is monotone decreasing and stationary if and only if $\dot \eta_0 = 0$.
	\end{lem}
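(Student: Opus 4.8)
The plan is to compute $\frac{d}{dt}\phi(\eta_0+\eta_1)$ directly, using that $\dot\eta_1 = 0$ so only $\eta_0$ evolves, and show the derivative is a sum of (negatives of) squared norms. Write $N := \mathrm{Id}\wedge\eta_1\eta_1^* - \eta_1^*\eta_1 - \eta_0^*\eta_0 - \alpha_{\eta_1}\mathrm{Id}$ for the second slot inside $\phi$, so that $2\phi = \|\eta_0\eta_1^*\|^2 + \|N\|^2$, and note that the evolution equation for $\eta_0$ in (\ref{eqn:BFNilpGaugedNorm}) reads $\dot\eta_0 = \eta_0(N + \eta_0^*\eta_0) = \eta_0\,\mathrm{Id}\wedge\eta_1\eta_1^* - 2\eta_0\eta_1^*\eta_1 - \eta_0\eta_0^*\eta_0 - \alpha_{\eta_1}\eta_0$; equivalently $\dot\eta_0 = \eta_0 N - \eta_0\eta_1^*\eta_1 + (\eta_0 N - \eta_0 N)$, but I will keep whichever grouping makes the two terms of $\phi$ pair up cleanly. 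Since $\eta_1$ is fixed, $\dot N = -\frac{d}{dt}(\eta_0^*\eta_0) = -\dot\eta_0^*\eta_0 - \eta_0^*\dot\eta_0$, and $\alpha_{\eta_1}$ is a constant here.

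First I would differentiate: $\frac{d}{dt}\phi = \mathrm{Re}\,\langle \dot\eta_0\eta_1^*, \eta_0\eta_1^*\rangle + \mathrm{Re}\,\langle \dot N, N\rangle = \mathrm{Re}\,\langle \dot\eta_0\eta_1^*\eta_1, \eta_0\rangle - \mathrm{Re}\,\langle \dot\eta_0^*\eta_0 + \eta_0^*\dot\eta_0, N\rangle$. Next I would substitute $\dot\eta_0 = \eta_0 N - \eta_0\eta_1^*\eta_1$ (this is the grouping I expect to work: $\dot\eta_0 = \eta_0(N + \eta_0^*\eta_0) - \eta_0\eta_0^*\eta_0 = \ldots$; the correct split must be read off carefully from (\ref{eqn:BFNilpGaugedNorm}), and I should double-check it against the single-slot form in (\ref{eqn:gaugedBFNilpSplit}) modulo the normalisation term). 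Plugging in and using the adjoint identities $\langle AB,C\rangle = \langle B, A^*C\rangle$, self-adjointness of $N$, $\eta_0^*\eta_0$, $\eta_1^*\eta_1$ and $\eta_1\eta_1^*$, together with the wedge-adjoint rule $(A\wedge B)^* = A^*\wedge B^*$ recalled before Lemma \ref{lem:PmuForm}, the cross terms should combine into $-\|\eta_0\eta_1^*\eta_1\|^2$-type and $-\|\eta_0 N\|^2$-type expressions plus genuinely cancelling pieces; more precisely I expect
\[
\frac{d}{dt}\phi(\eta_0+\eta_1) = -\big\|[\,\eta_0\,,\,\cdot\,]\text{-type terms}\big\|^2 \le 0,
\]
with equality forcing both $\eta_0\eta_1^*\eta_1 = 0$ and $\eta_0 N = 0$ (or a single combined vanishing), which is exactly the condition $\dot\eta_0 = 0$. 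For the final equivalence: if $\dot\eta_0 = 0$ then trivially $\phi$ is stationary; conversely stationarity forces the squared norm(s) appearing in $\frac{d}{dt}\phi$ to vanish, and I must check that the vanishing set coincides with $\{\dot\eta_0 = 0\}$ rather than being strictly larger — this is where I would use $\ker(\eta_0^*\eta_0) = \ker\eta_0$ and the structure $\dot\eta_0 = \eta_0(N+\eta_0^*\eta_0)$ to conclude $\dot\eta_0 = \eta_0\cdot(\text{the vanishing combination})=0$.

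The main obstacle I anticipate is bookkeeping in the cross-term computation: making the right pairing between the $\|\eta_0\eta_1^*\|^2$ derivative and the $\|N\|^2$ derivative so that everything organises into manifest squares, and in particular handling the $\mathrm{Id}\wedge\eta_1\eta_1^*$ term, whose adjoint and whose interaction with $\eta_0^*\eta_0$ under the trace/inner product require the wedge-composition identity $(A\wedge B)\circ(C\wedge D) = AC\wedge BD + AD\wedge BC$. A secondary subtlety is confirming that $\alpha_{\eta_1}$ is genuinely constant along the flow when $\dot\eta_1 = 0$ (it is, since $\alpha_{\eta_1}$ depends only on $\eta_1$), so it contributes nothing to $\dot N$; and that the normalisation term $-\alpha_{\eta_1}\eta_0$ in $\dot\eta_0$ contributes a term $-2\alpha_{\eta_1}\,\mathrm{Re}\langle\text{stuff}\rangle$ that is absorbed correctly — I expect it to combine with the $-\alpha_{\eta_1}\mathrm{Id}$ inside $N$ precisely so that $N$ appears as a unit. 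Once the identity $\frac{d}{dt}\phi = -(\text{sum of squares})$ is established, monotonicity and the characterisation of stationary points are immediate.
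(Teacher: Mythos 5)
Your strategy is the same as the paper's: differentiate $\phi$ with $\eta_1$ frozen, substitute the ODE for $\eta_0$, and organise the result into non-positive terms whose vanishing characterises $\dot\eta_0=0$. Two points need fixing before the computation actually closes. First, the ODE: of the several groupings you write down, only $\dot\eta_0 = \eta_0 N - \eta_0\eta_1^*\eta_1$ is correct; the expression $\eta_0(N+\eta_0^*\eta_0)$ equals $\eta_0\bigl(\id\wedge\eta_1\eta_1^* - \eta_1^*\eta_1 - \alpha_{\eta_1}\id\bigr)$ and drops the terms $-\eta_0\eta_1^*\eta_1-\eta_0\eta_0^*\eta_0$, so the version you invoke again in your final paragraph would derail the equality analysis. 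Second, and more importantly, the mechanism that disposes of the $\id\wedge\eta_1\eta_1^*$ contribution is not a wedge-composition identity but the hypothesis itself: setting $H:=\id\wedge\eta_1\eta_1^*-\eta_1^*\eta_1-\alpha_{\eta_1}\id$ (so $N=H-\eta_0^*\eta_0$), the normalised flow for $\eta_1$ reads $\dot\eta_1=\eta_1 H$, so $\dot\eta_1=0$ gives $\eta_1H=0$ and hence $H\eta_1^*=0$ by self-adjointness of $H$. This is exactly what kills $\operatorname{tr}(\eta_0H\eta_1^*\eta_1\eta_0^*)$ and yields $\frac{d}{dt}\frac12\|\eta_0\eta_1^*\|^2=-\|\eta_0^*\eta_0\eta_1^*\|^2-\|\eta_1^*\eta_1\eta_0^*\|^2$ on the nose. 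Without this observation the derivative does not visibly become non-positive, and your sketch gives no substitute for it.

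Finally, the derivative of the $\|N\|^2$ part comes out (up to a constant) as $-2\|\eta_0N\|^2+2\langle\eta_0\eta_1^*\eta_1,\eta_0N\rangle$, which is \emph{not} a manifest sum of negative squares: there is a genuine cross term. The paper handles it with $2\langle A,B\rangle\le\|A\|^2+\|B\|^2$, whose equality case $A=B$, i.e.\ $\eta_0N=\eta_0\eta_1^*\eta_1$, combines with the vanishing of $\|\eta_1^*\eta_1\eta_0^*\|^2$ to force $\dot\eta_0=\eta_0N-\eta_0\eta_1^*\eta_1=0$; equivalently one can complete the square and exhibit a $-\|\dot\eta_0\|^2$ summand, which makes the characterisation of stationarity immediate. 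With these three corrections your outline reproduces the paper's proof.
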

	\begin{proof}
	Define $H := \id \wedge \eta_1\eta_1^* - \eta_1^*\eta_1 - \alpha_{\eta_1}\id$ and $F := H-\eta_0^*\eta_0$. Then 
	\[
	\phi(\eta_0 + \eta_1) = \frac{1}{2}(\|\eta_0\eta_1^*\|^2 + \|F\|^2),
	\]
	and 
	\[
	\dot \eta_0 = \eta_0F - \eta_0\eta_1^*\eta_1 = \eta_0H - \eta_0\eta_1^*\eta_1 - \eta_0\eta_0^*\eta_0.
	\]
	Thus,
	\[
	\begin{split}
	\frac{d}{dt}\frac{1}{2}\|\eta_0\eta_1^*\|^2 &= \tr(\eta_0H\eta_1^*\eta_1\eta_0) - \|\eta_0^*\eta_0\eta_1^*\|^2 - \|\eta_1^*\eta_1\eta_0^*\|^2\\
	&= -\|\eta_0^*\eta_0\eta_1^*\|^2 - \|\eta_1^*\eta_1\eta_0^*\|^2,
	\end{split}
	\]
	since $0 = \dot \eta_1 = \eta_1 H$. Moreover, $\alpha_{\eta_1}$ only depends on $\eta_1$ and so it is constant. This gives
	\[
	\begin{split}
	\frac{d}{dt}\|F\|^2 &= -2\|\eta_0F\|^2 + 2\tr(\eta_1^*\eta_1\eta_0^*\eta_0F)\\
	&\leq -\|\eta_0F\|^2 + \|\eta_1^*\eta_1\eta_0^*\|^2, 
	\end{split}
	\]
	with equality if and only if $\eta_0F = \eta_0\eta_1^*\eta_1$. Here we have used the inequality $2\tr(AB^*)\leq \|A\|^2 + \|B\|^2$ with equality if and only if $A = B$, which follows from Cauchy-Schwarz and Young's inequalities. Thus,
	\[
	\frac{d}{dt}\phi \leq -\|\eta_0F\|^2 - \|\eta_0^*\eta_0\eta_1^*\|^2 \leq 0,
	\]
	with equality if and only if
	\[
	\eta_0F = \eta_0\eta_1^*\eta_1 = \eta_1\eta_0^*\eta_0 = 0,
	\]
	considered as vectors in $\f g \otimes \Lambda^2 \f g^*$. If this equality holds, then it is a simple computation to see that $\dot \eta_0 = 0$. Conversely if $\dot \eta_0 = 0$, then since $0 = \eta_1 H$,
	\[
	0 = \tr(\dot \eta_0\eta_1^*\eta_1\eta_0^*) = -\|\eta_0^*\eta_0\eta_1^*\|^2 - \|\eta_1^*\eta_1\eta_0^*\|^2.
	\]
	Thus, $\eta_0\eta_1^*\eta_1 = \eta_1\eta_0^*\eta_0 = 0$. Finally, $\eta_0F = \dot \eta_0 + \eta_0\eta_1^*\eta_1 = 0$ and so $\frac{d}{dt}\phi = 0$.
	\end{proof}

	\begin{cor}
		\label{cor:SubConvInductionStep}
		Suppose $(\eta_0,\eta_1)$ solves (\ref{eqn:BFNilpGaugedNorm}) and there exists an increasing sequence of times $t_l \to \infty$ such that $\eta_1(t_l)$ converges and $\dot \eta_1 (t_l)\to 0$ as $l \to \infty$. Then there exists another increasing sequence of times $t'_l \to \infty$ such that $(\eta_0(t_l'),\eta_1(t_l'))$ converges and $(\dot \eta_0(t'_l),\dot \eta_1(t'_l))\to 0$ as $l \to \infty$.
	\end{cor}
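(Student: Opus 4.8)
The plan is to reduce the statement to the already-established Lemma \ref{lem:monotoneQuantity} by means of a time-translation (parabolic-rescaling) argument; the reason one cannot argue directly is that $\phi(\eta(t))$ need not be monotone along the actual flow, since $\dot\eta_1$ is only known to be small along the sequence $(t_l)$, and passing to a limit flow will make $\eta_1$ genuinely static so that Lemma \ref{lem:monotoneQuantity} applies. \textbf{Step 1 (compactness).} First I would check that $(\eta_0,\eta_1)$ is defined for all $t\ge 0$ and stays in a fixed compact subset $\mathcal K\subset\vcla$. Long-time existence follows from Lemma \ref{lem:gaugedBFAsymptotics} together with Theorem \ref{thm:BFNilpGaugedGrowth} applied to the $(k-1)$-step nilpotent bracket governing $\eta_1$ (Remark \ref{rmk:inductiveFlow}), once one notes that the rescaling-plus-reparametrisation relating $\eta$ to the unnormalised gauged flow $\nu=\nu_0+\nu_1$ of Lemma \ref{lem:gaugedBFNilpSplit} takes normalised time onto all of $[0,\infty)$ (because $\|\nu_1\|$ decays only polynomially). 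The bound $\|\eta_1\|\equiv 1$ is built in; for $\|\eta_0\|$ I would write $\|\eta_0\|^2=\|\nu_0\|^2/\|\nu_1\|^2\le\|\nu\|^2/\|\nu_1\|^2$ and feed in the two-sided estimate of Theorem \ref{thm:BFNilpGaugedGrowth} — the upper bound $\|\nu\|^2=\|\mu(t)\|^2\le C/t$ for $\nu$ against the lower bound $\|\nu_1\|^2\ge(Ct+\|\mu_1\|^{-2})^{-1}$ for its quotient bracket — which keeps this ratio bounded. Since $\vcla$ is closed and scale-invariant, $\mathcal K:=\overline{\{\eta(t):t\ge 0\}}$ is compact.

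\textbf{Step 2 (limit flow and staticity of $\eta_1$).} The system (\ref{eqn:BFNilpGaugedNorm}) is autonomous — $\alpha_{\eta_1}$ depends on $\eta_1$ alone — with smooth right-hand side, and the translates $\eta^{(l)}(s):=\eta(s+t_l)$ all take values in $\mathcal K$, hence are uniformly bounded in $C^k$ on compact $s$-intervals for every $k$. By Arzel\`a--Ascoli and a diagonal argument I would pass to a subsequence along which $\eta^{(l)}\to\eta^{(\infty)}=(\eta_0^{(\infty)},\eta_1^{(\infty)})$ in $C^\infty_{\mathrm{loc}}(\R)$, a solution of (\ref{eqn:BFNilpGaugedNorm}) on all of $\R$ with values in $\mathcal K$. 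Then $\eta(t_l)=\eta^{(l)}(0)\to\eta^{(\infty)}(0)$ and $\dot\eta(t_l)=\dot\eta^{(l)}(0)\to\dot\eta^{(\infty)}(0)$, so $\eta_1^{(\infty)}(0)$ is the assumed limit $\bar\eta_1$ of $\eta_1(t_l)$ while $\dot\eta_1^{(\infty)}(0)=\lim_l\dot\eta_1(t_l)=0$. The key observation is now that the $\eta_1$-equation $\dot\eta_1=-\pi(P_{\eta_1}-\alpha_{\eta_1}\id)\eta_1$ is a self-contained autonomous ODE not involving $\eta_0$; hence $\dot\eta_1^{(\infty)}(0)=0$ forces $\bar\eta_1$ to be a rest point of it, and by uniqueness of ODE solutions $\eta_1^{(\infty)}\equiv\bar\eta_1$, i.e. $\dot\eta_1^{(\infty)}\equiv 0$.

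\textbf{Step 3 (apply the monotone quantity and diagonalise back).} With $\dot\eta_1^{(\infty)}\equiv 0$, Lemma \ref{lem:monotoneQuantity} applies to $\eta^{(\infty)}$: $\phi(\eta^{(\infty)}(s))$ is non-increasing and bounded below by $0$, and the differential inequalities in its proof in fact yield $\tfrac{d}{ds}\phi(\eta^{(\infty)}(s))\le -c\,\|\dot\eta_0^{(\infty)}(s)\|^2$ for some $c>0$. Integrating and using $\dot\eta_1^{(\infty)}\equiv 0$ gives $\int_0^\infty\|\dot\eta^{(\infty)}(s)\|^2\,ds<\infty$, so there is $s_m\to\infty$ with $\dot\eta^{(\infty)}(s_m)\to 0$ and, after a further subsequence, $\eta^{(\infty)}(s_m)\to\bar\eta\in\mathcal K$. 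Finally, since $\eta^{(l)}\to\eta^{(\infty)}$ in $C^1_{\mathrm{loc}}$, I would choose $l_m$ increasing and large enough that $\eta^{(l_m)}$ and $\dot\eta^{(l_m)}$ are $1/m$-close to $\eta^{(\infty)}$ and $\dot\eta^{(\infty)}$ at $s=s_m$; then $t_m':=t_{l_m}+s_m$ is an increasing sequence with $t_m'\to\infty$ along which $\eta(t_m')=\eta^{(l_m)}(s_m)\to\bar\eta$ and $\dot\eta(t_m')\to 0$ — exactly the claimed conclusion.

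\textbf{Main obstacle.} I expect Step 1 to be the substantive point: one needs the a priori boundedness of $\|\eta_0\|$ along a flow in which only $\|\eta_1\|$, and not $\|\eta\|$, is normalised, and this is precisely where the sharp two-sided growth estimate of Theorem \ref{thm:BFNilpGaugedGrowth} — used simultaneously for $\mu$ and for its quotient bracket $\mu_1$ — is essential. Everything afterwards is the soft principle that $\omega$-limit sets of gradient-like flows consist of rest points, adapted via the time-translation trick to compensate for the fact that $\dot\eta_1$ is small only along $(t_l)$ rather than identically zero.
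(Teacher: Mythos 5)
Your argument is correct and is essentially the paper's proof: the paper likewise uses Theorem \ref{thm:BFNilpGaugedGrowth} to bound $\eta_0$, passes to a limit point $(\bar\eta_0,\bar\eta_1)$ with $\bar\eta_1$ a rest point of the self-contained $\eta_1$-equation, applies Lemma \ref{lem:monotoneQuantity} to the solution through $(\bar\eta_0,\bar\eta_1)$ to see its $\omega$-limit consists of fixed points, and concludes via $\omega(\bar\eta_0,\bar\eta_1)\subset\omega(\eta_0(0),\eta_1(0))$ — your translated-flows/Arzel\`a--Ascoli diagonal construction is exactly this $\omega$-limit argument made explicit. (The pointwise inequality $\tfrac{d}{ds}\phi\le -c\,\|\dot\eta_0^{(\infty)}\|^2$ you assert in Step 3 does not follow literally from the proof of Lemma \ref{lem:monotoneQuantity}, but you only need that $\phi$ is monotone and stationary precisely at rest points, i.e.\ LaSalle's invariance principle, so this is harmless.)
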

	\begin{proof}
		For any bracket $\nu = \nu_1 + \nu_0$ such that $\|\nu_1\| = 1$, let $\omega(\nu_0,\nu_1)$ denote the $\omega$-limit of (\ref{eqn:BFNilpGaugedNorm}) with initial condition replaced by $(\nu_0,\nu_1)$. 
		
		By hypothesis, $\eta_1(t_l) \to \bar \eta_1$ satisfying 	$\frac{d}{dt}\bar \eta_1 = 0$. By Theorem \ref{thm:BFNilpGaugedGrowth}, $\eta_0$ is bounded above and so after passing to a subsequence, $\eta_0(t_l)$ converges to a bracket $\bar \eta_0$. By Lemma \ref{lem:monotoneQuantity}, $\omega(\bar \eta_0,\bar\eta_1)$ consists entirely of fixed points to (\ref{eqn:BFNilpGaugedNorm}). Thus, $\omega(\bar \eta_0,\bar \eta_1) \subset \omega(\eta_0(0),\eta_1(0))$, and so there exists a fixed point to (\ref{eqn:BFNilpGaugedNorm}) in $\omega(\eta_0(0),\eta_1(0))$.
	\end{proof}
	\begin{rmk}
		By Proposition \ref{prop:algSol}, Corollary \ref{cor:SubConvInductionStep} implies that if $\eta_1$ sub-converges to an algebraic soliton, then $\eta = \eta_0 + \eta_1$ sub-converges to a semi-algebraic soliton. Which is in turn, algebraic by Theorem \ref{thm:nilpSolisAlg}.
	\end{rmk}
	This monotone quantity allows us to prove the following
	
	\begin{thm}
		\label{thm:NilpotentSubConvergence}
		Suppose $\tilde \mu(t)$ solves the normalised bracket flow equation (\ref{eqn:BFNilpNorm}). Then there exists an increasing sequence of times $t_l \to \infty$ such that $\tilde \mu(t_l)$ converges to a non-zero fixed point of (\ref{eqn:BFNilpNorm}) as $l \to \infty$.
	\end{thm}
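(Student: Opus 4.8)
The strategy is to argue by induction on the step length $k$ of the nilpotent bracket $\mu = \mu_0 + \mu_1$, using the gauged, $\|\Pr_{\f z^\perp}\nu\|$-normalised flow (\ref{eqn:BFNilpGaugedNorm}) together with Corollary \ref{cor:SubConvInductionStep} to carry out the inductive step. Note first that, being a time-reparametrisation of the bracket flow (which exists for all positive times by Theorem \ref{thm:BFNilpGaugedGrowth}) and keeping $\|\tilde\mu\| \equiv 1$, the solution $\tilde\mu(t)$ is defined for all $t > 0$ and stays on the unit sphere of $\f g \otimes \Lambda^2 \f g^*$. That sphere being compact, along any sequence of times a subsequence of $\tilde\mu$ converges, automatically to a non-zero limit; so the only real task is to produce times $t_l \to \infty$ with $\dot{\tilde\mu}(t_l) \to 0$, since then the limit is a fixed point of (\ref{eqn:BFNilpNorm}). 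For the base case $k \leq 2$ (the abelian case $\mu = 0$ being excluded), this is precisely \cite[Theorem A]{pujia2020positive}, which via Theorem \ref{thm:BracketFlowEquiv} and Proposition \ref{prop:algSol} says that on a $2$-step nilpotent complex Lie group the normalised bracket flow sub-converges to a norm-one algebraic soliton bracket, hence to a non-zero fixed point of (\ref{eqn:BFNilpNorm}).

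For the inductive step, assume the theorem for all brackets of step $< k$ and let $\mu$ be $k$-step nilpotent with $k \geq 3$, so that $\mu_1 = \Pr_{\f z^\perp}\mu \neq 0$ is $(k-1)$-step nilpotent. Pass from $\tilde\mu$ to the solution $(\eta_0,\eta_1)$ of the gauged normalised system (\ref{eqn:BFNilpGaugedNorm}); as recorded there, $\eta_1$ alone solves the normalised bracket flow (\ref{eqn:BFNilpNorm}) with initial datum $\mu_1/\|\mu_1\|$, a bracket of step $k-1$. By the inductive hypothesis there is a sequence $s_l \to \infty$ with $\eta_1(s_l) \to \bar\eta_1$, a fixed point of (\ref{eqn:BFNilpNorm}); since the right-hand side of the $\eta_1$-equation depends continuously on $\eta_1$ and vanishes at $\bar\eta_1$, this forces $\dot\eta_1(s_l) \to 0$. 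Corollary \ref{cor:SubConvInductionStep} then produces a sequence $t'_l \to \infty$ along which $(\eta_0,\eta_1)$ converges to a fixed point $(\bar\eta_0,\bar\eta_1)$ of the full system (\ref{eqn:BFNilpGaugedNorm}), with $\|\bar\eta_1\| = 1$; and by the Remark following Corollary \ref{cor:SubConvInductionStep} together with Theorem \ref{thm:nilpSolisAlg}, $\bar\eta := \bar\eta_0 + \bar\eta_1 \neq 0$ is an algebraic soliton bracket.

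It remains to descend from $(\eta_0,\eta_1)$ back to $\tilde\mu$. By Theorem \ref{thm:gaugedBracketFlowEquiv}, at a common underlying bracket-flow time the gauged solution $\eta = \eta_0 + \eta_1$ and the ungauged normalised solution $\tilde\mu$ satisfy $\eta = c\,(k_t \cdot \tilde\mu)$, where $k_t \in \operatorname{U}(\f g,J)$ is unitary (hence acts by isometries on $\vcla$) and $c = \|\nu\|/\|\Pr_{\f z^\perp}\nu\|$ is the ratio of the two normalising factors, $\nu = k_t\cdot\mu_t$ being the underlying gauged bracket flow. Now $\Pr_{\f z^\perp}\nu$ solves the bracket flow with initial condition $\mu_1$ (Lemma \ref{lem:gaugedBFNilpSplit}, Remark \ref{rmk:inductiveFlow}), so Theorem \ref{thm:BFNilpGaugedGrowth} applied to the non-zero bracket $\mu_1$ gives $\|\Pr_{\f z^\perp}\nu\|^2 \geq c_1 t^{-1}$ for large $t$, while $\|\nu\|^2 = \|\mu_t\|^2 \leq C t^{-1}$; together with $\|\Pr_{\f z^\perp}\nu\| \leq \|\nu\|$ this shows $c$ lies in a fixed compact subset of $[1,\infty)$. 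Passing to a further subsequence along which $c$ and $k_t$ converge, $\tilde\mu$ converges along the corresponding times to $w\cdot(\bar\eta/\|\bar\eta\|)$ for some $w \in \operatorname{U}(\f g,J)$. By $\operatorname{U}(\f g,J)$-equivariance of the right-hand side of (\ref{eqn:BFNilpNorm}) (resting on $P_{k\cdot\nu} = kP_\nu k^*$), this limit is again a norm-one algebraic soliton bracket, hence a non-zero fixed point of (\ref{eqn:BFNilpNorm}), which completes the induction.

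The delicate point, and the expected main obstacle, is precisely this last step: reconciling the two distinct normalisations --- the one keeping $\|\tilde\mu\| \equiv 1$ and the one keeping $\|\Pr_{\f z^\perp}\eta\| \equiv 1$ --- together with the accompanying time-reparametrisation and unitary gauge, so that sub-convergence of $(\eta_0,\eta_1)$ genuinely descends to sub-convergence of $\tilde\mu$ to a fixed point of (\ref{eqn:BFNilpNorm}). This is exactly where the sharp \emph{two-sided} growth estimate of Theorem \ref{thm:BFNilpGaugedGrowth}, applied simultaneously to $\mu_t$ and to $\Pr_{\f z^\perp}\nu$, is indispensable, and why the inductive step requires $\mu_1 \neq 0$, i.e. $k \geq 3$. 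The organising idea that makes the pieces fit is to set up the induction so that the hypothesis ``$\eta_1$ sub-converges to an algebraic soliton'' required by Corollary \ref{cor:SubConvInductionStep} is exactly the inductive statement itself.
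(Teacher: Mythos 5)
Your proposal is correct and follows essentially the same route as the paper: induction on the nilpotency step with Pujia's two-step result as base case, the gauged normalised system (\ref{eqn:BFNilpGaugedNorm}) and Corollary \ref{cor:SubConvInductionStep} for the inductive step, and Theorems \ref{thm:gaugedBracketFlowEquiv}, \ref{thm:BFNilpGaugedGrowth} and \ref{thm:nilpSolisAlg} to transfer sub-convergence back to $\tilde\mu$. In fact you spell out more explicitly than the paper why the ratio $c=\|\nu\|/\|\Pr_{\f z^\perp}\nu\|$ of the two normalising factors stays in a compact subset of $(0,\infty)$, which is exactly the right justification.
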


	\begin{proof}
		The proof again is by induction on the degree of nilpotency of $\mu$. For two-step nilpotent brackets, the result follows from the proof of \cite[Theorem A]{pujia2020positive}.
		
		Now suppose that $\mu$ is $k$-step nilpotent and the result holds for all $(k-1)$-step nilpotent Lie brackets. Let $\eta(t) = \eta_0(t) + \eta_1(t)$, where $(\eta_0(t),\eta_1(t))$ solves (\ref{eqn:BFNilpGaugedNorm}). By induction there exists a sequence $t_l \to \infty$ such that $\eta_1(t_1)$ is convergent and $\frac{d}{dt}\eta_1(t_l) \to 0$. Next, by Corollary \ref{cor:SubConvInductionStep} there exists an increasing sequence $t_l' \to \infty$ such that $(\eta_0(t_l'),\eta_1(t_l')) \to (\bar \eta_0,\bar \eta_1)$ which is a fixed point of (\ref{eqn:BFNilpGaugedNorm}). Let $\tilde \mu(t)$ solve (\ref{eqn:BFNilpNorm}). By Theorem \ref{thm:gaugedBracketFlowEquiv}, there exists a family $k\colon \R_{\geq 0} \to \textrm{U}(\f g,J)$, a function $c\colon \R_{\geq 0} \to \R_{> 0}$ and a (bijective) time re-parametrisation $\tau\colon \R_{\geq 0} \to \R_{\geq0}$ such that
		\[
		\tilde \mu(t) = c(\tau(t))k(\tau(t))\cdot \eta(\tau(t)).
		\]
		By Theorem \ref{thm:BFNilpGaugedGrowth}, $c$ is bounded below and above. This along with compactness of $\textrm{U}(\f g,J)$ allows us to extract a convergent subsequence of $\tilde \mu(\tau^{-1}(t'_l))$ to a semi-algebraic soliton bracket. By Theorem \ref{thm:nilpSolisAlg}, this is an algebraic soliton and therefore a fixed point of (\ref{eqn:BFNilpNorm}) after normalisation.
	\end{proof}
	
We can now prove out main theorem.
	
	\begin{proof}[Proof of Theorem \ref{thm:nilpConvThm}]
	By Theorem \ref{thm:BracketFlowEquiv}, the $\HCFp$ is equivalent up to pull-back by biholomorphisms to the family of Hermitian manifolds corresponding to the brackets evolving under the bracket flow (\ref{eqn:bracketFlow}). Thus, it suffices to prove sub-convergence of this family to a soliton bracket after appropriate rescaling. Suppose $\mu_t$ solves (\ref{eqn:bracketFlow}). Then by Theorem \ref{thm:NilpotentSubConvergence}, there is an increasing sequence of times $t_l \to \infty$ such that $\mu_{t_l}/ \|\mu_{t_l}\|$ converges to a nilpotent, algebraic soliton bracket $\mu_\infty \neq 0$. By \cite[Corollary 6.20 (v)]{lauretConvHomMfds}, the corresponding simply-connected Lie groups converge in the Cheeger-Gromov sense to a non-flat simply-connected complex nilpotent Lie group. Theorem \ref{thm:nilpConvThm} then follows from Theorem \ref{thm:BFNilpGaugedGrowth}, and the fact that scaling a bracket by some $c \in \R$ is equivalent to scaling the metric on the corresponding Hermitian manifold by $c^{-1/2}$ (see \cite[\S 2.1]{lauretRicFlowHomMfd}).
	\end{proof}
	
	\section{$\HCFp$ on almost-abelian complex Lie groups}
	\label{sec:almostAbelian}
		\subsection{Almost-abelian complex  Lie groups}
		\label{subsec:almostAbelian}
	We say a complex Lie group $(G^{2n+2},J)$ is almost-abelian if its Lie algebra $\f{g}$ (viewed as a complex Lie algebra) admits a complex co-dimension one abelian ideal. Call this ideal $V$. We fix a background left-invariant Hermitian metric $\bgm{\cdot}{\cdot}$, and a left-invariant real $\bgm{\cdot}{\cdot}$-orthonormal frame $\{X_i\}_{i=0}^n\cup \{JX_i\}_{i=0}^n$, such that $\{X_i\}_{i=1}^{n}\cup\{JX_i\}_{i=1}^n$ spans $V$. There is then an orthogonal decomposition
	\begin{equation*}
		\label{eqn:almostAbelianDecomp}
		\f{g} = \R X_0 \oplus \R J X_0 \oplus V,
	\end{equation*}
	such that,
	\begin{equation*}
		\mu|_{V\times V} = 0, \qquad \mu(X_0,\f{g})\subset V.
	\end{equation*}
	One can easily check that $\{Z_i := \frac {1}{\sqrt{2}} (X_i - iJX_i)\}_{i=0}^{n} \subset \f g^{1,0}$ is a left-invariant, $\bgm{\cdot}{\cdot}$-unitary frame. Moreover, if we define $V^{1,0} := V\otimes \C \cap \f g^{1,0}$, there exists 
	\[
		A\in \f{gl}(V^{1,0}) \cong \f{gl}_n(\C),
	\]
	such that
	\begin{equation}
		\label{eqn:almostAbelianConst}
		\mu(Z_0,Z_i) = (0\oplus A)Z_i, \qquad \mu(Z_i,Z_j) = 0, \qquad \forall i,j \geq 1,
	\end{equation}
	where the direct sum is with respect to the decomposition 
	\[
		\f g^{1,0} = \C Z_0 \oplus V^{1,0}.
	\]
	Conversely, given $A \in \f{gl}_n(\C)$, we can let $\{Z_i\}_{i=0}^n$ be the standard basis of $\C^{n+1}$, and $\bgm{\cdot}{\cdot}$ the Hermitian inner product making $\{Z_i\}_{i=0}^n$ unitary. We define $\mu_A \in \f g \otimes \Lambda^2\f g ^*$ to be the bracket satisfying (\ref{eqn:almostAbelianConst}).
	
	From now, we will denote by $\mu_A$, the metric almost-abelian, complex Lie algebra arising from this construction. 
	
	\subsection{The behaviour of $\HCFp$ on almost-abelian Lie groups}
	In this section we investigate the asymptotic behaviour of the $\HCFp$ (\ref{eqn:HCF}) in the almost abelian setting and prove Theorem \ref{thm:aaConvThm}. We will do this using the bracket flow (\ref{eqn:bracketFlow}). We first compute the endomorphism driving the evolution, $P_{\mu_A}$.
	\begin{prop}
	\label{prop:almostAbelianP}
	For an almost abelian bracket $\mu_A$, $P_{\mu_A}$ is given by
	\[
	P_{\mu_A} = \begin{pmatrix}
	0&0\\
	0&AA^*
	\end{pmatrix},
	\]
	where $A^* \in \mathfrak{gl}_n(\C)$ is the $\bgm{\cdot}{\cdot}$-adjoint of $A$, and the blocks correspond to the subspace spanned by $Z_0$ and the abelian ideal respectively.
	\end{prop}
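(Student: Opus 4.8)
The plan is to compute $P_{\mu_A}$ directly from Corollary~\ref{cor:PmuAdjForm}, which gives $P_{\mu_A} = \mu_A\mu_A^*$. Since $\Theta$, and hence $P_{\mu_A}$, is $J$-invariant, $P_{\mu_A}\in\f{gl}(\f g,J)$ is complex linear and therefore determined by its restriction to $\f g^{1,0}$, on which $\mu_A$ is exactly the map $\Lambda^2\f g^{1,0}\to\f g^{1,0}$ recorded in (\ref{eqn:almostAbelianConst}). I would set $\widetilde A := 0\oplus A\in\f{gl}(\f g^{1,0})$ relative to $\f g^{1,0}=\C Z_0\oplus V^{1,0}$, so that (\ref{eqn:almostAbelianConst}) reads $\mu_A(Z_0\wedge Z_j)=\widetilde A Z_j$ for $j\geq 1$ and $\mu_A(Z_i\wedge Z_j)=0$ for $i,j\geq 1$. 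Recall also that $\{Z_i\}_{i=0}^n$ is a unitary frame of $\f g^{1,0}$ and $\{Z_i\wedge Z_j\}_{i<j}$ a unitary basis of $\Lambda^2\f g^{1,0}$.

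Then I would reproduce the chain of equalities in the proof of Corollary~\ref{cor:PmuAdjForm}, but now with almost all summands killed by the normal form. For $X\in\f g^{1,0}$,
\[
P_{\mu_A}X = \sum_{i<j}\bgm{X}{\B{\mu_A(Z_i\wedge Z_j)}}\,\mu_A(Z_i\wedge Z_j),
\]
and since only the pairs $(0,j)$ with $j\geq 1$ contribute,
\[
P_{\mu_A}X = \sum_{j\geq 1}\bgm{X}{\B{\widetilde A Z_j}}\,\widetilde A Z_j
= \widetilde A\Bigl(\sum_{j\geq 1}\bgm{\widetilde A^{*}X}{\B{Z_j}}\,Z_j\Bigr) = \widetilde A\widetilde A^{*}X ,
\]
using that $\{Z_j\}_{j\geq 1}$ is a unitary basis of $V^{1,0}$. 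Hence $P_{\mu_A}=\widetilde A\widetilde A^{*}$, and $\widetilde A=0\oplus A$ gives $\widetilde A\widetilde A^{*}=0\oplus AA^{*}$, which is precisely the asserted block matrix (the top-left $1\times 1$ block corresponding to $\C Z_0$, the bottom-right $n\times n$ block to the abelian ideal). In particular $P_{\mu_A}Z_0=0$ and $P_{\mu_A}$ maps $V^{1,0}$ into itself, so there are no off-diagonal blocks.

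I do not expect a genuine obstacle here: the proposition is a short computation once Corollary~\ref{cor:PmuAdjForm} and the normal form (\ref{eqn:almostAbelianConst}) are in hand. The only two points needing a little care are (i) the reduction to $\f g^{1,0}$, which is legitimate because $P_{\mu_A}$ commutes with $J$, and (ii) maintaining the Hermitian-inner-product and adjoint conventions of Section~\ref{sec:bracketFlow} consistently when forming $\mu_A^{*}$ and $\widetilde A^{*}$; with those conventions the displayed identities hold verbatim, exactly as in the proof of Corollary~\ref{cor:PmuAdjForm}. An entirely parallel argument starting from Lemma~\ref{lem:PmuForm} would give the same conclusion.
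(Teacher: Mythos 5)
Your computation is correct and is essentially identical to the paper's own proof: both start from Lemma \ref{lem:PmuForm} (equivalently Corollary \ref{cor:PmuAdjForm}), note that only the pairs $(0,j)$ contribute, and move the adjoint across the inner product to obtain $(0\oplus A)(0\oplus A^*) = 0\oplus AA^*$. No gaps.
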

	\begin{proof}
	For $X \in \f g^{1,0} = \C Z_0 \oplus V^{1,0}$, by virtue of Lemma \ref{lem:PmuForm} we simply compute
	\[
	\begin{split}
	P_{\mu_A}X &= \sum_{i<j} g( X,\B{\mu_A(Z_i,Z_j)})\mu(Z_i,Z_j)\\
	&= (0\oplus A)\sum_j g( (0\oplus A)^*X,\B{Z_j}) Z_j\\
	&= (0\oplus A)(0\oplus A^*)X,
	\end{split}
	\]
	as required.
	\end{proof}
	We see that $P_{\mu_A}$ preserves the abelian ideal $V$, thus we obtain the following
	\begin{cor}
		\label{cor:muArel}
		Suppose $(\mu_t)_{t\in I}$ solves the bracket flow equation (\ref{eqn:bracketFlow}) on $0\in I \subset \R$ starting at $\mu_0 = \mu_A$. Then, $\mu_t$ remains almost abelian and is given by
		\[
			\mu_t(Z_0, \cdot) = 0\oplus A_t ,\qquad \mu_t(Z_i , Z_j) = 0, \qquad \forall i,j > 1,
		\]
		where $(A_t)_{t\in I}$ solves
		\begin{equation}
		\label{eqn:aabf}
				\dot A_t = A_t[A_t,A_t^*],\qquad A_0 = A.
		\end{equation}
	\end{cor}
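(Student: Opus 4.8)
The plan is to verify directly that the almost-abelian form of the bracket is preserved by the flow, and then appeal to uniqueness of solutions of the bracket flow ODE.

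First I would let $(A_t)_t$ be the unique maximal solution (by standard ODE theory) of the matrix equation $\dot A_t = A_t[A_t,A_t^*]$ with $A_0 = A$, and define a curve of brackets $\hat\mu_t$ by the almost-abelian formulas (\ref{eqn:almostAbelianConst}) with $A$ replaced by $A_t$, i.e. $\hat\mu_t(Z_0,Z_i) = (0\oplus A_t)Z_i$ and $\hat\mu_t(Z_i,Z_j) = 0$ for $i,j\geq 1$, extended as in Section \ref{subsec:almostAbelian}. Exactly as in the construction of $\mu_A$, each $\hat\mu_t$ automatically lies in $\vcla$: the Jacobi identity is immediate since $V$ is an abelian ideal of complex codimension one with $\hat\mu_t(X_0,\f g)\subset V$, and $J$-compatibility holds because $\hat\mu_t$ is built from $\C$-linear data on $\f g^{1,0}$. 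Moreover $t\mapsto \hat\mu_t$ is smooth.

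Next I would compute $-\pi(P_{\hat\mu_t})\hat\mu_t$ and compare it with $\dot{\hat\mu}_t$. By Proposition \ref{prop:almostAbelianP}, $P_{\hat\mu_t} = 0\oplus A_tA_t^*$. Using $\pi(B)\nu = B\nu - \nu\,\id\wedge B$ and evaluating on the frame $\{Z_i\}$, on a pair $(Z_i,Z_j)$ with $i,j\geq 1$ every term vanishes because $\hat\mu_t$ and $A_tA_t^*$ preserve $V^{1,0}$ and $\hat\mu_t|_{V^{1,0}\times V^{1,0}} = 0$; on a pair $(Z_0,Z_i)$ one obtains
\[
[\pi(P_{\hat\mu_t})\hat\mu_t](Z_0,Z_i) = A_tA_t^*A_tZ_i - A_tA_tA_t^*Z_i = A_t[A_t^*,A_t]Z_i = -A_t[A_t,A_t^*]Z_i .
\]
Hence $-\pi(P_{\hat\mu_t})\hat\mu_t$ acts on $(Z_0,Z_i)$ by $A_t[A_t,A_t^*]Z_i = \dot A_tZ_i = \dot{\hat\mu}_t(Z_0,Z_i)$, all remaining $(1,0)$-components match trivially, and the $(0,1)$-components are the conjugates while the mixed brackets vanish on a complex Lie group; so $\hat\mu_t$ solves the bracket flow (\ref{eqn:bracketFlow}) with $\hat\mu_0 = \mu_A$.

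Finally, since the bracket flow is a smooth autonomous ODE on the finite-dimensional space $\f g\otimes\Lambda^2\f g^*$, uniqueness of solutions gives $\mu_t = \hat\mu_t$ wherever both are defined, and a standard open--closed continuation argument upgrades this to all of $I$; evaluating shows $\mu_t$ stays almost-abelian with structure constant $A_t$ solving (\ref{eqn:aabf}). I do not expect a genuine obstacle here: the only points needing care are checking that the ansatz really stays inside $\vcla$ (so that $P_{\hat\mu_t}$ and the flow are even defined on it) and tracking the sign through the identity $A_t[A_t^*,A_t] = -A_t[A_t,A_t^*]$ in the curvature computation.
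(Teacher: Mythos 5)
Your proposal is correct and follows essentially the same route as the paper: plug the almost-abelian ansatz driven by the matrix ODE $\dot A_t = A_t[A_t,A_t^*]$ into the bracket flow, use $P_{\mu_{A_t}} = 0\oplus A_tA_t^*$ from Proposition \ref{prop:almostAbelianP} to check that it satisfies (\ref{eqn:bracketFlow}), and conclude by uniqueness of ODE solutions. The sign bookkeeping $-\pi(P)\mu(Z_0,Z_i) = A_t[A_t,A_t^*]Z_i$ matches the paper's computation exactly.
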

	\begin{proof}
		Suppose $\mu_t$ satisfies (\ref{eqn:almostAbelianConst}) for the matrix $A_t$ evolving by (\ref{eqn:aabf}). Clearly $\mu_0 = \mu$. Further, for $i,j \geq 1$, $\dot \mu_t(Z_i,Z_j) = 0$ and,
		\[
		\begin{split}
		\dot \mu_t(Z_0,Z_i) &= (0 \oplus \dot A_t)Z_i\\
							&= (0 \oplus A_t[A_t,A_t^*])Z_i\\
							&=-(0\oplus A_tA_t^*)(0 \oplus A_t)Z_i + (0\oplus A_t)(0 \oplus A_tA_t^*)Z_i\\
							&= -P_{\mu_t}\mu_t(Z_0,Z_i) + \mu(Z_0,P_{\mu_t}Z_i) + \mu(P_{\mu_t}Z_0,Z_i)\\
							&= -(\pi(P_{\mu_t})\mu_t)(Z_0,Z_i).
		\end{split}
		\]
		Thus, $\mu_t$ satisfies (\ref{eqn:bracketFlow}) and uniqueness of ODE solutions implies the result.
	\end{proof}
	Our task is now to analyse the ODE (\ref{eqn:aabf}). We start with the following
	\begin{lem}
		\label{lem:specPres}
		Suppose $A_t$ solves (\ref{eqn:aabf}), then $\operatorname{spec}A_t = \operatorname{spec}A_0$.
	\end{lem}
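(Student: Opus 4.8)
The plan is to show that the spectrum of $A_t$ is preserved by exhibiting the flow (\ref{eqn:aabf}) as an \emph{isospectral} deformation, i.e. by writing $\dot A_t = [A_t, B_t]$ for some (possibly $t$-dependent) matrix $B_t$, which is the classical Lax-pair criterion for spectrum preservation. Indeed, from $\dot A_t = A_t[A_t,A_t^*] = A_t(A_tA_t^* - A_t^*A_t)$ one computes $A_t A_t^* A_t - A_t A_t^* A_t = 0$... more carefully: $[A_t, A_t A_t^*] = A_t(A_tA_t^*) - (A_tA_t^*)A_t = A_t^2A_t^* - A_tA_t^*A_t$, whereas $A_t[A_t,A_t^*] = A_t^2A_t^* - A_tA_t^*A_t$. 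So $\dot A_t = [A_t, A_tA_t^*]$, and the flow is genuinely of Lax type with $B_t := A_tA_t^*$.

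Given this, the standard argument runs as follows. First I would solve the auxiliary linear ODE $\dot g_t = B_t g_t$, $g_0 = \id$, on the same interval $I$; since $B_t$ depends smoothly (indeed polynomially) on $A_t$, which is defined on $I$, standard linear ODE theory gives a smooth solution $g_t \in \operatorname{GL}_n(\C)$ on all of $I$ (invertibility is automatic for solutions of a linear ODE with initial condition $\id$). Then I would check that $C_t := g_t A_0 g_t^{-1}$ satisfies the same ODE as $A_t$: differentiating, $\dot C_t = \dot g_t A_0 g_t^{-1} - g_t A_0 g_t^{-1}\dot g_t g_t^{-1} = B_t C_t - C_t B_t = [B_t, C_t] = -[C_t, B_t]$. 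Hmm — sign: I want $\dot C_t = [C_t, B_t]$. Re-examining: $\tfrac{d}{dt}(g_t^{-1}) = -g_t^{-1}\dot g_t g_t^{-1} = -g_t^{-1}B_t$, so $\dot C_t = B_t g_t A_0 g_t^{-1} - g_t A_0 g_t^{-1} B_t = B_t C_t - C_t B_t = [B_t, C_t]$, and the Lax equation above is $\dot A_t = [A_t, B_t] = -[B_t, A_t]$. This is a mismatch, so the correct auxiliary equation is $\dot g_t = -B_t g_t$ (equivalently conjugate by $g_t^{-1}$); redoing the computation with that sign gives $\dot C_t = -B_t C_t + C_t B_t = [C_t, B_t] = [C_t, A_tA_t^*]$, and since $C_0 = A_0$, provided one also verifies $B_t$ may be replaced by $C_tC_t^*$ along the conjugation (it can, because conjugation by the specific $g_t$ arising here will be checked to be unitary — see below), uniqueness of ODE solutions forces $C_t = A_t$. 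Hence $A_t = g_t A_0 g_t^{-1}$ is conjugate to $A_0$ for every $t\in I$, so $\operatorname{spec} A_t = \operatorname{spec} A_0$.

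The one subtlety — and the main thing to get right — is the interplay between $A_t A_t^*$ (which involves the \emph{fixed} background Hermitian inner product through the adjoint $*$) and the conjugating matrix $g_t$: for the uniqueness step to close, one needs the driving matrix to be expressible purely in terms of $C_t$ and the same $*$, which works because the flow $\dot g_t = -A_tA_t^* g_t$ has $A_tA_t^*$ Hermitian, and $(\dot g_t) g_t^{-1} + g_t^{-1}$... the cleaner route is: note $A_tA_t^*$ is self-adjoint, so $g_t$ solving $\dot g_t = (A_tA_t^*)g_t$ is not unitary, but one can instead argue directly without conjugation — differentiate $\operatorname{tr}(A_t^k (A_t^*)^0)$, or better, observe that the characteristic polynomial coefficients $\operatorname{tr}(\Lambda^j A_t)$ satisfy $\tfrac{d}{dt}\operatorname{tr}(\Lambda^j A_t) = \operatorname{tr}(\Lambda^j[\cdot])$-type identities that vanish because $\tfrac{d}{dt}\operatorname{tr}(p(A_t)) = \operatorname{tr}(p'(A_t)\dot A_t) = \operatorname{tr}(p'(A_t)[A_t, B_t]) = \operatorname{tr}([p'(A_t), A_t] B_t) = 0$ for any polynomial $p$, since $p'(A_t)$ commutes with $A_t$. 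Taking $p$ ranging over $x, x^2, \dots, x^n$ shows all power sums $\operatorname{tr}(A_t^k)$ are constant, hence (Newton's identities) all elementary symmetric functions of the eigenvalues, hence the characteristic polynomial, hence $\operatorname{spec} A_t = \operatorname{spec} A_0$. This trace argument is cleanest and sidesteps the conjugation entirely; I would present it as the main proof and relegate the Lax observation to motivation.
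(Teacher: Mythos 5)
Your final argument is correct and is essentially the paper's own proof: the paper likewise observes that $\frac{d}{dt}\operatorname{tr}(A_t^n)=n\operatorname{tr}(A_t^{n-1}[A_t,A_tA_t^*])=0$ by cyclicity of the trace, and concludes via Newton's identities that the spectrum is preserved. The Lax-pair/conjugation detour (with its sign issues) is unnecessary, as you yourself note, and can safely be dropped.
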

	\begin{proof}
		Note that we have for $n \geq 1$
		\[
		\frac{\textrm{d}}{\textrm{d} t} \operatorname{tr}(A_t^n) = n \operatorname{tr}(A_t^{n-1}\dot {A}_t) = n\operatorname{tr}(A_t^{n-1}[A_t,A_tA_t^*]) = 0.
		\]
		Thus $\operatorname{tr}(A_t^n)$ is constant for all positive powers $n$. The conclusion follows from Newton's identities.
	\end{proof}
	Furthermore, we can observe that the norm of $A_t$ is a monotone quantity.
	\begin{prop}
		\label{prop:normMono}
		Suppose $A_t$ solves (\ref{eqn:aabf}). Then the quantity $\|A_t\|^2 = \operatorname{tr}(A_tA_t^*)$ is monotonically decreasing and stationary if and only if $A_t$ is normal.
	\end{prop}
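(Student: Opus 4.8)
The plan is to compute $\frac{d}{dt}\|A_t\|^2$ directly using the evolution equation (\ref{eqn:aabf}) and show it is a non-positive multiple of a norm that vanishes exactly when $A_t$ is normal. We have $\|A_t\|^2 = \operatorname{tr}(A_tA_t^*)$, so differentiating gives
\[
\frac{d}{dt}\|A_t\|^2 = \operatorname{tr}(\dot A_t A_t^*) + \operatorname{tr}(A_t \dot A_t^*) = 2\operatorname{Re}\operatorname{tr}(\dot A_t A_t^*),
\]
since the two terms are complex conjugates of one another. Substituting $\dot A_t = A_t[A_t,A_t^*]$ yields $\operatorname{tr}(\dot A_t A_t^*) = \operatorname{tr}(A_t[A_t,A_t^*]A_t^*) = \operatorname{tr}([A_t,A_t^*]A_t^*A_t)$ by cyclicity of the trace.

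The key algebraic step is to recognize that, writing $C := [A_t,A_t^*]$ (which is Hermitian), one has $\operatorname{tr}(C A_t^* A_t) = \operatorname{tr}(C(A_t^*A_t))$, and similarly the conjugate term gives $\operatorname{tr}(C A_t A_t^*)$. Adding and using $\operatorname{tr}(C(A_t^*A_t)) - \operatorname{tr}(C(A_tA_t^*)) $ relates to $\operatorname{tr}(C \cdot C) = \|C\|^2$. More precisely: $\operatorname{tr}(C A_t A_t^*) - \operatorname{tr}(C A_t^* A_t) = \operatorname{tr}(C[A_t,A_t^*]) = \operatorname{tr}(C^2) = \|C\|^2 \geq 0$ — wait, I should track the sign carefully. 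From $\operatorname{tr}(\dot A_t A_t^*) = \operatorname{tr}([A_t,A_t^*] A_t^* A_t)$ and its conjugate $\operatorname{tr}(A_t \dot A_t^*) = \operatorname{tr}([A_t,A_t^*] A_t A_t^*)$ (using that $[A_t,A_t^*]^* = -[A_t,A_t^*]$... actually $[A,A^*]^* = [A, A^*]$ is Hermitian; I'd need to recompute $\dot A_t^* = [A_t^*,A_t]A_t^* = -[A_t,A_t^*]A_t^*$, hmm), the sum collapses to a multiple of $-\operatorname{tr}([A_t,A_t^*]^2) = -\|[A_t,A_t^*]\|^2$. Since $[A_t,A_t^*]$ is Hermitian, $\operatorname{tr}([A_t,A_t^*]^2) = \|[A_t,A_t^*]\|^2 \geq 0$, so the derivative is $\leq 0$ and vanishes iff $[A_t,A_t^*] = 0$, i.e. iff $A_t$ is normal.

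The one point that requires genuine care — and is the main (mild) obstacle — is getting the bookkeeping of adjoints and signs exactly right: $\dot A_t^*$ must be computed as $(A_t[A_t,A_t^*])^* = [A_t,A_t^*]^* A_t^* = [A_t^*,A_t] A_t^*$, and then one must verify that the cross terms in $\frac{d}{dt}\operatorname{tr}(A_tA_t^*)$ combine with the correct sign to produce $-\operatorname{tr}([A_t,A_t^*]^2)$ rather than $+\operatorname{tr}([A_t,A_t^*]^2)$. I would do this by hand once, carefully, using cyclicity of the trace at each step. The rest is immediate: monotone decrease follows, and the stationarity characterization "$\frac{d}{dt}\|A_t\|^2 = 0 \iff [A_t,A_t^*] = 0 \iff A_t$ normal" follows because $\|[A_t,A_t^*]\|^2 = 0$ precisely when the commutator vanishes.
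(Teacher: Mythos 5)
Your proposal is correct, and it takes a slightly different (and in fact sharper) route than the paper. The paper computes $\tfrac{d}{dt}\operatorname{tr}(A_tA_t^*) = 2\operatorname{tr}(A_t[A_t,A_t^*]A_t^*) = 2\operatorname{tr}(A_t^*A_tA_tA_t^*) - 2\|A_tA_t^*\|^2$ and then applies Cauchy--Schwarz to get $\leq 2\|A_t^*A_t\|\,\|A_tA_t^*\| - 2\|A_tA_t^*\|^2 = 0$, characterising equality by $A_tA_t^* = cA_t^*A_t$ and forcing $c=1$ by taking traces. You instead aim for the exact identity, and it does hold: since $[A,A^*]^* = [A,A^*]$ is Hermitian, one has $\dot A^* = [A,A^*]A^*$, so $A\dot A^* = A[A,A^*]A^* = \dot A A^*$ and
\[
\frac{d}{dt}\|A_t\|^2 = 2\operatorname{tr}\bigl([A_t,A_t^*]A_t^*A_t\bigr) = \operatorname{tr}\bigl([A_t,A_t^*](A_t^*A_t - A_tA_t^*)\bigr) = -\operatorname{tr}\bigl([A_t,A_t^*]^2\bigr) = -\|[A_t,A_t^*]\|^2,
\]
where the middle step uses $\operatorname{tr}([A,A^*]A^*A) = -\operatorname{tr}([A,A^*]AA^*)$, which follows from cyclicity of the trace. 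This gives monotonicity and the stationarity criterion in one stroke, with no need to analyse an equality case. The one genuine slip in your writeup is the computation $[A_t,A_t^*]^* = [A_t^*,A_t]$: in fact $(AA^*-A^*A)^* = AA^*-A^*A$, so the commutator is Hermitian (as you note elsewhere), not anti-Hermitian; with that corrected, the bookkeeping you flag as the main obstacle closes cleanly and yields the constant $-1$ exactly.
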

	\begin{proof}
		Using (\ref{eqn:aabf}) and differentiating $\|A_t\|^2$ we get
		\[
		\begin{split}
		\frac{\textrm{d}}{\textrm{d}t} \textrm{tr}(A_tA_t^*) &= 2\textrm{tr}(\dot A_t A_t^*)\\
		&= 2 \textrm{tr}(A_t[A_t,A_t^*]A_t^*)\\
		&= 2\textrm{tr}(A_t^*A_tA_tA_t^*) - 2\|A_tA_t^*\|^2\\
		&\leq 2\|A_t^*A_t\|\|A_tA_t^*\| - 2\|A_tA_t^*\|^2\\
		&= 0.
		\end{split}
		\]
	Here we used the Cauchy-Swartz inequality. Equality holds if and only if $A_tA_t^* =cA_t^*A_t$ for some $c \in \C$. $A_t$ is clearly normal if it is $0$. Otherwise, taking the trace on both sides implies $c = 1$ and so $[A_t,A_t^*] = 0$.
	\end{proof}
	\begin{cor}
		\label{cor:AconvToNormMat}
		The solution $A_t$ exists for all $t\in [0,\infty)$. Moreover, the $\omega$-limit of $A_0$ under the flow (\ref{eqn:aabf}) consists of a single $U(n)$-orbit of normal operators.
	\end{cor}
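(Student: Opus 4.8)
The plan is to squeeze everything out of the two monotonicity statements just proved. For long-time existence I would argue as follows: by Proposition \ref{prop:normMono} the function $t\mapsto\|A_t\|^2$ is non-increasing, so $\|A_t\|\le\|A_0\|$ throughout the maximal interval of existence; the right-hand side of (\ref{eqn:aabf}) is polynomial in the entries of $A_t$ and their conjugates, hence smooth, so the usual ODE escape lemma rules out finite-time blow-up of a bounded solution, and $A_t$ is therefore defined on all of $[0,\infty)$. This part is routine.

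For the $\omega$-limit, the trajectory $\{A_t:t\ge 0\}$ is precompact (it lies in the closed ball of radius $\|A_0\|$), so $\omega(A_0)$ is non-empty, compact and invariant under the flow (\ref{eqn:aabf}). Since $\|A_t\|^2$ is non-increasing and bounded below it converges to some $\ell\ge 0$, whence $\|B\|^2=\ell$ for every $B\in\omega(A_0)$. I would then fix such a $B$, run the flow (\ref{eqn:aabf}) from $B_0=B$, note that invariance keeps $B_t\in\omega(A_0)$ and hence $\|B_t\|^2\equiv\ell$, and invoke the equality case of Proposition \ref{prop:normMono} to conclude $[B,B^*]=0$. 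So every element of $\omega(A_0)$ is normal, and is in fact a fixed point of (\ref{eqn:aabf}) because $\dot B_t=B_t[B_t,B_t^*]=0$.

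It remains to see that these normal operators fill out at most one $U(n)$-orbit. By Lemma \ref{lem:specPres} we have $\operatorname{spec}A_t=\operatorname{spec}A_0$ for all $t$, and since the characteristic polynomial depends continuously (indeed polynomially) on the matrix, $\operatorname{spec}B=\operatorname{spec}A_0$ (with multiplicities) for every $B\in\omega(A_0)$. By the spectral theorem a normal operator is $U(n)$-conjugate to the diagonal matrix of its eigenvalues, so any two normal operators with the same spectrum are $U(n)$-conjugate; applying this to the elements produced above shows $\omega(A_0)$ is contained in the single $U(n)$-orbit of $\operatorname{diag}(\operatorname{spec}A_0)$, which is what is needed for Theorem \ref{thm:aaConvThm}.

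I do not expect a serious obstacle: the whole argument is downstream of Proposition \ref{prop:normMono} and Lemma \ref{lem:specPres}. The one place to be slightly careful is the invariance step in the second paragraph — one should push a limit point along the flow and use that the limit of the monotone quantity is already attained on $\omega(A_0)$, rather than trying to differentiate $\|\cdot\|$ directly along the $\omega$-limit. (It is also worth noting that this reasoning only yields containment in one orbit; the $\omega$-limit can be a proper subset of it, for instance a single point when $A_0$ is already normal.)
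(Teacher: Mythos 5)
Your proof is correct and follows essentially the same route as the paper: boundedness from Proposition \ref{prop:normMono} gives long-time existence, the equality case of that proposition forces limit points to be normal, and Lemma \ref{lem:specPres} plus the spectral theorem places them all in one $U(n)$-orbit. You are in fact more careful than the paper at the one delicate step (propagating a limit point along the flow inside the invariant $\omega$-limit set to invoke the rigidity case of the monotonicity), and your closing remark that one only gets containment in a single orbit is a fair reading of what both arguments actually establish.
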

	\begin{proof}
	Since $\|A_t\|^2$ is decreasing, $A_t$ remains in a compact subset and long-time existence follows. Suppose that for some sequence of times $t_k \to \infty$, $A_{t_k} \to A_\infty$. Then by Proposition \ref{prop:normMono}, $A_\infty$ is normal. Suppose that $A'_\infty$ is the limit of $A_t$ on another sequence of times. Then by Lemma \ref{lem:specPres}, $\operatorname{Spec}(A_\infty) = \operatorname{Spec}(A_0) = \operatorname{Spec}(A_\infty')$. Thus, since $A_\infty$ and $A_\infty'$ are normal, they must be unitarily similar. 
	\end{proof}

	We note here that long-time existence was proven by Ustinovskiy in \cite{UstHCFHom2017} for all solvable Lie algebras, however our proof gives precise information about the asymptotic behaviour. Before we prove Theorem \ref{thm:aaConvThm}, let us first observe one more fact about the long time behaviour of the flow.
	
	\begin{prop}
	\label{prop:convTo0}
	Let $\omega(A_0)$ denote $\omega$-limit of $A_0$ under (\ref{eqn:aabf}). Then $\omega(A_0) = \{0\}$ if and only if $A_0$ is nilpotent.	
	\end{prop}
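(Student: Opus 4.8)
The plan is to combine the two structural facts already established about the flow (\ref{eqn:aabf}): that the spectrum of $A_t$ is conserved (Lemma \ref{lem:specPres}), and that every element of $\omega(A_0)$ is a normal operator (Corollary \ref{cor:AconvToNormMat}). Together these reduce the statement to the elementary observation that a normal operator whose only eigenvalue is $0$ must itself be $0$. So there is essentially no analytic content left to supply; the argument is a short bookkeeping of the preceding lemmas plus one linear-algebra dichotomy.

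For the ``if'' direction, suppose $A_0$ is nilpotent, so $\operatorname{spec} A_0 = \{0\}$. By Lemma \ref{lem:specPres}, $\operatorname{spec} A_t = \{0\}$ for all $t \geq 0$, and since the coefficients of the characteristic polynomial depend continuously on the matrix entries, this persists in the limit: any $A_\infty \in \omega(A_0)$ satisfies $\operatorname{spec} A_\infty = \{0\}$. But $A_\infty$ is normal by Corollary \ref{cor:AconvToNormMat}, hence unitarily diagonalisable, so $A_\infty = 0$; that is, $\omega(A_0) = \{0\}$.

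For the ``only if'' direction, suppose $\omega(A_0) = \{0\}$ and choose a sequence $t_k \to \infty$ with $A_{t_k} \to 0$. One can argue directly from the computation in the proof of Lemma \ref{lem:specPres}: $\operatorname{tr}(A_0^n) = \operatorname{tr}(A_{t_k}^n) \to \operatorname{tr}(0^n) = 0$ for every $n \geq 1$, so all power traces of $A_0$ vanish and $A_0$ is nilpotent by Newton's identities. (Alternatively, $\operatorname{spec} A_0 = \operatorname{spec} A_{t_k} \to \operatorname{spec} 0 = \{0\}$ by continuity, giving the same conclusion.)

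The only point deserving a word of care is transporting the spectral information from the trajectory to its $\omega$-limit, and this is handled by continuity of the power traces (equivalently, of the characteristic polynomial) in the matrix. Everything else—long-time existence, convergence of the $\omega$-limit to a single $\operatorname{U}(n)$-orbit, and spectral rigidity along the flow—has already been proved, so I expect this proposition to be essentially immediate once those are in hand.
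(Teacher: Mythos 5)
Your proposal is correct and follows essentially the same route as the paper: both reduce the statement to Lemma \ref{lem:specPres} (spectral invariance), Corollary \ref{cor:AconvToNormMat} (normality of $\omega$-limits), and the observation that a normal matrix with spectrum $\{0\}$ vanishes. Your explicit remark about continuity of the power traces when passing to the limit just makes precise a step the paper leaves implicit.
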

	\begin{proof}
		Suppose $A_t$ solves (\ref{eqn:aabf}) and that for some sequence $t_k \to \infty$, $A_{t_k} \to A_\infty$. By Lemma \ref{lem:specPres} we have $\textrm{spec}A_\infty = \textrm{spec}A_0$. Thus $A_0$ is nilpotent if and only if $A_\infty$ is nilpotent. But by Corollary \ref{cor:AconvToNormMat}, $A_\infty$ is normal, so it is nilpotent if and only if it is $0$.
	\end{proof}

	We are now in a position to prove \ref{thm:aaConvThm}.
	\begin{proof}[Proof of Theorem \ref{thm:aaConvThm}]
	Let $(G,J,g)$ be an almost-abelian, non-nilpotent, complex Lie group with left-invariant Hermitian metric $g$ and suppose that $g_t$ solves (\ref{eqn:HCF}) with $g_0 = g$. Let $t_k \to \infty$ be an increasing sequence of times. If $\mu_t$ is the solution to the corresponding bracket flow (\ref{eqn:bracketFlow}), then by Proposition \ref{prop:normMono}, $\mu_{t_k}$ converges in the vector space topology after passing to a subsequence. This limit is non-zero by Proposition \ref{prop:convTo0}. By Corollary \ref{cor:AconvToNormMat}, the limit is a steady soliton bracket. Thus, by \cite[Corollary 6.20]{lauretConvHomMfds}, the corresponding limiting Lie group with left-invariant Hermitian metric $(G_\infty,J_\infty,g_\infty)$ is locally isometric to a steady soliton. We now claim that $(G_\infty,J_\infty,g_\infty)$ is simply-connected. Indeed, since $\mu_t$ remains almost-abelian, $G_{\mu_t}$ is diffeomorphic to $\mathbb{R}^{2(n+1)}$. Thus, by \cite[Theorem 8.3]{bohmLafImmRicFl}, $G_\infty$ is also diffeomorphic to $\mathbb{R}^{2(n+1)}$, and therefore simply-connected as claimed. It follows that $(G_\infty,J_\infty,g_\infty)$ is therefore globally isometric to a steady soliton.
	\end{proof}

	\section{Almost-abelian $\HCFp$ solitons}
	\label{sec:solitons}
	In this section we put together the necessary results to prove Theorem \ref{thm:solitonClass}. We first specialise to the case of solitons $\mu_A$ that are nilpotent. To do this, we will explicitly construct a certain canonical form for nilpotent almost abelian solitons that depends only on the Jordan form of $A$. This will prove both uniqueness and existence of almost-abelian nilpotent soliton metrics. To make this more precise, observe the following
	\begin{lem}
		\label{lem:aaOrbits}
		Suppose $A \in \f{gl}_n(\C)$ and denote by $\mu_A$ the corresponding almost-abelian complex Lie algebra. Then for any $A' \in \f{gl}_n(\C)$, $\mu_{A'} \in \operatorname{GL}_{n+1}(\C)\cdot\ \mu_A$ if and only if $A' = c h^{-1}Ah$ for some $c \in \C$ and $h \in \operatorname{GL}_n(\C)$. In this case, $\mu_{A'} \in \operatorname{U}(n+1)\cdot \mu_A$ if and only if $h \in \operatorname{U}(n)$ and $|c| = 1$.
		
	\end{lem}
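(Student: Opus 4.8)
The plan is to identify membership of $\mu_{A'}$ in the $\operatorname{GL}_{n+1}(\C)$-orbit of $\mu_A$ with the existence of a complex Lie algebra isomorphism between the two almost-abelian algebras, and then to pin down the abelian ideal $V^{1,0}$ intrinsically so that any such isomorphism is forced into block form with respect to $\f g^{1,0} = \C Z_0 \oplus V^{1,0}$.

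For the ``if'' direction I would simply exhibit the required element of $\operatorname{GL}(\f g,J)\cong\operatorname{GL}_{n+1}(\C)$: given $A' = c\,h^{-1}Ah$, take $\tilde h$ acting on $\f g^{1,0}$ by $\tilde h Z_0 = c^{-1}Z_0$ and $\tilde h|_{V^{1,0}} = h^{-1}$, and check directly from (\ref{eqn:almostAbelianConst}) and the action formula (\ref{eqn:GLact}) that $(\tilde h\cdot\mu_A)(Z_0,Z_i) = c\,(h^{-1}Ah)Z_i = A'Z_i$ while the brackets among the $Z_i$ remain zero, so $\tilde h\cdot\mu_A = \mu_{A'}$. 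When $|c| = 1$ and $h\in\operatorname{U}(n)$ this $\tilde h$ is unitary, giving the $\operatorname{U}(n+1)$ statement. This step is a short computation, and I expect no difficulty.

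For the ``only if'' direction, suppose $\mu_{A'} = \varphi\cdot\mu_A$ with $\varphi\in\operatorname{GL}(\f g,J)$, equivalently $\varphi$ is a Lie isomorphism between $(\f g^{1,0},\mu_A)$ and $(\f g^{1,0},\mu_{A'})$. The heart of the argument is to characterise the codimension-one abelian ideals: a codimension-one subspace $\f h$ is an ideal precisely when it contains $\operatorname{im}A = [\f g^{1,0},\f g^{1,0}]_{\mu_A}$ (the quotient being one-dimensional, hence abelian), and such a subspace $\f h\ne V^{1,0}$ is abelian only if $A$ annihilates $\f h\cap V^{1,0}$, which by a dimension count forces $\operatorname{rank}A\le 1$ and $\operatorname{im}A\subseteq\ker A$, i.e.\ $A^2 = 0$. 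Hence, as long as $A^2\ne 0$ --- a condition invariant under isomorphism, since it is equivalent to $\mu_A$ being neither abelian nor $2$-step nilpotent --- $V^{1,0}$ is the \emph{unique} codimension-one abelian ideal of $\mu_A$, and likewise of $\mu_{A'}$, so $\varphi(V^{1,0}) = V^{1,0}$. Then I would write $\varphi Z_0 = \alpha Z_0 + v$ (with $\alpha\ne 0$ by invertibility, $v\in V^{1,0}$) and $h_0 := \varphi|_{V^{1,0}}\in\operatorname{GL}_n(\C)$, evaluate the isomorphism identity on the pair $(Z_0,Z_i)$, and use that $\mu_{A'}$ annihilates $V^{1,0}\times V^{1,0}$ to obtain $h_0 A = \alpha A'h_0$, that is, $A' = c\,h^{-1}Ah$ with $c = \alpha^{-1}$ and $h = h_0^{-1}$. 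For the unitary refinement: a unitary $\varphi$ preserving $V^{1,0}$ also preserves $(V^{1,0})^\perp = \C Z_0$, so $v = 0$, $|\alpha| = 1$ and $h_0\in\operatorname{U}(n)$, giving $|c| = 1$ and $h\in\operatorname{U}(n)$.

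The hard part will be the degenerate case $A^2 = 0$, where $V^{1,0}$ is genuinely no longer the unique codimension-one abelian ideal and the argument above collapses. I expect to handle it by hand: if $A\ne 0$ then necessarily $\operatorname{rank}A = 1$, so $A$ is $\operatorname{GL}_n$-conjugate to a single $2\times2$ nilpotent Jordan block padded by zeros and $\mu_A\cong\f h_3(\C)\oplus\C^{n-2}$; any $\mu_{A'}$ in its orbit is then $2$-step nilpotent with one-dimensional commutator, which forces $\operatorname{rank}A' = 1$ and $A'^2 = 0$, hence $A'$ is $\operatorname{GL}_n$-conjugate to $A$ and $A' = c\,h^{-1}Ah$ with $c = 1$. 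For the $\operatorname{U}(n+1)$ statement here I would use the invariant $\operatorname{tr}P_{\mu_A} = \operatorname{tr}(AA^\ast) = \|A\|^2$ (Proposition \ref{prop:almostAbelianP}) together with the equivariance $P_{k\cdot\mu} = kP_\mu k^\ast$ coming from Corollary \ref{cor:PmuAdjForm}: membership of $\mu_{A'}$ in $\operatorname{U}(n+1)\cdot\mu_A$ forces $\|A'\| = \|A\|$, and two square-zero rank-one matrices of equal Frobenius norm are conjugate by an element of $\operatorname{U}(n)$. Apart from this degenerate case, the proof reduces to routine linear algebra once the correct intrinsic description of $V^{1,0}$ is in place.
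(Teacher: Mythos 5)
Your core computation is the same as the paper's: reduce orbit membership to a Lie algebra isomorphism, put it in block form with respect to $\C Z_0 \oplus V^{1,0}$, and read off $A' = c\,h^{-1}Ah$ (and the unitary refinement from $v=0$). Where you genuinely add something is the justification that the isomorphism may be assumed to preserve $V^{1,0}$: the paper simply asserts that $H$ ``preserves the abelian ideal,'' which is only automatic when the codimension-one abelian ideal is unique. Your characterisation of the codimension-one abelian ideals (they are exactly the hyperplanes containing $\operatorname{im}A$, and a second abelian one forces $\operatorname{rank}A\le 1$ and $A^2=0$) is correct and fills this gap, and your use of the unitary invariant $\operatorname{tr}P_{\mu_A}=\|A\|^2$ to settle the $\operatorname{U}(n+1)$ statement in the degenerate case is a clean device the paper does not need to invoke because it elides the issue.

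There is, however, one concrete error in your case division. In the ``degenerate case $A^2=0$'' you assert that $A\neq 0$ forces $\operatorname{rank}A=1$; this is false for $n\ge 4$ (take $A$ a direct sum of two $2\times 2$ nilpotent Jordan blocks: $A^2=0$ but $\operatorname{rank}A=2$), so $\mu_A$ need not be $\f h_3(\C)\oplus\C^{n-2}$ and your degenerate-case argument does not apply there. The implication $\operatorname{rank}A\le 1$ was a \emph{conclusion} of your uniqueness analysis (from the existence of a second abelian hyperplane), not a consequence of $A^2=0$ alone. The fix is immediate from your own argument: the uniqueness of $V^{1,0}$ fails only when $\operatorname{rank}A\le 1$ \emph{and} $A^2=0$, both of which are isomorphism invariants ($\operatorname{rank}A=\dim[\f g,\f g]$, and $A^2=0$ iff $\mu_A$ is abelian or $2$-step nilpotent). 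So run the main argument whenever $\operatorname{rank}A\ge 2$ or $A^2\neq 0$, and reserve the hands-on treatment for $\operatorname{rank}A\le 1$ with $A^2=0$ (including the trivial subcase $A=0$, which you should state explicitly). With that repartition, and with $c$ restricted to $\C\setminus\{0\}$ as the paper's own proof implicitly does, the proof is complete.
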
 
	
	\begin{proof}
		$\mu_{A'} \in \operatorname{GL}_{n+1}(\C) \cdot \mu_{A}$ if and only if there exists a Lie algebra isomorphism 
		\[
		H \colon (\C^{n+1},\mu_{A'}) \to (\C^{n+1},\mu_{A}).
		\]
		Since $H \in \operatorname{GL}_{n+1}(\C)$ preserves the abelian ideal, we assume that it has the form 
		\[
			H = \begin{pmatrix}c&0\\v&h\end{pmatrix} ,\qquad c\in \C\setminus \{0\},\qquad v\in \C^n,\qquad h\in\operatorname{GL}_{n}(\C),
		\]
		where the blocks correspond to the splitting $\C^{n+1} = \C Z_0 \oplus \C^n$. The first result follows from a simple computation using the fact that $H 0\oplus A' = H\mu_{A'}(Z_0,\cdot) = \mu_{A}(HZ_0,H\cdot)$. The second follows by observing that $H\in \operatorname{U}(n+1)$ if and only if $|c| = 1$, $v = 0$ and $h\in \operatorname{U}(n)$.
	\end{proof}
	
	After this, we will treat the non-nilpotent case and then show that these are the only such almost-abelian solitons. 
	
	Let us observe the following simple result.
	
	\begin{lem}
		\label{lem:aaSignOfLambda}
		Suppose $g$ is left-invariant $\HCFp$-soliton on an almost-abelian complex Lie group with bracket $\mu_A$. If $A$ is nilpotent then $g$ is an expanding soliton ($\lambda < 0$), otherwise, it is a steady soliton ($\lambda = 0$).
	\end{lem}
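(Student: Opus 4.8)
The plan is to compute the soliton constant $\lambda$ directly from the structure of the soliton equation, exploiting the fact that on an almost-abelian Lie algebra $\mu_A$ we have the explicit formula $P_{\mu_A} = \operatorname{diag}(0, AA^*)$ from Proposition \ref{prop:almostAbelianP}. Recall that a left-invariant soliton corresponds to a bracket with $P_{\mu_A} = \lambda\id + \frac12(D + D^*)$ for some $D \in \operatorname{Der}(\mu_A)\cap\f{gl}(\f g,J)$; applying $\tr$ and using $\tr(D) = \tr(D^*)$ gives $\tr(AA^*) = \|A\|^2 = (n+1)\lambda + \tr(D)$, so the sign of $\lambda$ is controlled by $\tr(D)$. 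The key observation is that any derivation $D$ of an almost-abelian algebra preserves the abelian ideal $V$ (since $V$ can be characterized intrinsically, e.g. as a distinguished codimension-one ideal, or because $[V,V]=0$ forces $D$ to respect the lower central/derived structure), so $D$ has block form $\left(\begin{smallmatrix} a & 0 \\ * & D_{V} \end{smallmatrix}\right)$ with respect to $\f g^{1,0} = \C Z_0 \oplus V^{1,0}$, and the derivation condition applied to $\mu_A(Z_0, \cdot) = 0\oplus A$ yields the relation $[D_V, A] = a\cdot A$ (up to the inner automorphism coming from the $*$-block, which does not affect traces). Taking traces in this commutator relation forces $a\cdot \tr(A) = 0$.

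First I would split into the two cases according to $\tr(A)$. If $A$ is nilpotent then $\operatorname{spec}(A) = \{0\}$, hence $\tr(A^k) = 0$ for all $k\ge 1$; more usefully, the relation $[D_V,A] = aA$ shows that $A$ is a "generalized eigenvector" type object, and in the nilpotent case one can iterate: $D_V$ acts on each generalized eigenspace of $A$ and the trace of $D$ can be read off. Alternatively — and this is cleaner — in the nilpotent case the algebra $\mu_A$ is itself nilpotent, and by Theorem \ref{thm:nilpSolisAlg} the soliton is algebraic with $D = D^* \in \operatorname{Der}(\mu_A)$, so $P_{\mu_A} = \lambda\id + D$; taking the trace against $\id$ on the center, where $P_{\mu_A}$ vanishes but $\id$ does not, and using that a nonzero nilpotent algebra cannot be static (by the Proposition just before Theorem \ref{thm:nilpSolisAlg}, a non-abelian solvable group admits no static metric), forces $\lambda \ne 0$; combined with Remark after the soliton definition ($\lambda \le 0$ for solutions existing for all positive times, which holds here by Corollary \ref{cor:AconvToNormMat}), we get $\lambda < 0$.

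For the non-nilpotent case, I would argue that the $\omega$-limit is a nonzero normal operator $A_\infty$ (Corollary \ref{cor:AconvToNormMat}, Proposition \ref{prop:convTo0}), and since $A_\infty$ is normal the bracket $\mu_{A_\infty}$ satisfies $A_\infty A_\infty^* = A_\infty^* A_\infty$, whence $P_{\mu_{A_\infty}}$ commutes with $\operatorname{ad}$-type data and in fact $\mu_{A_\infty}$ is a fixed point of the bracket flow, i.e. $\pi(P_{\mu_{A_\infty}})\mu_{A_\infty} = 0$; this means $P_{\mu_{A_\infty}} \in \operatorname{Der}(\mu_{A_\infty})$, so it is an algebraic soliton with $\lambda = 0$ (take $D = 2P_{\mu_A}$, $\lambda = 0$ — one checks directly from $P_{\mu_A} = \operatorname{diag}(0,AA^*)$ and normality that $\pi(P_{\mu_A})\mu_A = 0$). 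Since the soliton in question, if it exists on $(G_A,J_A)$, has the same spectrum data as $A$ and hence is also non-nilpotent, the same computation gives $\lambda = 0$ for it directly: from $[D_V, A] = aA$ and $a\tr(A) = 0$ with $\tr(A)\ne 0$ we get $a = 0$, then a short induction on a Jordan-type filtration of $A$ using $[D_V,A]=0$ shows $\tr(D_V)$ is forced to a value making $\tr(D) = \|A\|^2$, hence $\lambda = 0$.

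The main obstacle I anticipate is the non-nilpotent trace computation: showing cleanly that $\tr(D) = \|A\|^2$ (equivalently $\lambda = 0$) requires controlling $\tr(D_V)$ via the derivation relation, and the off-diagonal $*$-block of $D$ together with the inner-derivation ambiguity must be handled carefully so that it genuinely drops out of all traces. The cleanest route is probably to bypass $D$ entirely in the non-nilpotent case by invoking the classification result (Proposition \ref{prop:nonNilpAASolIsAlg}) that the soliton must be the algebraic one built from a normal operator, for which $\lambda = 0$ is immediate; but since that proposition comes later, within this lemma I would instead give the self-contained trace argument above, accepting a modest amount of bookkeeping with the block structure of derivations of $\mu_A$.
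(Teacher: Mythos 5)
Your approach is genuinely different from the paper's, which avoids derivations entirely: for a soliton, $g_t=(1-\lambda t)\phi_t^*g$; long-time existence (Corollary \ref{cor:AconvToNormMat}) forces $\lambda\le 0$; and since scaling the metric inversely scales the bracket, $1-\lambda t\to\infty$ if and only if $A_t\to 0$, which by Proposition \ref{prop:convTo0} happens if and only if $A$ is nilpotent. That two-line dynamical argument applies to an arbitrary left-invariant soliton. Your trace argument, by contrast, starts from $P_{\mu_A}=\lambda\id+\frac12(D+D^*)$ with $D$ a derivation, i.e.\ it assumes the soliton is semi-algebraic. For nilpotent $\mu_A$ this is Theorem \ref{thm:nilpSolisAlg}, but for non-nilpotent almost-abelian groups semi-algebraicity is only established later (Proposition \ref{prop:nonNilpAASolIsAlg}), and that proposition itself invokes the present lemma --- so as organised, the non-nilpotent half of your argument is circular, or at best proves a weaker statement about algebraic solitons only.

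Beyond that structural issue there are three concrete gaps. (i) The claim that every derivation of $\mu_A$ preserves $V$ is false in general: when $\operatorname{rank}A\le 1$ the codimension-one abelian ideal is not unique, and e.g.\ for the complex Heisenberg algebra ($\mu(Z_0,Z_1)=Z_2$) the map $Z_1\mapsto Z_0$, $Z_0,Z_2\mapsto 0$ is a derivation not preserving $V$; the fact that $D$ preserves $[\f g,\f g]$ constrains $D[\f g,\f g]$, not $DV$. Hence the block form of $D$ and the relation $[D_V,A]=aA$ need a separate argument. (ii) In the non-nilpotent case you deduce $a=0$ from $a\tr(A)=0$ ``with $\tr(A)\neq 0$'', but non-nilpotent matrices can be traceless; you would need $[D_V,A^k]=kaA^k$ together with $\tr(A^k)\neq 0$ for some $k$. (Once $a=0$ is known, the $(Z_0,Z_0)$-entry of the soliton equation gives $\lambda=-\operatorname{Re}(a)=0$ at once, so the unexplained ``induction on a Jordan-type filtration'' showing $\tr(D)=\|A\|^2$ is both unnecessary and not a proof as sketched.) (iii) In the nilpotent case you rule out $\lambda=0$ by citing the proposition that solvable groups admit no static metrics, but static ($\Theta=\lambda g$, i.e.\ $D=0$) is strictly stronger than steady; the correct contradiction is that a steady algebraic soliton is a fixed point of the bracket flow, incompatible with the decay $\|\mu_t\|^2\le Ct^{-1}$ of Theorem \ref{thm:BFNilpGaugedGrowth} for $\mu\neq 0$. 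Each gap is repairable, but the proof as written does not go through.
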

	\begin{proof}
		Let $g_t$ be the solution to \ref{eqn:HCF} with $g_0 = g$. Then $g_t$ is given by
		\[
		g(t) = (1-\lambda t)\phi_t^*g,
		\]
		for some family of biholomorphisms $\phi_t$. The solution exists for all times by Corollary \ref{cor:AconvToNormMat}, and so $\lambda \leq 0$. Now $\lambda < 0$ if and only if $1-\lambda t \to \infty$. Moreover in the varying brackets setting, scaling a metric by $c>0$ corresponds to scaling the associated bracket by $c^{-2}$ \cite[\S 2.1]{lauretRicFlowHomMfd}. Thus, $1-\lambda t \to \infty$ if and only if $A_t \to 0$, where $A_t$ solves (\ref{eqn:aabf}) with $A_0 = A$. This in turn is true if and only if $A$ is nilpotent by Proposition \ref{prop:convTo0}.
	\end{proof}
	\subsection{Classification of algebraic solitons}
	We now study the existence and uniqueness of left-invariant $\HCFp$ solitons on almost-abelian Lie groups. Let $A \in \f{gl}_n(\C)$. By Corollary \ref{cor:muArel} and Lemma \ref{lem:aaOrbits}, studying the existence of algebraic solitons on the Lie algebra $\mu_A$ amounts to looking at the existence (up to scaling) of matrices $B$ in the orbit $\textrm{GL}_{n}(\C)\cdot A$ satisfying
	\begin{equation}
	\label{eqn:aaalgSolEqn}
	B[B,B^*] = \lambda B,
	\end{equation}
	for some $\lambda \in \R$. Here the action of $\textrm{GL}_n(\C)$ on $\f{gl}_n(\C)$ is given by conjugation. That is, $h \cdot A = hAh^{-1}$ for $h \in \textrm{GL}_n(\C)$ and $A \in \f{gl}_n(\C)$.
	
	If $B$ solves (\ref{eqn:aaalgSolEqn}), then $B' = \alpha h\cdot B$ solves
	\[
	B'[B',B'] = \alpha^2 \lambda B',
	\]
	for any $\alpha \in \mathbb{R}$ and $h \in \textrm{U}(n)$. Thus, by Lemma \ref{lem:aaSignOfLambda} we may always assume $\lambda = 0$ when $B$ is non-nilpotent, and $\lambda = -1$ when $B$ is nilpotent. We will prove uniqueness of $B$ up to unitary transformation and scaling. By Lemma \ref{lem:aaOrbits}, this will be equivalent to uniqueness of algebraic solitons up to homotheties.
	
	Suppose $B\in \f {gl}_n(\C)$ solves, 
	\begin{equation}
	\label{eqn:aaNilpSolEqn}
	B[B,B^*] = -B, \qquad B^k \neq  0, \qquad B^{k+1} = 0,
	\end{equation}
	for $B \in \f{gl}_n(\C)$ and $k \geq 1$. That is, $B$ is $(k+1)-$step nilpotent and solves (\ref{eqn:aaalgSolEqn}) for $\lambda = -1$. 
	
	Since $B$ is nilpotent, it has a non-trivial kernel. We may decompose $\C^n = \ker B \oplus^\perp V_{\geq 1}$. Then, under this splitting,
	\[
	B = \begin{pmatrix}
	0&B_1\\
	0&B_2
	\end{pmatrix},
	\]
	where $B_1 = \Pr_{\ker B}B|_{V_{\geq 1}}$ and $B_2 = \Pr_{V_{\geq 1}}B|_{V_{\geq 1}} \in \f{gl}(V_{\geq 1},\C)$. By construction, $\ker B_1 \cap \ker B_2 = \{0\}$ and $B_2$ is $k$-step nilpotent. Equation (\ref{eqn:aaNilpSolEqn}) can now be observed under this splitting.
	\begin{lem}
		\label{lem:aaSolInitDecomp}
		$B = \begin{pmatrix}
		0&B_1\\
		0&B_2
		\end{pmatrix}$ solves (\ref{eqn:aaNilpSolEqn}) if and only if
		\[
		B_1B_2^* = 0 \qquad \text{and}\qquad B_1^*B_1 - [B_2,B_2^*] = \operatorname{Id}.
		\]
		In this case, $B_2[B_2,B_2^*] = -B_2$.
	\end{lem}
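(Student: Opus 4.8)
The plan is to write everything in block form with respect to the orthogonal splitting $\C^n = \ker B \oplus^\perp V_{\geq 1}$ and compare entries. First I would record that $B^* = \begin{pmatrix} 0 & 0 \\ B_1^* & B_2^* \end{pmatrix}$, whence
\[
BB^* = \begin{pmatrix} B_1B_1^* & B_1B_2^* \\ B_2B_1^* & B_2B_2^* \end{pmatrix}, \qquad B^*B = \begin{pmatrix} 0 & 0 \\ 0 & B_1^*B_1 + B_2^*B_2 \end{pmatrix},
\]
so that
\[
[B,B^*] = \begin{pmatrix} B_1B_1^* & B_1B_2^* \\ B_2B_1^* & [B_2,B_2^*] - B_1^*B_1 \end{pmatrix}.
\]
Multiplying on the left by $B$ and comparing with $-B = \begin{pmatrix} 0 & -B_1 \\ 0 & -B_2 \end{pmatrix}$ yields four identities: from the first column, $B_1B_2B_1^* = 0$ and $B_2^2B_1^* = 0$; from the second column, $B_1\big([B_2,B_2^*] - B_1^*B_1\big) = -B_1$ and $B_2\big([B_2,B_2^*] - B_1^*B_1\big) = -B_2$. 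The lemma will be extracted from these.

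For the ($\Leftarrow$) direction: if $B_1B_2^* = 0$ (equivalently $B_2B_1^* = 0$) and $B_1^*B_1 - [B_2,B_2^*] = \id$, then all four identities hold by inspection --- the first-column ones because the factor $B_2B_1^*$ vanishes, the second-column ones because $[B_2,B_2^*] - B_1^*B_1 = -\id$. The same two relations also give $B_2[B_2,B_2^*] = B_2\big(B_1^*B_1 - \id\big) = -B_2$, using $B_2B_1^*B_1 = (B_2B_1^*)B_1 = 0$; this is the last assertion of the lemma.

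For the ($\Rightarrow$) direction, the starting observation is that $\ker B_1 \cap \ker B_2 = \{0\}$: a vector $v \in V_{\geq 1}$ with $B_1v = B_2v = 0$ lies in $\ker B$, hence in $\ker B \cap (\ker B)^\perp = \{0\}$. Setting $C := B_1^*B_1 - [B_2,B_2^*] - \id$, the two second-column identities read $B_1C = 0$ and $B_2C = 0$, so $\operatorname{im} C \subset \ker B_1 \cap \ker B_2 = \{0\}$, i.e. $C = 0$; this is the relation $B_1^*B_1 - [B_2,B_2^*] = \id$. It remains to deduce $B_1B_2^* = 0$, which I expect to be the one slightly delicate point. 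Put $Y := B_2B_1^*$. The identity $B_2^2B_1^* = 0$ gives $B_2Y = 0$ (and hence $B_2^*B_2Y = 0$); left-multiplying $B_1B_2B_1^* = 0$ by $B_1^*$ and substituting $B_1^*B_1 = \id + [B_2,B_2^*]$ gives $(\id + [B_2,B_2^*])Y = 0$, which combined with $B_2^*B_2Y = 0$ reduces to $Y + B_2B_2^*Y = 0$. Left-multiplying this by $B_2^*$ gives $(\id + B_2^*B_2)(B_2^*Y) = 0$, and since $\id + B_2^*B_2$ is positive definite, hence invertible, we get $B_2^*Y = 0$; feeding this back into $Y = -B_2(B_2^*Y)$ gives $Y = 0$, i.e. $B_1B_2^* = (B_2B_1^*)^* = 0$.

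The only genuine obstacle is this last chain: the relation $B_1^*B_1 - [B_2,B_2^*] = \id$ falls out immediately from the kernel argument, but the vanishing $B_1B_2^* = 0$ requires combining both first-column identities and then invoking invertibility of $\id + B_2^*B_2$ to strip off the kernel. Everything else is routine manipulation of $2\times2$ block matrices, and once both relations are established, $B_2[B_2,B_2^*] = -B_2$ follows exactly as in the ($\Leftarrow$) direction.
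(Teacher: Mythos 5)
Your proof is correct and follows the paper's approach: the same block computation of $B + B[B,B^*]$ and the same key fact that $\ker B_1 \cap \ker B_2 = \{0\}$. The only difference is that the step you flag as "slightly delicate" is actually immediate by the very same kernel argument you use for the second column: the first-column identities read $B_1(B_2B_1^*) = 0$ and $B_2(B_2B_1^*) = 0$, so $\operatorname{im}(B_2B_1^*) \subset \ker B_1 \cap \ker B_2 = \{0\}$ yields $B_2B_1^* = 0$ directly, with no need for the positive-definiteness of $\operatorname{Id} + B_2^*B_2$.
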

	\begin{proof}
		By a direct computation using (\ref{eqn:aaNilpSolEqn}) we get
		\[
		0 = B + B[B,B^*] = \begin{pmatrix}
		B_1B_2B_1^*& B_1([B_2,B_2^*]-B_1^*B_1 + \operatorname{Id})\\
		B_2B_2B_1^*& B_2([B_2,B_2^*]-B_1^*B_1 + \operatorname{Id})
		\end{pmatrix}.
		\]
		The result follows immediately from the fact that $\ker B_1$ and $\ker B_2$ intersect trivially.
	\end{proof}
	
	Define $V_1 := \ker B_{2}$ and $V_{\geq 2} := \ker B_1$. As a consequence of Lemma \ref{lem:aaSolInitDecomp} we obtain 
	\begin{lem}
		\label{lem:aaSolInitDecomp2}
		$V_{\geq 1} = V_1 \oplus^{\perp} V_{\geq 2}$.
	\end{lem}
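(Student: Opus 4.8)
The plan is to work entirely inside the finite-dimensional inner product space $V_{\geq 1}$ and to extract everything from the single relation $B_1B_2^* = 0$ furnished by Lemma \ref{lem:aaSolInitDecomp}; the second relation $B_1^*B_1 - [B_2,B_2^*] = \operatorname{Id}$ will not be needed. Recall that $V_1 = \ker B_2$ and $V_{\geq 2} = \ker B_1$ are subspaces of $V_{\geq 1}$ with $V_1 \cap V_{\geq 2} = \ker B_2 \cap \ker B_1 = \{0\}$ by construction, and that $B_1 \colon V_{\geq 1} \to \ker B$ while $B_2 \in \f{gl}(V_{\geq 1})$, so all orthogonal complements below are taken inside $V_{\geq 1}$. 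It therefore suffices to prove $V_1 = V_{\geq 2}^\perp$, since then $V_{\geq 1} = V_{\geq 2} \oplus^\perp V_{\geq 2}^\perp = V_1 \oplus^\perp V_{\geq 2}$.

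First I would take adjoints in $B_1 B_2^* = 0$ to get $B_2 B_1^* = 0$, which says $\operatorname{im} B_1^* \subseteq \ker B_2 = V_1$. Since $V_{\geq 2} = \ker B_1$, its orthogonal complement in $V_{\geq 1}$ is exactly $\operatorname{im} B_1^*$ (the standard identity $(\ker T)^\perp = \operatorname{im} T^*$), so we obtain the inclusion $V_{\geq 2}^\perp \subseteq V_1$.

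For the reverse inclusion, take any $v \in V_1$ and decompose $v = v' + v''$ with respect to the orthogonal splitting $V_{\geq 1} = V_{\geq 2} \oplus^\perp V_{\geq 2}^\perp$. By the inclusion just established, $v'' \in V_{\geq 2}^\perp \subseteq V_1$, hence $v' = v - v'' \in V_1$; but also $v' \in V_{\geq 2}$, and $V_1 \cap V_{\geq 2} = \{0\}$ forces $v' = 0$. Thus $v = v'' \in V_{\geq 2}^\perp$, giving $V_1 \subseteq V_{\geq 2}^\perp$ and therefore $V_1 = V_{\geq 2}^\perp$, which is what we wanted. I do not expect a genuine obstacle here: the statement is a short linear-algebra consequence of the orthogonality built into $B_1B_2^* = 0$ together with the trivial-intersection fact, and the only point requiring care is bookkeeping of the domains and codomains of $B_1, B_2$ and their adjoints so that the orthogonal complements are consistently taken inside $V_{\geq 1}$.
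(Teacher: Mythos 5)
Your proof is correct and takes essentially the same approach as the paper's: both extract from $B_1B_2^* = 0$ that one of $V_1$, $V_{\geq 2}$ contains the orthogonal complement (in $V_{\geq 1}$) of the other — you establish $V_{\geq 2}^\perp = \operatorname{im}B_1^* \subseteq V_1$ via the adjoint relation $B_2B_1^* = 0$, while the paper establishes the equivalent inclusion $V_1^\perp = \operatorname{im}B_2^* \subseteq V_{\geq 2}$ directly — and both then conclude using the trivial intersection $V_1 \cap V_{\geq 2} = \{0\}$. The only cosmetic difference is that you finish by decomposing an arbitrary element of $V_1$, whereas the paper finishes by counting dimensions.
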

	\begin{proof}
		Since $B_1B_2^* = 0$ we have
		\[
		V_1^\perp=(\ker B_2)^\perp = B_2^*V_{\geq 1} \subset \ker B_1 = V_{\geq 2}.
		\]
		Thus 
		\[
		\dim V_1 + \dim V_{\geq 2} = \dim V_{\geq 1} - \dim V_1^{\perp} + \dim V_2 \geq \dim V_{\geq 1}.
		\] 
		The result again follows from the trivial intersection of $V_1$ and $V_{\geq 2}$ and counting dimensions.
	\end{proof}
	Now, under the decomposition $\C^n = \ker B \oplus^\perp V_1 \oplus^\perp V_{\geq 2}$ we get
	\[
	B = \begin{pmatrix}
	0&E_1&0\\
	0&0&B'_1\\
	0&0&B'_2
	\end{pmatrix},
	\]
	where $E_1 := \Pr_{\ker B}B|_{V_1}$, $B'_1 := \Pr_{V_1} B|_{V_{\geq 2}}$, and $B'_2 := \Pr_{V_{\geq 2}}B|_{V_{\geq 2}}$. Note that $E_1$ is injective. Now
	\[
	B_2 = \begin{pmatrix}
			0&B'_1\\
			0&B'_2
		\end{pmatrix}
	\]
	solves (\ref{eqn:aaNilpSolEqn}) by Lemma \ref{lem:aaSolInitDecomp}. So we can proceed inductively applying the above Lemmas to obtain the following 
	
	\begin{prop}
	\label{prop:aaSolDecomp}
	Suppose $B \in \f{gl}_n(\C)$. Then $B$ solves (\ref{eqn:aaNilpSolEqn}) if and only if there exists an orthogonal decomposition
	\[
	\C^n = (V_0 := \ker B)\oplus^\perp V_1 \oplus^ \perp \dots \oplus^\perp V_k,
	\]
	so that
	\begin{equation}
	\label{eqn:BEiform}
	B = \begin{pmatrix}
	0&E_1&0&0&\dots &0\\
	0&0&E_2&0&\dots &0\\
	0&0&0&E_3&\dots&0\\
	\vdots&\vdots&\vdots&\vdots&\ddots&\vdots\\
	0&0&0&0&\dots&E_k\\
	0&0&0&0&\dots&0
	\end{pmatrix},
	\end{equation}
	where $E_i := \Pr_{V_{i-1}}B|_{V_i}$ solve the system
	\begin{equation}
	\label{eqn:aaSolEqnSystem}
	\begin{cases}
	E_i^*E_i - E_{i+1}E^*_{i+1} = \operatorname{Id}, \qquad &1\leq i < k, \\
	 E_k^*E_k = \operatorname{Id}, \qquad &i = k.
	\end{cases}
	\end{equation}
	Moreover, $\dim V_{i-1} - \dim V_{i}$ is the number  of $i\times i$ Jordan blocks appearing in the Jordan decomposition of $B$.
	\end{prop}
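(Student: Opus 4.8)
The plan is to establish the equivalence, together with the Jordan-block count, by induction on the nilpotency step of $B$, using Lemmas~\ref{lem:aaSolInitDecomp} and~\ref{lem:aaSolInitDecomp2} (which carry out the first inductive step) as the engine. Throughout I adopt the conventions $E_{k+1}:=0$ and $V_{k+1}:=0$.

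\emph{The easy direction.} Suppose $B$ has the block-shift form (\ref{eqn:BEiform}) with the $E_i$ satisfying (\ref{eqn:aaSolEqnSystem}). Reading off the blocks, $B$ maps $V_i$ into $V_{i-1}$ by $E_i$ and annihilates $V_0$; hence $B^*$ maps $V_{i-1}$ into $V_i$ by $E_i^*$ and annihilates $V_k$, so that $B^*B|_{V_i}=E_i^*E_i$ (and $B^*B|_{V_0}=0$) while $BB^*|_{V_i}=E_{i+1}E_{i+1}^*$. Thus $[B,B^*]$ restricts to $E_1E_1^*$ on $V_0$ and to $E_{i+1}E_{i+1}^*-E_i^*E_i=-\id$ on each $V_i$ with $1\le i\le k$, the last equality being exactly (\ref{eqn:aaSolEqnSystem}). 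Composing with $B$ gives $B[B,B^*]|_{V_i}=-E_i=-B|_{V_i}$ for $i\ge1$ and $B[B,B^*]|_{V_0}=0=-B|_{V_0}$, i.e. (\ref{eqn:aaalgSolEqn}) with $\lambda=-1$. Moreover (\ref{eqn:aaSolEqnSystem}) gives $E_i^*E_i=\id+E_{i+1}E_{i+1}^*\ge\id$, so each $E_i$ is injective; hence $B^{k}=E_1E_2\cdots E_k$ is injective on $V_k$ and nonzero (as $B^k\ne0$ forces $V_k\ne0$), whereas $B^{k+1}=0$ since $B$ lowers the index by one. Finally, injectivity of the $E_i$ gives $\ker B^j=V_0\oplus^\perp\cdots\oplus^\perp V_{j-1}$, so the number of Jordan blocks of size $\ge j$ equals $\dim\ker B^j-\dim\ker B^{j-1}=\dim V_{j-1}$, and subtracting consecutive values shows the number of $j\times j$ blocks is $\dim V_{j-1}-\dim V_j$.

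\emph{The hard direction.} We induct on the nilpotency step $k+1$. If $k=1$ then $B^2=0$; decomposing $\C^n=\ker B\oplus^\perp V_{\ge1}$ and applying Lemmas~\ref{lem:aaSolInitDecomp} and~\ref{lem:aaSolInitDecomp2} forces $B_2=0$, $B_1^*B_1=\id$, and $\ker B_1=0$, which is precisely the asserted form with $V_0=\ker B$, $V_1=V_{\ge1}$ and $E_1=B_1$. For the inductive step, let $B$ be $(k+1)$-step nilpotent solving (\ref{eqn:aaNilpSolEqn}) and write $B=\begin{pmatrix}0&B_1\\0&B_2\end{pmatrix}$ relative to $\C^n=\ker B\oplus^\perp V_{\ge1}$. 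By Lemma~\ref{lem:aaSolInitDecomp}, $B_2$ is a $k$-step nilpotent solution of (\ref{eqn:aaNilpSolEqn}) on $V_{\ge1}$, so the inductive hypothesis supplies an orthogonal splitting $V_{\ge1}=W_0\oplus^\perp\cdots\oplus^\perp W_{k-1}$ with $W_0=\ker B_2$, in which $B_2$ has block-shift form with blocks $F_1,\dots,F_{k-1}$ obeying (\ref{eqn:aaSolEqnSystem}). Set $V_0:=\ker B$, $V_1:=W_0$, and $V_i:=W_{i-1}$ for $2\le i\le k$. Lemma~\ref{lem:aaSolInitDecomp2} identifies $V_1$ with $\ker B_2$ and $\ker B_1$ with $W_1\oplus^\perp\cdots\oplus^\perp W_{k-1}$, so $B_1$ kills every $W_j$ with $j\ge1$ and restricts to an injective map $E_1:=B_1|_{W_0}\colon V_1\to V_0$; combined with the block-shift form of $B_2$ this puts $B$ into the form (\ref{eqn:BEiform}) with $E_i=F_{i-1}$ for $i\ge2$. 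The relations in (\ref{eqn:aaSolEqnSystem}) for $i\ge2$ are exactly those for the $F_j$, while the remaining relation $E_1^*E_1-E_2E_2^*=\id$ is obtained by restricting the identity $B_1^*B_1-[B_2,B_2^*]=\id$ of Lemma~\ref{lem:aaSolInitDecomp} to $W_0$, using that $B_1^*B_1|_{W_0}=E_1^*E_1$ and, from the block-shift form of $B_2$, that $[B_2,B_2^*]|_{W_0}=F_1F_1^*=E_2E_2^*$.

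The step I expect to be fussiest is the bookkeeping in the inductive step: one must check that the orthogonal splitting $V_{\ge1}=V_1\oplus^\perp V_{\ge2}$ returned by Lemma~\ref{lem:aaSolInitDecomp2} refines compatibly into the finer splitting $W_0\oplus^\perp\cdots\oplus^\perp W_{k-1}$ coming from the inductive hypothesis for $B_2$, and that expanding $B^*B$, $BB^*$, and $[B_2,B_2^*]$ in the block-shift basis reproduces precisely the index-shifted copies of (\ref{eqn:aaSolEqnSystem}) together with the new top relation. None of this is deep, but all of it is sensitive to the index conventions.
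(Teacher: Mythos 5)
Your proof is correct and follows essentially the same route as the paper: both establish the orthogonal decomposition and the block-shift form by applying Lemmas \ref{lem:aaSolInitDecomp} and \ref{lem:aaSolInitDecomp2} inductively, with only a cosmetic difference in how the relations (\ref{eqn:aaSolEqnSystem}) are extracted (you thread them through the induction, while the paper cancels the injective $E_i$ on the left in $B[B,B^*]=-B$ once the form is in hand). The only nit is the parenthetical ``$B^k\neq 0$ forces $V_k\neq 0$'' in the easy direction, which is stated backwards: there you need $V_k\neq 0$ (implicit in the decomposition having $k+1$ nontrivial summands) in order to conclude $B^k\neq 0$.
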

	\begin{proof}We start with the first direction. The decomposition and the form of $B$ follow from Lemmas \ref{lem:aaSolInitDecomp} and \ref{lem:aaSolInitDecomp2}  inductively. To see that (\ref{eqn:aaSolEqnSystem}) holds we simply use the fact that $B[B,B^*] = -B$ to get
	\[
	\begin{cases}
	E_i(E_i^*E_i - E_{i+1}E^*_{i+1}) = E_i, \qquad &1\leq i < k, \\
	E_kE_k^*E_k = E_k, \qquad &i = k.
	\end{cases}
	\]
	The maps $E_i$ are in injective by construction and so we may cancel on the left in the above relations to obtain (\ref{eqn:aaSolEqnSystem}).
	
	The converse follows by assuming $B$ is of the above form for operators $E_i$ solving (\ref{eqn:aaSolEqnSystem}) and verifying $B[B,B^*] = -B$.
	
	The only eigenvalue of $B$ is $0$. Thus, the number of Jordan blocks of size at least $i\times i$ is given by $\dim \ker B^{i} - \dim \ker B^{i-1} = \dim V_i$. So the number of Jordan blocks of size precisely $i\times i$ is given by $\dim V_{i-1} - \dim V_i$.
	\end{proof}
	\begin{prop}
	\label{prop:nilpSolCanForm}
	Suppose $B \in \f{gl}_n(\C)$ solves (\ref{eqn:aaNilpSolEqn}). 
	Let
	\[
	\C^n = V_0 \oplus V_1 \dots \oplus V_k,
	\]
	be the decomposition from Proposition \ref{prop:aaSolDecomp}.
	Then there exists an orthonormal basis of $\C^n$ subordinate to this splitting 
	in which $B$ is represented by the matrix
	\begin{equation}
	\label{eqn:nilpSolCanForm}
	B = \begin{pmatrix}
	0&\Sigma_1&0&0&\dots &0\\
	0&0&\Sigma_2&0&\dots &0\\
	0&0&0&\Sigma_3&\dots&0\\
	\vdots&\vdots&\vdots&\vdots&\ddots&\vdots\\
	0&0&0&0&\dots&\Sigma_k\\
	0&0&0&0&\dots&0
	\end{pmatrix},
	\end{equation}
	for matrices $\Sigma_i \in \C^{\dim V_{i-1} \times \dim V_i}, 1\leq i \leq k,$ whose only non-zero entries are positive real numbers on the main diagonal. Moreover, the entries $(\Sigma_i)_{jj} =: \sigma_{ij}, 1\leq i \leq k, 1\leq j \leq \dim{V_i}$ are given recursively by
	\begin{equation}
		\label{eqn:aaNilpSolEntries}
		\sigma_{ij} = \begin{cases}
		\sqrt{1 + (\sigma_{(i+1)j})^2}, \quad&i<k, 1\leq j \leq \dim V_{i+1},\\
		1, \quad &i < k, \dim V_{i+1} < j \leq \dim V_{i},\\
		1,\quad &i = k, 1\leq j \leq \dim V_k.
	\end{cases}
	\end{equation}
	In particular this matrix representation is uniquely determined by the numbers $\dim V_i$ and therefore depends only on the Jordan decomposition of $B$.
	\end{prop}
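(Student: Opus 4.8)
The plan is to work entirely within the decomposition $\C^n = V_0 \oplus^\perp V_1 \oplus^\perp \cdots \oplus^\perp V_k$ and the block-matrix form (\ref{eqn:BEiform}) already supplied by Proposition \ref{prop:aaSolDecomp}, and to show that the $E_i$ can be brought simultaneously to diagonal form by a block-diagonal unitary change of basis — i.e. a unitary change of orthonormal basis subordinate to the splitting. The key observation is a singular-value decomposition argument run from the top block downward. Starting with $E_k \colon V_k \to V_{k-1}$, the relation $E_k^*E_k = \operatorname{Id}$ says $E_k$ is an isometry onto its image, so $\dim V_k \leq \dim V_{k-1}$ and there is a singular value decomposition $E_k = U_{k-1}\Sigma_k U_k^*$ with $U_{k-1}\in \operatorname{U}(V_{k-1})$, $U_k \in \operatorname{U}(V_k)$, and $\Sigma_k$ a (rectangular) matrix with $\dim V_k$ ones on the diagonal and zeros elsewhere — so in fact $\sigma_{kj}=1$ for all $j$. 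Absorbing $U_{k-1}$ and $U_k$ into the chosen orthonormal bases of $V_{k-1}$ and $V_k$, I may assume $E_k = \Sigma_k$ is already of this form.

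Next I proceed inductively downward: suppose $E_{i+1}, \dots, E_k$ have been diagonalised with positive real diagonal entries $\sigma_{(i+1)j}, \dots, \sigma_{kj}$. The relation $E_i^*E_i = \operatorname{Id} + E_{i+1}E_{i+1}^*$ shows $E_i^*E_i$ is a fixed positive-definite operator on $V_i$; because $E_{i+1}$ is already diagonal (in the bases fixed so far), $E_{i+1}E_{i+1}^*$ is diagonal on $V_i$ with entries $(\sigma_{(i+1)j})^2$ for $1\le j\le \dim V_{i+1}$ and $0$ for $\dim V_{i+1} < j \le \dim V_i$. Hence $E_i^*E_i$ is already diagonal in the current basis of $V_i$, with eigenvalues exactly $1 + (\sigma_{(i+1)j})^2$ and $1$ respectively. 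A subtle point: to diagonalise $E_i$ itself by a singular value decomposition I would normally be free to rotate the basis of $V_i$, but that basis is now \emph{pinned} by the previous step. However, since $E_i^*E_i$ is already diagonal, the singular value decomposition of $E_i$ only needs a unitary on the target side $V_{i-1}$: write $E_i = W \Lambda$ where $\Lambda = (E_i^*E_i)^{1/2}$ is the diagonal positive operator with entries $\sigma_{ij}$ as in (\ref{eqn:aaNilpSolEntries}), and $W = E_i \Lambda^{-1}$ (well-defined since $E_i$ is injective and $\Lambda$ invertible) is an isometry $V_i \to V_{i-1}$, which extends to a unitary $\tilde W$ on $V_{i-1}$. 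Absorbing $\tilde W^{-1}$ into the basis of $V_{i-1}$ — which has not yet been pinned, since the induction is descending — makes $E_i = \Sigma_i$ with the prescribed positive diagonal. This closes the induction and gives simultaneously the canonical form (\ref{eqn:nilpSolCanForm}) and the recursion (\ref{eqn:aaNilpSolEntries}).

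The main obstacle is precisely the bookkeeping of \emph{which} bases are still free to rotate at each stage: the descending induction must be set up so that when diagonalising $E_i$ the target space $V_{i-1}$ is still unconstrained while the source space $V_i$ is already fixed, and one must check that the already-fixed basis of $V_i$ is automatically compatible (i.e. that $E_i^*E_i$ is diagonal in it) — which is exactly what the soliton equation $E_i^*E_i - E_{i+1}E_{i+1}^*=\operatorname{Id}$ together with the prior diagonalisation of $E_{i+1}$ delivers. Once that is verified, uniqueness of the representation is immediate: the diagonal entries $\sigma_{ij}$ are forced by the recursion, which in turn is forced by the $E_i^*E_i$ being fixed operators determined by the dimensions $\dim V_0 \ge \dim V_1 \ge \cdots \ge \dim V_k$; and by Proposition \ref{prop:aaSolDecomp} these dimensions are determined by the Jordan block structure of $B$. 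Thus the matrix representation depends only on the Jordan form of $B$, as claimed.
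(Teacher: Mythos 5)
Your proof is correct and follows essentially the same route as the paper: both rest on the singular value decomposition of the blocks $E_i$ together with the compatibility relation $E_i^*E_i = \operatorname{Id} + E_{i+1}E_{i+1}^*$, which forces the eigenbases on each $V_i$ to match up. Your descending induction with the polar decomposition $E_i = W\Lambda$ is just a cleaner repackaging of the paper's simultaneous choice of SVDs $E_i = P_i\Sigma_i Q_i^*$ with $P_{i+1}=Q_i$, and the recursion for the $\sigma_{ij}$ is extracted identically.
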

	
	\begin{proof}
	Suppose $B$ takes the form (\ref{eqn:BEiform}) from Proposition \ref{prop:aaSolDecomp} for operators $E_i \colon V_{i-1} \to V_i$ satisfying (\ref{eqn:aaSolEqnSystem}). Choosing an orthonormal basis for each space $V_i$, we consider the singular value decomposition of $E_i$ as a matrix.
	\[
	E_i = P_i\Sigma_i Q_i^*,
	\]
	where $P_i$ and $Q_i$ are unitary matrices whose columns consist of the eigenvectors of $E_iE_i^*$ and $E_i^*E_i$ respectively, and $\Sigma_i$ is a matrix whose only non-zero entries are positive real numbers on the main diagonal. By equation (\ref{eqn:aaSolEqnSystem}), $E_i^*E_i$ and $E_{i+1}E_{i+1}^*$ share the same eigenvectors, and so we may take $P_{i+1} = Q_i$. Then the matrix
	\[
	U := P_1 \oplus Q_1 \oplus Q_2 \oplus \dots \oplus Q_k,
	\]
	is unitary. Moreover, a simple computation using $P_{i+1} = Q_i$ shows that
	\[
	U^*BU = \begin{pmatrix}
	0&\Sigma_1&0&0&\dots &0\\
	0&0&\Sigma_2&0&\dots &0\\
	0&0&0&\Sigma_3&\dots&0\\
	\vdots&\vdots&\vdots&\vdots&\ddots&\vdots\\
	0&0&0&0&\dots&\Sigma_k\\
	0&0&0&0&\dots&0
	\end{pmatrix}.
	\]
	Thus, the first part of the proposition follows. Now by (\ref{eqn:aaSolEqnSystem}), the matrices $\Sigma_i \in \C^{\dim V_{i-1} \times \dim V_i}$ solve
	\[
	\begin{cases}
	\Sigma_i^*\Sigma_i - \Sigma_{i+1}\Sigma^*_{i+1} = \operatorname{Id}, \qquad &1\leq i < k, \\
	\Sigma_k^*\Sigma_k = \operatorname{Id}, \qquad &i = k.
	\end{cases}
	\]
	Equation (\ref{eqn:aaNilpSolEntries}) easily follows from the above relations.
	\end{proof}
	\begin{rmk}
		It follows that $\dim V_i \geq \dim V_{i+1}$ for all $0\leq i \leq k$ as each matrix $\Sigma_i$ is injective.
	\end{rmk}

	\begin{prop}
		\label{prop:aaNilpSolUniqueness}
		Suppose $B \in \f{gl}_n(\C)$ is $k+1$-step nilpotent and  solves (\ref{eqn:aaNilpSolEqn}). If $B' \in \operatorname{GL}_n(\C)\cdot B$ also solves (\ref{eqn:aaNilpSolEqn}), then $B' \in \operatorname{U}(n) \cdot B$.
	\end{prop}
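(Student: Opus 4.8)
The plan is to reduce everything to the canonical form furnished by Proposition \ref{prop:nilpSolCanForm}. First I would observe that since $B' \in \operatorname{GL}_n(\C)\cdot B$ (conjugation orbit), the operators $B$ and $B'$ are similar, hence have the same Jordan decomposition; in particular $B'$ is automatically $(k+1)$-step nilpotent, and as a solution of (\ref{eqn:aaNilpSolEqn}) it satisfies the hypotheses of Propositions \ref{prop:aaSolDecomp} and \ref{prop:nilpSolCanForm} as well.

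Next, I would apply Proposition \ref{prop:aaSolDecomp} to both $B$ and $B'$, obtaining orthogonal decompositions $\C^n = V_0\oplus^\perp\cdots\oplus^\perp V_k$ and $\C^n = V_0'\oplus^\perp\cdots\oplus^\perp V_k'$ together with the block forms (\ref{eqn:BEiform}). By the last assertion of that proposition, $\dim V_{i-1}-\dim V_i$ (respectively $\dim V_{i-1}'-\dim V_i'$) is the number of $i\times i$ Jordan blocks of $B$ (respectively of $B'$). Since $B$ and $B'$ have the same Jordan type, this forces $\dim V_i = \dim V_i'$ for every $i$ (equivalently, $\dim\ker B^{\,i} = \dim\ker (B')^{\,i}$ for all $i$, the standard complete similarity invariant of a nilpotent operator).

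Finally, I would invoke Proposition \ref{prop:nilpSolCanForm}: the canonical matrix (\ref{eqn:nilpSolCanForm}), whose diagonal entries are prescribed by the recursion (\ref{eqn:aaNilpSolEntries}), depends \emph{only} on the integers $\dim V_i$. Hence there is a single matrix $\Sigma$ and unitaries $U, U' \in \operatorname{U}(n)$ with $U^*BU = \Sigma = (U')^*B'U'$. Therefore $B' = U'\Sigma (U')^* = (U'U^*)\,B\,(U'U^*)^{-1}$ with $U'U^* \in \operatorname{U}(n)$, i.e. $B' \in \operatorname{U}(n)\cdot B$, which is the claim.

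The argument is essentially bookkeeping once Propositions \ref{prop:aaSolDecomp} and \ref{prop:nilpSolCanForm} are available; the only point requiring a moment's care is that the combinatorial data $(\dim V_i)_i$ really is a complete conjugation invariant of $B$ among nilpotent operators, which is exactly the dimension count recorded in the proof of Proposition \ref{prop:aaSolDecomp}. I do not expect a genuine obstacle here.
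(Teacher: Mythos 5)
Your proposal is correct and follows the same route as the paper: both arguments note that similarity forces the same Jordan type, hence the same dimensions $\dim V_i$, and then use the canonical form of Proposition \ref{prop:nilpSolCanForm} to conjugate $B$ and $B'$ by unitaries to one and the same matrix. Your write-up merely makes explicit the bookkeeping (that orthonormal-basis changes are unitary conjugations and that $(\dim V_i)_i$ is a complete invariant) that the paper leaves implicit.
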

	\begin{proof}
		If $B$ and $B'$ lie in the same $\textrm{GL}_n(\C)$ orbit and solve (\ref{eqn:aaNilpSolEqn}), then they have the same Jordan decomposition and thus Proposition \ref{prop:nilpSolCanForm} implies that we can chose two orthonormal bases in which $B$ and $B'$ take the form (\ref{eqn:nilpSolCanForm}). It follows that they must be unitarily similar.
	\end{proof}

	\begin{prop}
	\label{prop:nonNilpAASolIsAlg}
	Suppose $g$ is a left-invariant $\HCFp$-soliton on a non-nilpotent, almost abelian complex Lie group $G$ with bracket $\mu_A$. Then $A$ is normal with respect to $g$. In particular, $g$ is an algebraic soliton.
	\end{prop}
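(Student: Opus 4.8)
The plan is to exploit the monotonicity of $\|A_t\|^2$ along the bracket flow (Proposition~\ref{prop:normMono}) together with the fact that a soliton is a ``geometric fixed point'' of the $\HCFp$, which will force $\|A_t\|^2$ to be constant in $t$ and hence $A$ to be normal. I would not attempt a direct computation of $\mathcal{L}_Z g$; working on the bracket side is cleaner and fits the machinery of Section~\ref{sec:almostAbelian}.

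First, since $\mu_A$ is non-nilpotent the matrix $A$ is non-nilpotent, so Lemma~\ref{lem:aaSignOfLambda} gives $\lambda = 0$: the soliton is steady. Consequently the $\HCFp$ solution starting at $g$ is $g_t = \phi_t^\ast g$ for a family of biholomorphisms $\phi_t$ of $(G,J)$, so every $g_t$ is biholomorphically isometric to $g$. Because $\Theta$ is biholomorphism invariant and $g$ is left-invariant, the function $\operatorname{tr}_g \Theta(g)$ is a left-invariant (hence constant) function that is unchanged under such pull-backs; thus $\operatorname{tr}_{g_t}\Theta(g_t)$ is independent of $t$. Using the background-unitary normalisation and Proposition~\ref{prop:almostAbelianP}, this constant equals $\operatorname{tr} P_{\mu_A} = \operatorname{tr}(AA^\ast) = \|A\|^2$.

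Next I would transfer this to the bracket flow. By Theorem~\ref{thm:BracketFlowEquiv} and Corollary~\ref{cor:muArel}, the bracket flow starting at $\mu_A$ is $\mu_t = \mu_{A_t}$ with $\dot A_t = A_t[A_t,A_t^\ast]$, and the Hermitian Lie group attached to $(\f g, J, \mu_t, \langle\cdot,\cdot\rangle)$ is biholomorphically isometric to $(G,J,g_t)$. Comparing $\operatorname{tr}\Theta$ on both sides and using $P_{\mu_t} = \bigl(\begin{smallmatrix}0&0\\ 0&A_tA_t^\ast\end{smallmatrix}\bigr)$, we obtain $\|A_t\|^2 = \operatorname{tr} P_{\mu_t} = \operatorname{tr}_{g_t}\Theta(g_t) \equiv \|A\|^2$ for all $t \geq 0$. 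Since $\|A_t\|^2$ is thus constant, Proposition~\ref{prop:normMono} (strict decrease unless $A_t$ is normal) forces $A_t$ to be normal for every $t$, and in particular $A = A_0$ is $g$-normal.

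For the final assertion, I would check directly that a normal $A$ makes $P_{\mu_A}$ a $g$-self-adjoint derivation: $P_{\mu_A} = \bigl(\begin{smallmatrix}0&0\\ 0&AA^\ast\end{smallmatrix}\bigr)$ is self-adjoint and commutes with $J$, and $\pi(P_{\mu_A})\mu_A = 0$ because on $\mu_A(Z_0,Z_i) = (0\oplus A)Z_i$ the derivation identity reduces to $A A^\ast A = A A A^\ast$ (true by normality) and on $\mu_A(Z_i,Z_j) = 0$ it is immediate, as $P_{\mu_A}$ preserves the abelian ideal. Hence $\mu_A$ is an algebraic soliton bracket with $\lambda = 0$, equivalently $g$ is an algebraic $\HCFp$-soliton. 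The only genuinely delicate point is the bookkeeping in the second paragraph: one needs both $\lambda = 0$ (so that there is no time-dependent rescaling contaminating $\operatorname{tr}\Theta$) and the homogeneity of $g$ (so that the uncontrolled basepoint behaviour of the $\phi_t$ is irrelevant for the scalar invariant); once these are pinned down, Proposition~\ref{prop:normMono} does the rest.
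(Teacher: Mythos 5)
Your proposal is correct and follows essentially the same route as the paper: use Lemma \ref{lem:aaSignOfLambda} to get $\lambda=0$, deduce that $\operatorname{tr}_{g_t}\Theta(g_t)=\|A_t\|^2$ is constant by biholomorphism invariance, and then invoke the rigidity in Proposition \ref{prop:normMono} to conclude $A$ is normal. Your explicit check that normality gives $AA^*A=AAA^*$ and hence that $P_{\mu_A}$ is a self-adjoint derivation is a detail the paper leaves implicit, and it is verified correctly.
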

	\begin{proof}
		By Lemma \ref{lem:aaSignOfLambda}, $g$ must be a steady soliton. Thus, the solution $g_t$ to (\ref{eqn:HCF}) with $g_0 = g$ evolves only by pullback of biholomorphisms. Let $A_t$ solve (\ref{eqn:aabf}) with $A_0 = A$. By biholomorphism invariance of $\Theta$, we have that $\|A_t\|^2 = \operatorname{tr}(P_{\mu_{A_t}})=\operatorname{tr}_{g_t} \Theta_{\mu_{A}}(g_t)$ is constant. Proposition \ref{prop:normMono} implies $A$ is a normal matrix and therefore $\mu_A$ is an algebraic soliton bracket.
	\end{proof}
	We can now prove Theorem \ref{thm:solitonClass}
	\begin{proof}[Proof of Theorem \ref{thm:solitonClass}]
		We first note that all left-invariant solitons on a complex almost-abelian Lie group must be algebraic by Theorem \ref{thm:nilpSolisAlg} and Proposition \ref{prop:nonNilpAASolIsAlg}. Thus, it suffices to prove the theorem for algebraic solitons. Suppose $\mu_A$ is an almost-abelian bracket on $\C Z_0 \oplus \C^n$. 

		First, assume that $A$ is non-nilpotent. In this case, suppose $\mu_{A_1},\mu_{A_2} \in \operatorname{GL}_{n+1}(\C)\cdot \mu_A$ are both soliton brackets. Then by Proposition \ref{prop:nonNilpAASolIsAlg}, $A_1$ and $A_2$ are both normal. Thus, by Lemma \ref{lem:aaOrbits}, $A_1 = c k^*A_2k$ for some $c \in \C \setminus\{0\}$ and $k \in \operatorname{U}(n)$. Again by Lemma \ref{lem:aaOrbits}, $\mu_{\frac{1}{|c|}A_1} \in \operatorname{U}(n+1) \cdot \mu_{A_2}$. It follows that the solitons $(G_{\mu_{A_1}},J_{\mu_{A_1}},g_{\mu_{A_1}})$ and $(G_{\mu_{A_2}},J_{\mu_{A_2}},g_{\mu_{A_2}})$ must be equivariantly homothetic. The Lie group corresponding to the bracket $\mu_A$ admits a soliton if and only if there exists an almost-abelian bracket $\mu_{A'} \in \operatorname{GL}_{n+1}(\C) \cdot \mu_A$ such that $A'$ is normal (with respect to the background metric on $\C Z_0 \oplus \C^n)$. This in turn is true if and only if $\operatorname{ad}_{Z_0} = 0\oplus A$ is semi-simple by Lemma \ref{lem:aaOrbits}.

		Now assume $A$ is nilpotent. If $\mu_{A_1},\mu_{A_2} \in \operatorname{GL}_{n+1}\cdot \mu_A$ are soliton brackets, then $A_1,A_2$ are, up to scaling, similar to the same matrix by Proposition \ref{prop:nilpSolCanForm}. Uniqueness then follows from Proposition \ref{prop:aaNilpSolUniqueness}. For existence, it is clear from Proposition \ref{prop:nilpSolCanForm} that we can construct a matrix $A' \in \C^{n\times n}$ of the form (\ref{eqn:nilpSolCanForm}) with the same Jordan canonical form as $A$. Then $\mu_{A'}$ is a soliton bracket which lies in the orbit $\operatorname{GL}_{n+1}(\C) \cdot \mu_A$.
	\end{proof}

	\begin{rmk}
		\label{rmk:lattices}
		We note that almost-abelian, nilpotent Lie groups (which we have shown admit unique left-invariant $\HCFp$-solitons) all admit co-compact lattices. Indeed if $\mu_A$ denotes the corresponding Lie bracket, then the Jordan basis for $A$ has entries only in $\{0,1\}\subset \mathbb{Q}$. Thus, by Malcev's theorem \cite[Theorem 2.12]{RaghunDiscreteSubgroups}, it must have a co-compact lattice.
	\end{rmk}	
	\bibliography{Bibliography}
	\bibliographystyle{plain}
\end{document}